\let\mathcal\mathscr
\newtheorem{theorem}{Theorem}[section]
\newtheorem{lemma}[theorem]{Lemma}
\newtheorem{proposition}[theorem]{Proposition}
\newtheorem{corollary}[theorem]{Corollary}
\newtheorem{conjecture}[theorem]{Conjecture}
\theoremstyle{definition}
\newtheorem{definition}[theorem]{Definition}
\newtheorem{example}[theorem]{Example}
\newtheorem{question}[theorem]{Question}
\theoremstyle{remark}
\newtheorem{remark}[theorem]{Remark}
\numberwithin{equation}{section}
\newcommand{\Oseen}{\mathcal{O}}
\newcommand{\IZ}{\mathbb{Z}}
\newcommand{\IQ}{\mathbb{Q}}
\newcommand{\ord}{\text{\rm ord}}
\newcommand{\B}{\mathscr{B}}
\newcommand{\A}{\mathscr{A}}
\newcommand{\DK}{D_K}
\newcommand{\del}{\eta(K/F)}
\newcommand{\ka}{\kappa}
\newcommand{\ro}{\rho}
\newcommand{\E}{\mathcal{E}}
\newcommand{\Pro}{\mathbb{P}}
\newcommand{\Var}{\text{Var}}
\newcommand{\p}{\mathfrak{p}}
\newcommand{\q}{\mathfrak{q}}
\newcommand{\ef}{\mathfrak{e}}
\DeclareMathOperator{\Gal}{Gal}
\newtheorem{theorem}{Theorem}
\newtheorem{lemma}[theorem]{Lemma}
\newtheorem{proposition}[theorem]{Proposition}
\newtheorem{corollary}[theorem]{Corollary}
\theoremstyle{definition}
\numberwithin{theorem}{section}
\numberwithin{equation}{section}
\numberwithin{table}{section}
\newcommand\ZZ{\mathbb{Z}}
\newcommand\NN{\mathbb{N}}
\newcommand\QQ{\mathbb{Q}}
\newcommand\RR{\mathbb{R}}
\newcommand\CC{\mathbb{C}}
\newcommand{\OO}{\mathcal{O}}
\newcommand{\id}[1]{\mathfrak{#1}}
\newcommand{\ppp}{\id{p}}
\newcommand{\PPP}{\id{P}}
\newcommand{\ddd}{\id{d}}
\newcommand{\absnorm}{\mathfrak{N}}
\newcommand\card{\#}
\DeclareMathOperator\Hom{Hom}
\newcommand{\abs}[1]{\left|#1\right|}
\newcommand{\F}{\mathfrak{F}}
\newcommand{\gext}{G\text{-ext}}
\newcommand\Fbar{\overline{F}}
\newcommand{\npid}{l}
\DeclareMathOperator{\Cl}{Cl}
\DeclareMathOperator{\Res}{Res}
\newcommand{\idele}{\mathbf{A^\times}}
\DeclareMathOperator\Aut{Aut}
\newcommand{\totram}{\mathcal{T}_{F,n}}
\newcommand{\U}{\mathcal{U}}
\newcommand{\SSS}{\mathcal{F}}
\begin{document}

\title{Average bounds for the $\ell$-torsion in class groups of cyclic extensions}

\author{Christopher Frei}
\address{School of Mathematics\\
University of Manchester\\
Oxford Road, Manchester M13 9PL\\
UK}
\email{christopher.frei@manchester.ac.uk}

%    Information for second author
\author{Martin Widmer}
\address{Department of Mathematics\\ 
Royal Holloway, University of London\\ 
TW20 0EX Egham\\ 
UK}
\email{martin.widmer@rhul.ac.uk}

%    General info
\subjclass[2010]{Primary 11R29, 11N36, 11R45; Secondary 11G50}
%\date{\today, and in revised form ....}
\date{\today}

\dedicatory{Dedicated to Professor Robert F. Tichy on the occasion of his 60th birthday.}

\keywords{$\ell$-torsion, class group, number fields, small height}

\begin{abstract}
  For all positive integers $\ell$, we prove non-trivial bounds for the
  $\ell$-torsion in the class group of $K$, which hold for almost all
  number fields $K$ in certain families of cyclic extensions of
  arbitrarily large degree. In particular, such bounds hold for almost
  all cyclic degree-$p$-extensions of $F$, where $F$ is an arbitrary
  number field and $p$ is any prime for which $F$ and the $p$-th
  cyclotomic field are linearly disjoint. Along the way, we prove
  precise asymptotic counting results for the fields of bounded
  discriminant in our families with prescribed splitting behavior at
  finitely many primes.
\end{abstract}

\maketitle
\setcounter{tocdepth}{1}
\tableofcontents

\section{Introduction}\label{introductionchapter3}
Let $F$ be a number field with ring of integers $\OO_F$ and algebraic closure $\Fbar$. 
Let $n>1$ be an integer such that $F$ and $\QQ(\mu_n(\Fbar))$ are linearly
disjoint over $\QQ$, where $\mu_n(\Fbar)$ is the group of $n$-th roots of unity
in $\Fbar$. In this paper, we consider the family $\totram$ of Galois extensions $K\subseteq
\Fbar$ of $F$ with cyclic Galois group of order $n$ that satisfy the following
condition: 
\begin{quote}\label{familydef}
  every prime ideal of $\OO_F$ not dividing $n$ is either unramified or totally ramified in $K$.
\end{quote}
This condition is vacuous if $n$ is prime, so in this case
$\totram$ is the family of all cyclic degree-$n$-extensions of $F$. We
prove for each integer $\ell\geq 1$ an unconditional non-trivial upper
bound for the size of the $\ell$-torsion subgroup $Cl_K[\ell]$ of the
class group of $K$, which holds for all but a zero density set of
fields $K\in \totram$.  The case $n=2$ over the ground field $F=\IQ$
has been proven recently by Ellenberg, Pierce, and Wood \cite{EllenbergPierceWood}.

\subsection{Background}
We always assume $X\geq 2$, and that $\ell$ is a positive integer. We shall use
the $O(\cdot)$, $\ll$, and $\gg$ notation; unless explicitly stated otherwise,
the implied constants will depend only on the indicated parameters.
Denote the modulus of the discriminant of the number field $K$ by $\DK$, and
its degree $[K:\IQ]$ by $d$.

Bounding $\#Cl_K[\ell]$ by the size of the full class group, and using \cite[Thm 4.4]{Nark1980}  yields the trivial bound\footnote{As usual $\varepsilon$ denotes an arbitrarily small positive number.} 
\begin{alignat}1\label{trivial}
\#Cl_K[\ell] \ll_{d,\varepsilon} \DK^{1/2+\varepsilon},
\end{alignat}
valid for all number fields $K$ and positive integers $\ell$. A widely open conjecture (see, e.g., \cite[Conjecture 1.1]{EllVentorclass}, \cite[Section 3]{Duke} and \cite{Zhang05})  states that
\begin{alignat}1\label{epsilonconjecture}
\#Cl_K[\ell] \ll_{d,\ell,\varepsilon} \DK^{\varepsilon}.
\end{alignat}
For  $d=\ell=2$ the conjecture follows from Gauss' genus theory
but unconditional non-trivial bounds that hold for all number fields of degree $d$
are known only for $\ell=2$, and for $d\leq 4$ and $\ell=3$.
For $(d, \ell)=(2,3)$ the first non-trivial bounds were obtained by Pierce \cite{Pierce05, Pierce06}, and Helfgott and Venkatesh  \cite{HelfgottVenkatesh}. Currently 
the record is Ellenberg and Venkatesh's  \cite{EllVentorclass} bound
$$\#Cl_K[3] \ll_{\varepsilon} \DK^{1/3+\varepsilon}$$ 
which holds
also for cubic fields. They also established a non-trivial bound for quartic fields
(with, e.g., an exponent $83/168+\varepsilon$ provided $K$ is an $S_4$ or $A_4-$field).
More recently  Bhargava, Shankar, Taniguchi, Thorne, Tsimerman, and Zhao \cite{Bhargava2tor} showed for arbitrary $d$ the bound
$$\#Cl_K[2] \ll_{d,\varepsilon} \DK^{1/2-1/(2d)+\varepsilon},$$
and for $d\in \{3,4\}$ they can even take 
the exponent $0.2784$ (consequently giving the new world record upper bound for the number of $A_4$-extensions of bounded discriminant). Their method is entirely different and based on geometry of numbers, but unfortunately seems 
not to extend to any $\ell>2$. The most general non-trivial bound is due to Ellenberg and Venkatesh  \cite[Proposition 3.1]{EllVentorclass}
and states
\begin{alignat}1\label{GRHbound}
\#Cl_K[\ell] \ll_{d,\ell,\varepsilon} \DK^{1/2-1/(2\ell(d-1))+\varepsilon}.
\end{alignat}
This bound holds for all number fields $K$ and all $\ell$ but unfortunately is conditional on GRH. More generally, if $F$ is an arbitrary number field and $K/F$ is an extension of degree $n$ then, assuming GRH,  
\cite[Lemma 2.3]{EllVentorclass} provides the upper bound $\#Cl_K[\ell] \ll_{F,n,\ell,\varepsilon} \DK^{1/2-1/(2\ell(n-1))+\varepsilon}$;
in the sequel we shall refer to this as the ``GRH-bound''.

It is worthwhile mentioning that the Cohen-Lenstra heuristics (and their generalisations to arbitrary number fields by Cohen and Martinet \cite{CoMa}) predict the bound (\ref{epsilonconjecture})
but only for almost all fields $K$ of degree $d$ and certain ``good'' primes $\ell$.  
In this direction Ellenberg, Pierce and Wood \cite{EllenbergPierceWood} have 
shown that  the above GRH-bound (\ref{GRHbound}) holds unconditionally for almost all fields $K$ of degree $d$, at least for small degrees $d$ and sufficiently large $\ell$.

 \begin{theorem}[Ellenberg, Pierce, Wood]\label{ThmEPW}
Suppose $d\in \{2,3,4,5\}$,  $\nu_0(2)=\nu_0(3)=1$, $\nu_0(4)=1/48$, $\nu_0(5)=1/200$, and $\varepsilon>0$. Then for all but 
\begin{equation*}
O_{\ell,\varepsilon}(X^{1-\min\{1/(2\ell(d-1)),\nu_0(d)\}+\varepsilon})
\end{equation*}
degree $d$ number fields $K$
with $\DK\leq X$  (and non-$D_4$ when $d=4$) we have
\begin{alignat}1\label{EPWbound}
\#Cl_K[\ell] \ll_{\ell,\varepsilon} \DK^{1/2-\min\{1/(2\ell(d-1)),\nu_0(d)\}+\varepsilon}.
\end{alignat}
\end{theorem}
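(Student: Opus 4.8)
\noindent\emph{Proof proposal.} The plan is to separate the degree-$d$ fields into those possessing ``enough'' small rational primes with a prime ideal factor of residue degree one in $\OO_K$, and a sparse remainder that we count. Recall that the GRH-bound \eqref{GRHbound} is proved in \cite{EllVentorclass} by a counting argument in which the Riemann hypothesis for $\zeta_K$ is used \emph{only} to produce, via effective Chebotarev, roughly $Z/\log Z$ rational primes $p\le Z$ with $p\nmid\DK$ having a prime ideal of residue degree one in $\OO_K$. Isolating the purely algebraic part yields a statement of the following shape, which I would quote from \cite{EllVentorclass} after recording the uniformity needed in the set of auxiliary primes: possession of many small rational primes $p\nmid\DK$ with a degree-one prime factor in $\OO_K$ forces $\#Cl_K[\ell]$ to be small, the saving over \eqref{trivial} increasing with the abundance of such primes and capped at the GRH exponent $1/(2\ell(d-1))$; concretely, for a suitable constant $c=c(d,\ell)$, having $\gg\DK^{\delta}$ of them below $\DK^{c\delta}$ gives $\#Cl_K[\ell]\ll_{d,\ell,\varepsilon}\DK^{1/2-\delta+\varepsilon}$ for every $\delta<1/(2\ell(d-1))$. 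Taking $\delta$ just below $\min\{1/(2\ell(d-1)),\nu_0(d)\}$, it then suffices to show that all but $O_{\ell,\varepsilon}(X^{1-\delta+\varepsilon})$ of the fields counted in the theorem have this many small primes with a degree-one factor.

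This I would establish by a large-sieve (second-moment) argument in the family of degree-$d$ fields $K$ with $\DK\le X$ (excluding $D_4$-quartics when $d=4$). After a dyadic localisation so that $\DK\asymp X$, and for a parameter $Q$ that is a power of $X$, put
\begin{equation*}
\omega_Q(K):=\#\bigl\{p\le Q\ :\ p\nmid\DK\text{ and }p\text{ has a prime ideal factor of residue degree one in }\OO_K\bigr\}.
\end{equation*}
For each fixed $p$, a proportion $\theta_p$ bounded away from $0$ of the fields in the family have $p\nmid\DK$ with such a prime factor, so the mean of $\omega_Q$ over the family is $\asymp Q/\log Q$. The crucial input is that the known asymptotic counts of degree-$d$ fields of bounded discriminant remain valid, with error of strictly smaller order, when splitting types are prescribed simultaneously at all primes of a finite set $S$ with $\prod_{p\in S}p\le X^{\nu_0(d)}$: for $d=2$ by sieving squarefree kernels, for $d=3$ by Davenport--Heilbronn with congruence conditions, and for $d\in\{4,5\}$ by Bhargava's parametrisations, with no restriction for $d\le 3$, whence $\nu_0(2)=\nu_0(3)=1$. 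These multi-prime asymptotics yield a large-sieve inequality for the family valid up to level $X^{\nu_0(d)}$, bounding the variance of $\omega_Q$ by $\ll X\cdot Q/\log Q$; hence, by Chebyshev, $\omega_Q(K)$ attains its mean order $\asymp Q/\log Q$ for all but $O(X\log X/Q)$ of the fields. Taking $Q$ comparable to $X^{\delta}$ (which is permissible since $\delta<\nu_0(d)$), the exceptional set has size $O_{\ell,\varepsilon}(X^{1-\delta+\varepsilon})$ and every surviving field has enough small primes of the required kind to satisfy the hypothesis of the first step with our $\delta$; summing over the dyadic ranges gives \eqref{EPWbound}.

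The hard part will be the level-of-distribution input driving the large sieve: asymptotics for degree-$d$ number fields of bounded discriminant with splitting behaviour prescribed to a conductor as large as $X^{\nu_0(d)}$, carrying a power-saving error that is uniform in that conductor. For $d=2$ this is elementary, and for $d=3$ it follows from Davenport--Heilbronn with congruences together with the available power-saving error terms, so no loss occurs; but for $d=4,5$ it must be mined from Bhargava's geometry-of-numbers counts and their accompanying tail and uniformity estimates, and the best admissible levels presently known --- reflected in the values $\nu_0(4)=1/48$ and $\nu_0(5)=1/200$ --- are the true bottleneck. They are precisely what confines the theorem to $d\le 5$ and, for those $\ell$ with $1/(2\ell(d-1))>\nu_0(d)$, reduces the gain from the full GRH exponent down to $\nu_0(d)$. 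A subsidiary technical task is to render the Ellenberg--Venkatesh estimate of the first step uniform in, and essentially insensitive to, the particular choice of the auxiliary set of small primes.
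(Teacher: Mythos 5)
Your proposal is correct in outline and follows essentially the same route as the Ellenberg--Pierce--Wood proof that this paper quotes and whose machinery it generalises: isolate the Ellenberg--Venkatesh lemma (GRH entering only through the supply of small primes with degree-one factors, the height/Silverman bound giving the cap $1/(2\ell(d-1))$, cf.\ Proposition \ref{keylemma} and Proposition \ref{mainprop}), and then run a Chebyshev second-moment sieve over the family using field-counting asymptotics with splitting conditions at one and two primes whose power-saving errors are uniform in the prescribed primes (cf.\ Lemma \ref{sievelemma} and \S\ref{AppCheb}), the exponents $\nu_0(4)=1/48$, $\nu_0(5)=1/200$ coming from the quality of those error terms. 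The only cosmetic difference is that you phrase the counting input as a level of distribution for arbitrarily many simultaneous local conditions, whereas the sieve only needs conditions at one or two primes, which is exactly what the known asymptotics supply.
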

Note that the number of fields $K$ of degree $d\leq 5$  (and non-$D_4$ when $d=4$)  with $\DK\leq X$ grows linearly  in $X$, so that $100\%$ of these  fields satisfy the bound (\ref{EPWbound})
when enumerated by modulus of the discriminant. The cases $d=4,5$ were recently improved by the second author \cite{ltor}.

\subsection{Results}
Theorem \ref{ThmEPW} relies on uniform, power-saving error terms for the asymptotics of degree $d$-fields with chosen
splitting types at a finite set of primes; the results in \cite{EllenbergPierceWood}  are formulated in such a way that whenever 
such asymptotics are available 
then the GRH-bound (\ref{GRHbound}) will hold for almost all $K$ and all
$\ell$ sufficiently large. As  already mentioned in 
\cite[last paragraph of Section 2.]{EllenbergPierceWood}, this extends straightforwardly to allow arbitrary ground fields $F$.
Our first result combines this extension with an idea from \cite{ltor} to make further progress by showing that one can even go beyond the GRH-bound,
at least for $n>3$. 
Here we content ourselves with a simple consequence of Theorem \ref{thm:impGRH}.

Let $\SSS$ be a family of  
degree $n$ extensions $K\subseteq \Fbar$ of $F$, let $\E$ be a finite set of prime ideals
$\p$ in $\Oseen_F$, and set
\begin{equation*}
 N_\SSS(X):=\card\{K\in \SSS; \DK\leq X\}.
\end{equation*}
Let $\ef=\p$ or $\ef=\p\q$ for distinct prime ideals $\p$ and $\q$ of
$\Oseen_F$ with $\p,\q\notin\E$, and let $N_\SSS(\ef;X)$ be the number of
fields $K$ counted in $N_\SSS(X)$ in which the prime ideals dividing $\ef$ split
completely. Suppose that $c_\SSS >0$, $0\leq \tau< 1$, and $\sigma\geq 0$, and that we have 
\begin{alignat}1\label{IntroScounting1}
N_\SSS(X)&=c_\SSS X+ O_{\SSS,\varepsilon}\left(X^{\tau+\varepsilon}\right), \\ 
\label{IntroScounting2}
N_\SSS(\ef;X)&=\delta_\ef c_\SSS X+ O_{\SSS,\varepsilon}\left((\absnorm(\ef))^{\sigma}X^{\tau+\varepsilon}\right), 
\end{alignat}
where $\delta_\ef$ is a multiplicative function with
$1\ll_{\SSS}\delta_\p\leq 1$ if $\p\notin \E$.

\begin{theorem}\label{introthm:impGRH}
Suppose $F$ is a number field, and $\SSS$ is a family of  
degree $n$ extensions $K\subseteq \Fbar$ of $F$. Let $\varepsilon>0$, and suppose (\ref{IntroScounting1}) and (\ref{IntroScounting2}) do hold for the family $\SSS$ and some finite set $\E$ of primes in $\Oseen_F$.
Then we have for all sufficiently large $\ell$ 
\begin{alignat}1\label{impGRHbound}
\#Cl_K[\ell] \ll_{\SSS,\ell,\varepsilon} \DK^{1/2-\frac{1}{\ell(n+1)}+\varepsilon}
\end{alignat}
for $100\%$ of $K\in \SSS$ (when enumerated by modulus of the discriminant).
\end{theorem}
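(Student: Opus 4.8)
The plan is to deduce this as a special case of Theorem~\ref{thm:impGRH}, whose proof rests on two essentially independent ingredients: a local bound for $\#Cl_K[\ell]$ of Ellenberg--Venkatesh type, and a soft ``almost all fields have many small completely split primes'' argument fed by the counting hypotheses \eqref{IntroScounting1}--\eqref{IntroScounting2}. Let me describe both.

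\emph{The local bound.} Refining \cite[Prop.~3.1, Lem.~2.3]{EllVentorclass} with the idea of \cite{ltor}, one proves a statement of roughly the following shape: there is a constant $\delta_0=\delta_0(n,\ell)$ of size $\asymp 1/\ell$ such that if $K\in\SSS$ admits at least $\DK^{\delta_0-\varepsilon}$ prime ideals $\p\nmid n$ of $\Oseen_F$ with $\absnorm(\p)\le\DK^{\delta_0}$ that split completely in $K$, then $\#Cl_K[\ell]\ll_{\SSS,\ell,\varepsilon}\DK^{1/2-\frac1{\ell(n+1)}+\varepsilon}$. Each such $\p$ contributes $n$ prime ideals of $\Oseen_K$ of the same small norm, and the improvement over the GRH-exponent $\frac1{2\ell(n-1)}$ of \eqref{GRHbound} --- a genuine improvement precisely when $n>3$ --- comes from carrying out the underlying geometry-of-numbers reduction relative to $F$ rather than over $\QQ$. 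I expect this to be the main obstacle; the rest is comparatively routine.

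\emph{Most fields qualify.} It then suffices to show that all but a density-zero set of $K\in\SSS$ meet the hypothesis of the local bound. Fix a positive constant $\delta\le\delta_0$ that is also small enough in terms of $\tau$ and $\sigma$; for $\ell$ sufficiently large (as in the statement) $\delta_0$ itself is small enough, and one takes $\delta=\delta_0$. I would split $\SSS$ into dyadic windows $Y<\DK\le 2Y$, put $Q:=Y^{\delta}$, and for $K$ in such a window let $\nu(K)$ be the number of primes $\p\notin\E$ of $\Oseen_F$ with $\absnorm(\p)\le Q$ splitting completely in $K$. Summing \eqref{IntroScounting2} (evaluated at $2Y$ and at $Y$) over $\ef=\p$ yields the first moment $\sum_{Y<\DK\le 2Y}\nu(K)$, and summing it over $\ef=\p\q$ with $\p\ne\q$ yields the second moment, via the identity $\nu(K)^2=\nu(K)+\#\{(\p,\q):\p\ne\q,\ \p,\q\text{ both split in }K\}$ and the multiplicativity $\delta_{\p\q}=\delta_\p\delta_\q$. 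Writing $S:=\sum_{\absnorm(\p)\le Q,\ \p\notin\E}\delta_\p$, the lower bound $\delta_\p\gg_\SSS 1$ gives $S\gg_\SSS Q/\log Q$, while $\delta_\p\le 1$ gives $\sum_\p\delta_\p^2\le S$; hence the main terms of the two moments are $c_\SSS Y\,S$ and $c_\SSS Y\,(S^2+O(S))$, whereas the error terms are $O_{\SSS,\varepsilon}(Q^{\sigma+1}Y^{\tau+\varepsilon})$ and $O_{\SSS,\varepsilon}(Q^{2\sigma+2}Y^{\tau+\varepsilon})$. Smallness of $\delta$ is exactly what makes both errors negligible against the corresponding main terms, so that over the $c_\SSS Y(1+o(1))$ fields of the window, $\nu$ has mean $(1+o(1))S$ and variance $o(S^2)$. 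Chebyshev's inequality then forces all but $o(Y)$ of these fields to satisfy $\nu(K)\ge S/2\ge\DK^{\delta-\varepsilon}$, i.e.\ to satisfy the hypothesis of the local bound.

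\emph{Conclusion.} Summing the $o(Y)$ exceptional counts over the dyadic windows with $Y\ge X^{1/2}$, and absorbing the $O(X^{1/2})$ fields with $\DK\le X^{1/2}$ (negligible by \eqref{IntroScounting1}), one concludes that $\#Cl_K[\ell]\ll_{\SSS,\ell,\varepsilon}\DK^{1/2-\frac1{\ell(n+1)}+\varepsilon}$ holds for all but $o(X)$ of the fields $K\in\SSS$ with $\DK\le X$; this is \eqref{impGRHbound}. The only real care needed in this last step is keeping the implied constants in the error terms uniform across the dyadic ranges; as noted, the substantive difficulty is the relative Ellenberg--Venkatesh bound of the first step.
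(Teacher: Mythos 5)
Your second step (the Chebyshev/second-moment argument deducing from \eqref{IntroScounting1}--\eqref{IntroScounting2} that almost all $K\in\SSS$ have $\gg \DK^{\delta_0-\varepsilon}$ completely split primes of norm $\le \DK^{\delta_0}$) matches the paper's use of the Ellenberg--Pierce--Wood sieve (Lemmas \ref{sievelemma} and \ref{sieveapplication}) and is fine. The genuine gap is in your first step. You assert a pointwise ``local bound'': that any single field with that many small split primes satisfies $\#Cl_K[\ell]\ll \DK^{1/2-\frac{1}{\ell(n+1)}+\varepsilon}$, and you attribute the improvement over the GRH exponent $\frac{1}{2\ell(n-1)}$ to ``carrying out the geometry-of-numbers reduction relative to $F$ rather than over $\QQ$.'' No such pointwise statement is available, and the proposed mechanism does not produce it: the Ellenberg--Venkatesh lemma is already a relative statement over $F$, and in the form used here (Proposition \ref{keylemma}) its saving is capped by $\gamma/\ell$, where $\gamma$ is the exponent in the lower bound $\del>\absnorm(\Delta(K/F))^\gamma$ for the minimal height of a generator. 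Pointwise one only has Silverman's bound $\del\gg \DK^{1/(2(n-1))}$, which is exactly what yields the GRH exponent $\frac{1}{2\ell(n-1)}$; working relative to $F$ changes nothing in this respect.

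The idea you are missing --- and which is the actual content of the paper's proof of Theorem \ref{introthm:impGRH} --- is to improve $\gamma$ not pointwise but for almost all fields in the family, by a height-counting argument: the number of $\alpha\in\Fbar$ of degree $n$ over $F$ with $H_{F(\alpha)}(\alpha)\le X^\gamma$ is $\ll X^{\gamma(n+1)}$ (see \eqref{degnumbers}), hence at most $O(X^{\gamma(n+1)})$ fields $K\in\SSS$ with $\DK\le X$ can satisfy $\del\le \DK^{\gamma}$. Taking $\gamma$ just below $1/(n+1)$ makes this exceptional set of density zero, and for all remaining fields Proposition \ref{keylemma} applies with this larger $\gamma$, giving the saving $\gamma/\ell\to \frac{1}{\ell(n+1)}$ once $\ell$ is large enough that $\gamma/\ell\le\tilde\delta_0$ (this is why the theorem is stated only for large $\ell$, a constraint your sketch does not account for either). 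This is Proposition \ref{impGRH} combined with Proposition \ref{mainprop}, where the fields with a small-height generator are fed into the exceptional-set parameter $\theta=\lambda\gamma$. Without this step your argument proves at best the GRH-type exponent of Proposition \ref{mainprop1}, not \eqref{impGRHbound}.
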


The hypotheses of Theorem \ref{introthm:impGRH} are believed to hold, for example, for the family of degree-$n$-extensions of $F$ whose normal closure has Galois group $S_n$. However, at present times this is known only for a few cases, for instance when $n\leq 5$ and $F=\QQ$ (see \cite{EllenbergPierceWood}).

Our first main result generalises the case $d=2$ of Theorem \ref{ThmEPW} in two different 
directions\footnote{The cases $\ell\leq 3$ are actually not covered but these cases are superseded by the aforementioned stronger pointwise bounds. And in fact our  
stronger Theorem \ref{thm:general}  fully covers the
case $d=2$ of Theorem \ref{ThmEPW} too.}. Write $m=[F:\QQ]$, and recall that 
$\totram$ is a certain family of cyclic degree-$n$-extensions of $F$. 
We define 
  \begin{equation}\label{def:deltatilde}
    \tilde{\delta}=\tilde{\delta}(m,n):=
    \begin{cases}
      \frac{1}{8\phi(n)(n-1)} &\text{ if
      }m=1\\
      \frac{1}{2(m+1)\phi(n)(n-1)} &\text{ if
      }m\geq 2,
    \end{cases}
  \end{equation}
  where $\phi(\cdot)$  denotes Euler's totient function.

\begin{theorem}\label{Thm3}
Suppose $F$ and $\QQ(\mu_n(\Fbar))$ are linearly disjoint over $\QQ$, and $\varepsilon>0$. Then for all but $O_{F,n,\varepsilon}(X^{\frac{1}{n-1}-\min\{\frac{1}{2\ell(n-1)},\tilde{\delta}\}+\varepsilon})$ fields $K$ in  $\totram$ with $\DK\leq X$ we have 
\begin{alignat}1\label{Thm3bound}
\#Cl_K[\ell] \ll_{F,n,\ell,\varepsilon} \DK^{\frac{1}{2}-\min\{\frac{1}{2\ell(n-1)},\tilde{\delta}\}+\varepsilon}.
\end{alignat}
\end{theorem}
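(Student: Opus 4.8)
\emph{Overview.} The proof runs in two stages: an arithmetic counting result for the family $\SSS=\totram$, followed by an application of the general torsion machinery of the paper. Concretely, the plan is to establish for $\SSS=\totram$ asymptotics of the type \eqref{IntroScounting1}--\eqref{IntroScounting2}, in the sharpened normalisation $N_\SSS(X)=c_\SSS X^{\rho}+O_{F,n,\varepsilon}(X^{\tau+\varepsilon})$ with $\rho=1/(n-1)$ and explicit $\tau<\rho$, $\sigma\geq 0$, and then to feed this into Theorem~\ref{thm:general}, whose input is precisely counting data of the form \eqref{IntroScounting1}--\eqref{IntroScounting2}. The values $\min\{\tfrac1{2\ell(n-1)},\tilde\delta\}$ in \eqref{Thm3bound} and the exponent $\tfrac1{n-1}-\min\{\ldots\}$ of the exceptional set are then exactly what the optimisation in that theorem returns once the $\tau,\sigma$ computed for $\totram$ are substituted. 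The normalisation $\rho=1/(n-1)$ is dictated by the fact that for $K\in\totram$ the modulus $\DK$ equals, up to the fixed factor coming from $F/\QQ$ and a factor supported on the primes dividing $n$ (which ranges over a finite set), the $(n-1)$-st power of the absolute norm of the conductor $\mathfrak f$ of $K/F$, a squarefree product of prime ideals of $\OO_F$ away from $n$.

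\emph{Step 1: counting $\totram$.} Put $L=F(\mu_n(\Fbar))$; linear disjointness gives $\Gal(L/F)\cong(\ZZ/n\ZZ)^\times$ of order $\phi(n)$ and $[L:\QQ]=m\phi(n)$. By Kummer theory over $L$ together with descent to $F$, a field $K\in\totram$ is parametrised by a squarefree conductor ideal $\mathfrak f\subseteq\OO_F$ prime to $n$ and a character lying in the $\omega$-eigenspace, $\omega$ the mod-$n$ cyclotomic character of $\Gal(L/F)$, of exact conductor $\mathfrak f$; the totally-ramified-or-unramified condition becomes a local condition at the primes over $\mathfrak f$ which, together with the eigenspace constraint coupling the primes above a given prime of $\OO_F$, is essentially prescribed prime-by-prime. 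Hence $N_\SSS(X)$ is, away from the finitely many ``exceptional'' fields and bounded factors at $n$, a sum over squarefree $\mathfrak f\subseteq\OO_F$ prime to $n$ with $\absnorm(\mathfrak f)\leq X^{1/(n-1)}$ of a nonnegative multiplicative weight of controlled average; requiring in \eqref{IntroScounting2} that the primes dividing $\ef\in\{\p,\p\q\}$ split completely in $K$ multiplies the local weight there by a multiplicative $\delta_\ef$ with $1\ll_{F,n}\delta_\p\leq 1$. Evaluating these sums --- forming the associated Dirichlet series, factoring off a suitable power of $\zeta_F$ (resp.\ $\zeta_L$) and bounding the holomorphic remainder by a Perron/contour argument, or by an equivalent hyperbola-type estimate over $\OO_F$ --- yields
\[
  N_\SSS(X)=c_\SSS X^{\frac{1}{n-1}}+O_{F,n,\varepsilon}\left(X^{\tau+\varepsilon}\right),\qquad
  N_\SSS(\ef;X)=\delta_\ef c_\SSS X^{\frac{1}{n-1}}+O_{F,n,\varepsilon}\left((\absnorm(\ef))^{\sigma}X^{\tau+\varepsilon}\right),
\]
with $\tau<1/(n-1)$. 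The size of the power saving (hence $\tau$, hence $\tilde\delta$) is governed by the error term in a squarefree, resp.\ $n$-th-power-free, ideal count: of square-root quality when the relevant base is $\QQ$ ($m=1$) and weaker over a field of degree a multiple of $\phi(n)$ when $m\geq 2$ --- this is the origin of the case split and of the $\phi(n)$ in \eqref{def:deltatilde} --- while $\sigma$ is obtained by keeping the $\ef$-dependence explicit throughout the Perron step.

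\emph{Step 2: the torsion bound.} Granted Step~1, $\totram$ satisfies the hypotheses of Theorem~\ref{thm:general}, so the conclusion follows from it. The mechanism is the Ellenberg--Venkatesh dichotomy combined with a second-moment argument in the style of \cite{EllenbergPierceWood} (refined as in \cite{ltor}): a prime $\p$ of $\OO_F$ splitting completely in $K$ produces $n$ degree-one primes of $\OO_K$ of norm $\absnorm(\p)$, so a $K$ with sufficiently many completely split primes of small norm has every class in $Cl_K[\ell]$ represented by an ideal of small norm, whence $\#Cl_K[\ell]\ll\DK^{1/2-\delta^{\ast}+\varepsilon}$; meanwhile \eqref{IntroScounting2} with $\delta_\p\gg 1$ shows that, on average over $K\in\totram$, a positive proportion of small primes of $\OO_F$ split completely, and a large-sieve/second-moment estimate built from \eqref{IntroScounting1}--\eqref{IntroScounting2} (using the case $\ef=\p\q$) bounds the number of ``deficient'' $K$ with $\DK\leq X$ by $O(X^{\frac{1}{n-1}-\delta^{\ast}+\varepsilon})$. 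Optimising the cut-off for ``small norm'' equalises the two contributions; inserting $\rho=1/(n-1)$ and the $\tau,\sigma$ of Step~1 gives $\delta^{\ast}=\min\{\tfrac1{2\ell(n-1)},\tilde\delta\}$, with the first term winning for $\ell$ large, which is precisely \eqref{Thm3bound} together with the asserted exceptional set.

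\emph{Main obstacle.} The crux is Step~1: extracting a genuine power saving that is at the same time polynomially uniform in $\absnorm(\ef)$, over an arbitrary ground field $F$, and with the conductor/eigenspace bookkeeping forced on us because $F$ need not contain $\mu_n$. The leading term is classical conductor counting; the real work is pushing the error term down to $X^{\tau}$ with the sharp $\tau$ --- in particular exploiting the genuine square-root cancellation available when $F=\QQ$ to reach the better constant $\tilde\delta=\tfrac1{8\phi(n)(n-1)}$ there. Once Step~1 is in place, Step~2 is essentially an invocation of the packaged general theorem.
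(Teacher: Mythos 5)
Your overall architecture is the same as the paper's: a counting theorem for $\totram$ with prescribed complete splitting and an explicit, $\absnorm(\ef)$-uniform error term (the paper's Theorems \ref{thm:count_quad} and \ref{thm:count_quad_general}), fed into the Ellenberg--Venkatesh lemma plus the Ellenberg--Pierce--Wood Chebyshev sieve (Propositions \ref{keylemma}, \ref{mainprop}, \ref{mainprop1}, culminating in Theorem \ref{thm:general}). Your Step 2 is, up to loose phrasing, exactly this packaged machinery, and it is fine. The genuine gap is in Step 1, which you correctly single out as the crux but do not actually carry out, and the route you sketch for it would not deliver the inputs you need. Producing \eqref{IntroScounting2} with a power saving that is \emph{polynomially uniform} in $\absnorm(\ef)$ is not a matter of ``factoring off a power of $\zeta_F$ (resp.\ $\zeta_L$) and bounding the holomorphic remainder'': once the condition that $\p$ (and $\q$) split completely is imposed, the associated Dirichlet series is no longer a zeta factor times something uniformly bounded. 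Detecting the splitting condition introduces twists, and after the M\"obius/Poisson step the series becomes a finite sum of Hecke $L$-functions of $F_0=F(\mu_n(\Fbar))$ whose conductors are only bounded by $\absnorm(\ef)^{\phi(n)}$ (Proposition \ref{prop:ds_merom}). The admissible $\tau$ and $\sigma$ --- and hence the specific value of $\tilde\delta$ in \eqref{def:deltatilde} that Theorem \ref{Thm3} asserts --- come from conductor- and $t$-aspect subconvexity for these $L$-functions (Burgess for Dirichlet $L$-functions when $F=\QQ$, $n=2$; Wu's bound with the $7/64$ Ramanujan exponent in general) combined with Phragm\'en--Lindel\"of and a Perron/tauberian argument, not from square-root cancellation in a squarefree or $n$-th-power-free ideal count. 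In particular, your explanation of the $m=1$ versus $m\geq 2$ case split and of the factor $\phi(n)$ is not the actual source of these constants: they arise from the degree $m\phi(n)$ of $F_0$ entering the analytic conductor in the $t$-aspect and from which subconvexity exponent is available, so your sketch would not reproduce the exponents $\frac{1}{8\phi(n)(n-1)}$ and $\frac{1}{2(m+1)\phi(n)(n-1)}$ claimed in the statement.

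Two further points in Step 1 are asserted rather than established. The multiplicativity of $\delta_\ef$ (asymptotic independence of the local conditions), which you describe as ``essentially prescribed prime-by-prime'', is exactly the delicate feature that forces the restriction to $\totram$ in the first place; in the paper it only emerges from the residue computation in \S\ref{sec:residue} after the harmonic-analysis step, and the explicit values \eqref{eq:lead_const} are needed to verify $1\ll\delta_\p\leq 1$ for the sieve. And the elimination of characters that are not surjective (i.e.\ of proper subextensions) requires an a priori bound such as Lemma \ref{lem:unram}; nothing in your Kummer-theoretic parametrisation addresses this. Minor additionally: the result that takes counting data of the shape \eqref{IntroScounting1}--\eqref{IntroScounting2} as input is Proposition \ref{mainprop1} (Theorem \ref{thm:general} is the specialisation to $\totram(M)$), and the EV mechanism uses the lower bound $\del\gg\DK^{1/(2(n-1))}$ rather than ``every class represented by a small ideal''; these are presentational, but the missing analytic input for the uniform error term in Step 1 is a real gap, since without it the exponent $\min\{\frac{1}{2\ell(n-1)},\tilde\delta\}$ is unsupported.
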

Since the number of $K\in \totram$ with $\DK\leq X$ grows with the order
$X^{1/(n-1)}$ (cf. Theorem \ref{thm:count_quad} below), we conclude that when
enumerated by $\DK$ then $100\%$ of the fields $K\in\totram$ satisfy the bound
(\ref{Thm3bound}). Theorem \ref{Thm3} offers several ways of obtaining families
of number fields of arbitrarily large degree, for which non-trivial bounds for
$\card\Cl_K[\ell]$ are known for every given $\ell$, for almost all members
of the family. 

For which $\ell$ does the bound on the right-hand side of
(\ref{Thm3bound}) become the GRH-bound?  For $m\geq 2$, we get the
GRH-bound if and only if $\ell\geq m+1$. Theorem \ref{Thm3} will
follow from Theorem \ref{thm:general}, which holds for slightly more
general families of number fields and provides a slightly larger value
for $\tilde{\delta}$.

Partial summation, using the trivial bound (\ref{trivial}) for the exceptional fields, immediately gives the following average bound.
\begin{corollary}\label{Cor1}
Suppose $F$ and $\QQ(\mu_n(\Fbar))$ are linearly disjoint over $\QQ$, and $\varepsilon>0$.
Then we have 
$$\sum_{K\in \totram \atop \DK\leq X}\#Cl_K[\ell] \ll_{F,n,\ell,\varepsilon} X^{\frac{1}{2}+\frac{1}{n-1}-\min\{\frac{1}{2\ell(n-1)},\tilde{\delta}\}+\varepsilon}.$$
\end{corollary}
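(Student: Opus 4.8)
The corollary follows from Theorem~\ref{Thm3} and the trivial bound~(\ref{trivial}) by the usual device of separating, among the fields of bounded discriminant, those for which the strong bound~(\ref{Thm3bound}) holds from the remaining (rare) ones. Fix $\varepsilon>0$ and put $\mu:=\min\{\tfrac{1}{2\ell(n-1)},\tilde{\delta}\}$; note that $0<\mu\leq\tilde{\delta}<\tfrac{1}{n-1}$ by~(\ref{def:deltatilde}), and that every $K\in\totram$ has the fixed degree $d=mn=[K:\QQ]$. By Theorem~\ref{Thm3} the set $\{K\in\totram:\DK\leq X\}$ is the disjoint union of a set $\mathcal{G}$ on which $\#Cl_K[\ell]\ll_{F,n,\ell,\varepsilon}\DK^{1/2-\mu+\varepsilon}$ holds and an exceptional set $\mathcal{B}$ with $\#\mathcal{B}\ll_{F,n,\varepsilon}X^{\frac{1}{n-1}-\mu+\varepsilon}$.

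For $\mathcal{G}$, since $\tfrac12-\mu+\varepsilon>0$ each summand is $\ll_{F,n,\ell,\varepsilon}X^{1/2-\mu+\varepsilon}$, and by Theorem~\ref{thm:count_quad} there are at most $\#\mathcal{G}\leq N_\totram(X)\ll_{F,n}X^{1/(n-1)}$ of them, so
\[
\sum_{K\in\mathcal{G}}\#Cl_K[\ell]\ll_{F,n,\ell,\varepsilon}X^{\frac{1}{2}+\frac{1}{n-1}-\mu+\varepsilon};
\]
the same estimate also comes out of partial summation against $N_\totram(t)$. For $\mathcal{B}$, the trivial bound~(\ref{trivial}) applied with the fixed degree $d=mn$ gives $\#Cl_K[\ell]\ll_{F,n,\varepsilon}\DK^{1/2+\varepsilon}\leq X^{1/2+\varepsilon}$ for each such $K$, and therefore
\[
\sum_{K\in\mathcal{B}}\#Cl_K[\ell]\ll_{F,n,\varepsilon}X^{\frac{1}{n-1}-\mu+\varepsilon}\cdot X^{\frac{1}{2}+\varepsilon}=X^{\frac{1}{2}+\frac{1}{n-1}-\mu+2\varepsilon}.
\]
Adding the two contributions and running the whole argument with $\varepsilon/2$ in place of $\varepsilon$ yields the asserted bound, with implied constant depending only on $F,n,\ell,\varepsilon$.

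There is essentially no obstacle here; the content is just that the error term in Theorem~\ref{Thm3} was arranged to be small enough that applying the trivial bound~(\ref{trivial}) field-by-field on the exceptional set costs only a factor $X^{1/2+\varepsilon}$, which is exactly absorbed by the saving $\mu$ in $\#\mathcal{B}$, so the exceptional contribution matches (rather than dominates) the main term. The only care needed is the bookkeeping of implied constants: those in~(\ref{Thm3bound}), in the exceptional count of Theorem~\ref{Thm3}, and in~(\ref{trivial}) (with $d=mn$) all depend only on $F$, $n$, $\ell$, and $\varepsilon$, so the final constant does too.
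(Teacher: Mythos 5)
Your proposal is correct and follows essentially the same route as the paper: the paper's proof is exactly the one-line observation that partial summation (equivalently, bounding the good fields by their count $\ll X^{1/(n-1)}$ times the bound of Theorem \ref{Thm3}, and the exceptional fields by their count times the trivial bound (\ref{trivial}) with fixed degree $d=mn$) gives the stated estimate. Your bookkeeping of the exponents and of the dependence of the implied constants on $F,n,\ell,\varepsilon$ matches the paper's intent.
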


A result quite similar to Theorem \ref{Thm3} is obtained
independently by Pierce, Turnage-Butterbaugh and Wood as part of a
very recent preprint \cite{PTWarXiv}. The main differences seem to be
that they obtain much better bounds on the size of the exceptional
set, but their results are restricted to the base field $F=\QQ$, and
it seems unclear to which extent their arguments extend to other
number fields. Their proofs are based on an effective version of the
Chebotarev density theorem for certain families of number fields, for
which they also include several further applications. For example,
they also obtain conditional results for $S_n$-extensions of degree
$n$ and squarefree discriminant. It would be interesting to see
whether the ideas from our Section \ref{SecImpGRH} could be used
beneficially in their proofs to beat the GRH-bound.
  
Our second main result and crucial new input to establish Theorem \ref{Thm3}
is a counting result for the number of fields in the family $\totram$ of bounded
discriminant satisfying prescribed local conditions, with a fairly explicit error term.

We write $\Delta(K/F)$ for the relative discriminant ideal of the extension $K/F$ and $\absnorm\Delta(K/F)$ for its absolute norm. In our notation, we will
not distinguish between prime ideals of the ring of integers $\OO_F$
and the corresponding non-archimedean places of $F$.
For $\npid\geq 0$ and a set $\PPP=\{\ppp_1,\ldots,\ppp_\npid\}$ of pairwise
distinct non-archimedean places of $F$ not dividing $n$, we study the counting function
\begin{equation*}
  N_{\totram}(\PPP;X) := \card\left\{K \in \totram;\ 
    \ppp_1,\ldots,\ppp_\npid\text{ split completely in }K,\
    \absnorm\Delta(K/F)\leq X \right\}.
\end{equation*}

\begin{theorem}\label{thm:count_quad}
  Suppose $F$ and $\QQ(\mu_n(\Fbar))$ are linearly disjoint over $\QQ$, and $\varepsilon>0$. Then 
  \begin{equation*}
    N_{\totram}(\PPP; X) = \delta_{\ppp_1\cdots\ppp_l}c_{F,n}X^{1/(n-1)} + O_{F,n,\npid,\varepsilon}\left(\absnorm(\ppp_1\cdots\ppp_\npid)^{1/(2m)+\varepsilon}X^{(1-\beta)/(n-1)}\right), 
  \end{equation*}
  as $X\to\infty$, where 
  \begin{equation*}
    \beta:=
    \begin{cases}
      1/(4\phi(n)) &\text{ if }m=1,\\
      1/(2m\phi(n)) &\text{ if }m\geq 2.
    \end{cases}
  \end{equation*}
  The constant $c_{F,n}$ is positive and can be computed explicitly. The
  constant $\delta_\ddd$ is multiplicative\footnote{Of course, for $l=0$ we set $\ppp_1\cdots\ppp_l:=\OO_F$ and  
  $\delta_{\OO_F}:=1$.} in $\ddd$ and satisfies 
  \begin{equation}\label{eq:lead_const}
  \delta_\ppp:=\begin{cases}
    1/n &\text{ if }\absnorm(\ppp)\not\equiv 1\bmod n\\
    1/(n(1+\phi(n)\absnorm(\ppp)^{-1})) &\text{ if }\absnorm(\ppp)\equiv 1\bmod n.
  \end{cases}
\end{equation}
\end{theorem}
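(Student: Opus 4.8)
The plan is to parametrise the fields $K\in\totram$ by class field theory, translate the count into a statement about a Dirichlet series over ideals of $\OO_F$, and extract the asymptotics with a quantitative Tauberian/sieve argument that stays uniform in $\PPP$.

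First, since $F$ and $\QQ(\mu_n(\Fbar))$ are linearly disjoint, each $K\in\totram$ corresponds to a surjective character $\psi$ of order $n$ of the idele class group $C_F$, taken up to $\Aut(\ZZ/n\ZZ)$; equivalently, after base change to $L:=F(\mu_n(\Fbar))$, for which $[L:F]=\phi(n)$, to a Kummer class in $L^\times/(L^\times)^n$ satisfying the Galois-descent condition for $\Gal(L/F)$. By the conductor--discriminant formula $\absnorm\Delta(K/F)=\prod_{j=1}^{n-1}\absnorm\mathfrak f(\psi^j)$. The defining property of $\totram$ is precisely that at every prime $\ppp\nmid n$ the restriction of $\psi$ to inertia at $\ppp$ is either trivial or surjective onto $\ZZ/n\ZZ$; as such primes are tame, each nontrivial $\psi^j$ then has conductor exponent $1$ at $\ppp$, so
\[
  \absnorm\Delta(K/F)=\absnorm(\ccc)^{n-1}\,D_n(K),
\]
where $\ccc=\ccc(K)$ is a squarefree ideal of $\OO_F$ coprime to $n$, the tame conductor, and $D_n(K)$ depends only on the behaviour of $K$ above $n$ with $1\le D_n(K)\ll_{F,n}1$. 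This accounts for the exponent $1/(n-1)$, and it also explains the case distinction for $\delta_\ppp$: a cyclic, tamely and totally ramified degree-$n$ extension of $F_\ppp$ exists only when $\absnorm(\ppp)\equiv 1\bmod n$, so a prime with $\absnorm(\ppp)\not\equiv 1\bmod n$ is forced to be unramified in every $K\in\totram$ and splits completely with the generic density $1/n$; when $\absnorm(\ppp)\equiv 1\bmod n$ one must remove the ramified local classes at $\ppp$, each contributing a factor $\absnorm(\ppp)^{n-1}$ to $\absnorm\Delta(K/F)$ and hence weighted by $\absnorm(\ppp)^{-1}$ relative to the scale $X^{1/(n-1)}$, and these contribute a total weight $\asymp\phi(n)\absnorm(\ppp)^{-1}$, which produces the correction $1/(1+\phi(n)\absnorm(\ppp)^{-1})$.

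Next I would study the generating series $\Phi_\PPP(s)=\sum\absnorm\Delta(K/F)^{-s}$, the sum over $K\in\totram$ in which $\ppp_1,\dots,\ppp_\npid$ all split completely. Organising by the local behaviour of $\psi$ and using M\"obius inversion over ray-class moduli, in the style of the classical abelian counting arguments of Wright and M\"aki, one obtains a factorisation of $\Phi_\PPP(s)$ into an Euler product over the primes of $\OO_F$ times a class-group factor; requiring complete splitting at $\ppp_i$ simply replaces the local factor at $\ppp_i$ by its ``unramified, trivial Frobenius'' part, which is exactly what introduces the constants $\delta_{\ppp_i}$ into the residue. Since only the primes $\ppp$ with $\absnorm(\ppp)\equiv 1\bmod n$ carry ramification, $\Phi_\PPP(s)$ has a simple pole at $s=1/(n-1)$ with residue $c_{F,n}\delta_{\ppp_1\cdots\ppp_\npid}$. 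To deduce $N_{\totram}(\PPP;X)$ with a \emph{power}-saving error it is cleanest not to rely on the merely logarithmic zero-free region of $\zeta_F$, but instead to write the count as a sum over the tame conductor $\ccc$, detect squarefreeness of $\ccc$ by M\"obius inversion, and reduce to counting ideals of $\OO_F$ (respectively of $L$) of bounded norm subject to splitting and congruence conditions; feeding in the classical power-saving error for such ideal counts, in which the degree $[L:\QQ]=m\phi(n)$ appears, and splitting the M\"obius sum at a suitable parameter yields the stated main term with an error of the form $X^{(1-\beta)/(n-1)+\varepsilon}$, the precise value of $\beta$ (and its mild dependence on whether $m=1$) resulting from balancing these inputs. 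The constants $c_{F,n}$ and the multiplicative function $\delta_\ddd$ are then read off from the residue.

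The hard part will be the uniformity in $\PPP$: one has to impose the splitting conditions at the $\ppp_i$ so that, at the same time, (i) the local densities in the main term come out exactly as in the formula for $\delta_\ppp$ --- which requires the local analysis to distinguish the unramified and, when $\absnorm(\ppp)\equiv 1\bmod n$, the totally ramified contributions precisely --- and (ii) the inflation of the error term by the extra local factors at the $\ppp_i$ is kept small; tracking the size of the split-completely Euler factors at the $\ppp_i$ and balancing it against the power saving from the ideal count is what forces, and delivers, the factor $\absnorm(\ppp_1\cdots\ppp_\npid)^{1/(2m)+\varepsilon}$. A secondary difficulty, to be settled first, is making the class field theory of the first step fully precise for composite $n$: the Galois descent along $\Gal(L/F)$, the contribution of the wildly ramified primes above $n$ to both the conductor and the local densities, and the bookkeeping of the $\Aut(\ZZ/n\ZZ)$-action used to pass between characters and fields must all be handled with care.
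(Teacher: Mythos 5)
Your structural reduction is essentially the one the paper uses, just in classical clothing: parametrising $\totram$ by order-$n$ idele class characters, using the conductor--discriminant formula so that the tame conductor enters with exponent $n-1$, and unwinding the ray-class/Kummer obstruction over $F_0=F(\mu_n(\Fbar))$ so that the generating series becomes a finite sum of Euler products; your heuristic for $\delta_\ppp$ also matches the residue computation. The genuine gap is in the quantitative step. After removing the class-group and unit obstructions, the non-principal contributions are not plain ideal counts but sums of Hecke characters $\psi$ of $F_0$ whose conductors are divisible by the primes above $\ppp_1,\dots,\ppp_\npid$ (this is exactly how the splitting conditions re-enter after duality), with $\absnorm(\mathfrak{f}_\psi)$ as large as $\absnorm(\ppp_1\cdots\ppp_\npid)^{\phi(n)}$. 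Your plan --- ``classical power-saving error for ideal counts'' plus splitting a M\"obius sum, with the exponents ``resulting from balancing'' --- never explains how to get cancellation in these twisted sums uniformly in the conductor, and it never verifies that the balancing produces the specific exponents in the statement: $\beta=1/(4\phi(n))$ or $1/(2m\phi(n))$, the conductor exponent $1/(2m)+\varepsilon$, and, importantly, an error term with \emph{no} $\varepsilon$ in the exponent of $X$ (your sketch produces $X^{(1-\beta)/(n-1)+\varepsilon}$, which is weaker than the claim).

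For comparison, the paper obtains these exponents not from zero-free regions (which you correctly avoid, but which are not what the alternative route uses either) but from estimates for the relevant Hecke $L$-functions in vertical strips: subconvexity in the conductor and $t$-aspects (Burgess when $F=\QQ$, $n=2$; Wu's hybrid bound with $\theta=7/64$ in general), interpolated by Phragm\'en--Lindel\"of, followed by contour integration (truncated Perron for $n=2$, a weighted Tauberian argument for $n\geq 3$). The strict inequality between the saving $b$ of Theorem \ref{thm:count_quad_general} and $\beta$, which is what absorbs the $X^\varepsilon$ and yields the clean error term of Theorem \ref{thm:count_quad}, comes precisely from going beyond the convexity/trivial threshold; a purely elementary convolution of ideal counts sits at or below that threshold unless you inject some such input. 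So as written, your argument does not establish the stated theorem: you need either to import convexity-breaking bounds for the twisted $L$-functions (tracking the conductor $\absnorm(\ppp_1\cdots\ppp_\npid)^{\phi(n)}$ explicitly), or to redo the balancing honestly and accept a weaker error term than the one claimed.
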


If $n$ is prime, a weaker version of Theorem \ref{thm:count_quad}, with an
error term of the shape $O_{F,n,\PPP}(X^{1/(n-1)-\gamma})$, for some $\gamma>0$,
follows from \cite[Theorem 1.7]{DanRachel}. It is crucial for our work here to
know the dependence of the error term on $\ppp_1,\ldots,\ppp_\npid$ explicitly,
as well as the value of $\gamma$. The multiplicativity of the constant
$\delta_\ddd$ can be interpreted as asymptotic independence of the local conditions
imposed at $\ppp_1,\ldots,\ppp_l$. This is required for our application in
Theorem \ref{Thm3} and constitutes the main reason for restricting our
attention to the family $\totram$. The family of all cyclic
degree-$n$-extensions would not show this independence behavior with respect to
local conditions when counted by discriminant, unless $n$ is prime.

Comparing our Theorem \ref{thm:count_quad} with \cite[Theorem
1.7]{DanRachel}, we observe that $\totram$ has density zero in the
family of all cyclic degree-$n$-extensions of $F$, unless $n$ is
prime. Not many other precise counting results for interesting
zero-density families of number field extensions with fixed Galois
group are available in the literature. A zero-density family of
biquadratic fields was recently considered in \cite{Nick}.

The hypothesis that $F$ be linearly disjoint from $\QQ(\mu_n(\Fbar))$
is necessary for the result to hold. If it is not satisfied, the
asymptotic formula for $N_{\totram}(\PPP;X)$ will involve logarithms
and secondary main terms. For prime $n$, this can be observed from
\cite[Theorem 1.7]{DanRachel}.

\subsection{Discriminant zeta function} We prove Theorem
\ref{thm:count_quad} by studying the corresponding zeta function,
using the approach of \cite{MR969545, DanRachel}. For locally compact
abelian groups $A$ and $B$, we denote by $\Hom(A,B)$ the group of
\emph{continuous} homomorphisms from $A$ to $B$, equipped with the
compact-open topology. Let $G=\mu_n$ be the group of $n$-th roots of
unity in $\CC$. We let $\gext(F)\subseteq\Hom(\Gal(\Fbar/F), G)$ be
the set of continuous \emph{surjective} homomorphisms
$\Gal(\Fbar/F)\to G$. A homomorphism $\varphi\in\gext(F)$ corresponds
uniquely to a pair $(K_{\varphi}/F,\psi)$, where $K_\varphi/F$ is a
Galois extension and $\psi$ is an isomorphism
$\Gal(K_\varphi/F)\to G$. Indeed, we take $K_\varphi$ to be the fixed
field of $\ker\varphi$ and $\psi$ the homomorphism induced by
$\varphi$ on the quotient $\Gal(K_\varphi/F)$. Clearly, each
$K_\varphi$ is induced by $\card\Aut(G)=\phi(n)$ different
$\varphi\in\gext(F)$. We write $\Delta(\varphi):=\Delta(K_\varphi/F)$.

For each place $v$, we fix an algebraic closure $\Fbar_v\supseteq\Fbar$. Then
each homomorphism $\varphi\in\Hom(\Gal(\Fbar/F), G)$ defines local homomorphisms
$\varphi_v\in\Hom(\Gal(\Fbar_v/F_v),G)$. Let $e(\varphi_v)$ denote the
ramification index of the corresponding local extension $K_{\varphi_v}/F_v$.
For non-archimedean $v$, we denote the cardinality of its residue field by
$q_v$.
 
Theorem \ref{thm:count_quad} will follow immediately from Theorem
\ref{thm:count_quad_general} below, which provides better error terms and
handles slightly more general families of fields, for which we also allow local
restrictions at places dividing $n\infty$.

For $v\mid n\infty$, let $\Lambda_v\subseteq \Hom(\Gal(\Fbar_v/F_v),G)$ be any
subset containing the trivial homomorphism $1$. With $\Lambda :=
(\Lambda_v)_{v\mid n\infty}$, we consider the family
\begin{equation*}
  \totram(\Lambda) := \{\varphi\in\gext(F);\ K_\varphi\in\totram\text{ and
  }\varphi_v\in\Lambda_v\text{ for all }v\mid n\infty\}
\end{equation*}
and the counting functions
\begin{equation*}
  N_{\totram}(\Lambda, \PPP; X) := \card\{\varphi\in\totram(\Lambda);\  
    \begin{aligned}
      \ppp_1,\ldots,\ppp_\npid\text{
        split completely in }K_\varphi,\ \absnorm\Delta(\varphi)\leq X
    \end{aligned}
   \}.
\end{equation*}

\begin{theorem}\label{thm:count_quad_general}
  Suppose $F$ and $\QQ(\mu_n(\Fbar))$ are linearly disjoint over $\QQ$, and
  $\varepsilon>0$. With $\Lambda$ as above, we have 
  \begin{equation*}
    N_{\totram}(\Lambda,\PPP; X) = \delta_{\ppp_1\cdots\ppp_l}c_{F,n,\Lambda}X^{1/(n-1)} + O_{F,n,\npid,\varepsilon}\left(\absnorm(\ppp_1\cdots\ppp_\npid)^{a+\varepsilon}X^{(1-b)/(n-1)+\varepsilon}\right), 
  \end{equation*}
  as $X\to\infty$, where 
  \begin{equation*}
    a:=
    \begin{cases}
      3/16 &\text{ if } n=2\text{ and }m=1,\\
      103/512 &\text{ if } n=2\text{ and }m=2,\\
      1/(2m) &\text{ if } n\geq 3\text{ or }m\geq 3,
    \end{cases}
  \end{equation*}
  and
  \begin{equation*}
    b:=
    \begin{cases}
      13/32 &\text{ if } n=2\text{ and }m=1,\\
      153/512 &\text{ if } n=2\text{ and }m=2,\\
      \min\{1/4, 64/(103\phi(n)m)\} &\text{ if }n\geq 3 \text{ or }m\geq 3.
    \end{cases}
  \end{equation*}
  The constant $c_{F,n,\Lambda}$ is positive and can be computed explicitly. The
  constant $\delta_\ddd$ is multiplicative in $\ddd$ and satisfies \eqref{eq:lead_const}.
\end{theorem}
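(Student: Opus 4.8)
The plan is to prove Theorem \ref{thm:count_quad_general} by passing to the Dirichlet series counting $\varphi\in\totram(\Lambda)$ weighted by $\absnorm\Delta(\varphi)^{-s}$, together with the indicator of $\ppp_1,\dots,\ppp_\npid$ splitting completely, and then extracting asymptotics by a Tauberian/contour argument. Since $\totram(\Lambda)$ consists of characters $\varphi\colon\Gal(\Fbar/F)\to G=\mu_n$ that are unramified or totally ramified at every $\ppp\nmid n$, the conductor-discriminant formula expresses $\absnorm\Delta(\varphi)$ as a product over the ramified primes, and the family condition decouples the Euler factors at primes away from $n\infty$. Following \cite{MR969545, DanRachel}, I would first set up, for each place $v\mid n\infty$, the local generating function over $\Lambda_v$, and for each $v\nmid n\infty$ a local factor that at an unramified place contributes $1$ and at a totally ramified place contributes $q_v^{-(n-1)s\cdot(\text{something})}$ times the number of relevant local characters; the split-completely condition at $\ppp_i$ simply restricts the local factor there to the trivial (unramified, split) character, which is where the multiplicative constant $\delta_\ddd$ and the formula \eqref{eq:lead_const} come from. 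The key point is that after collecting these, the global series $Z(s)$ factors as a finite product of local factors times a ``main'' Euler product that, up to a holomorphic and boundedly-nonzero correction, agrees with a product of shifted Dedekind-type zeta functions of $F$ (or of the fixed fields of the characters of $G$), so that $Z(s)$ extends meromorphically past its rightmost pole at $s=1/(n-1)$ with a pole only there.

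The second stage is to locate the rightmost pole, compute its residue, and quantify the error. I would show $Z(s)$ has a simple pole at $s_0=1/(n-1)$ with residue proportional to $c_{F,n,\Lambda}$ times $\delta_{\ppp_1\cdots\ppp_\npid}$ — the multiplicativity of $\delta$ is immediate from the Euler-product shape since the local condition at each $\ppp_i$ is imposed independently. To get the stated power-saving error term, I would push the contour to the left of $s_0$: the extent to which one can move depends on a zero-free region or subconvexity input for the auxiliary $L$-functions (Hecke $L$-functions of $F$ and of the relevant subfields attached to characters of $G$), which is exactly where the explicit exponents $a,b$ and the case distinctions $n=2$ vs.\ $n\geq3$, $m=1$ vs.\ $m=2$ vs.\ $m\geq3$ enter: for $n=2$ and small $m$ one has strong subconvexity (Burgess-type) bounds giving the better exponents $13/32$, $153/512$, whereas in general one falls back to a convexity bound, yielding $\min\{1/4,64/(103\phi(n)m)\}$. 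Keeping track of the dependence on $\absnorm(\ppp_1\cdots\ppp_\npid)$ through the contour shift gives the factor $\absnorm(\ppp_1\cdots\ppp_\npid)^{a+\varepsilon}$; here $a$ is governed by the trivial bound on the local factor at $\ppp_i$, which is of size $\ll q_{\ppp_i}^{\,c}$ on the shifted line, and the convexity exponent $1/(2m)$ (resp.\ $3/16$, $103/512$) reflects the quality of the bound available there.

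Carrying this out, the technical heart is the analytic continuation and growth control of $Z(s)$ in vertical strips with explicit polynomial dependence on the $\ppp_i$, combined with a Perron-type truncation and contour shift that converts this into the error term of the theorem. The main obstacle I anticipate is \emph{uniformity}: one needs the meromorphic continuation, the bound on $Z(s)$ in the critical strip, and the Perron integration to all be uniform in the norms $\absnorm(\ppp_i)$, so that the error term genuinely has the shape $\absnorm(\ppp_1\cdots\ppp_\npid)^{a+\varepsilon}X^{(1-b)/(n-1)+\varepsilon}$ rather than an implied constant depending on $\PPP$. This forces one to isolate the $\ppp_i$-dependent local factors explicitly, bound them on the shifted line, and verify that the remaining ``universal'' part of $Z(s)$ (depending only on $F$, $n$, $\Lambda$) is analytically well-behaved independently of $\PPP$. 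A secondary subtlety is making the correction factor between the main Euler product and the product of Hecke $L$-functions genuinely holomorphic and nonvanishing in a half-plane extending past $\Re s = 1/(n-1) - \delta$ for the relevant $\delta$, which requires the linear-disjointness hypothesis (so that the characters of $G$ give distinct, nontrivial Hecke characters of $F$ and no spurious poles or extra main terms — logarithmic or secondary — appear), exactly as flagged in the discussion after Theorem \ref{thm:count_quad}.
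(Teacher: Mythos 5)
There is a genuine gap at the heart of your first stage. You assert that, after applying the conductor--discriminant formula, ``the family condition decouples the Euler factors'' and the global series factors as a finite product of local factors times a main Euler product comparable to shifted Dedekind-type zeta functions of $F$. But the sum defining $D(\Lambda,\PPP;s)$ runs over characters $\varphi\in\Hom(\idele/F^\times,\mu_n)$ (after M\"obius inversion removes surjectivity), and the quotient by the \emph{global} group $F^\times$ prevents any naive factorisation into local Euler factors: $\Hom(\idele/F^\times,\mu_n)$ is not a restricted product of the local character groups. This is exactly why the paper (following Wright and \cite{DanRachel}) inserts a Poisson summation step: the character sum is converted into a \emph{finite} sum, over $x$ in the group $\U_S(n)\subseteq F^\times/F^{\times n}$, of Fourier transforms $\widehat{f}(x;s)$, and it is each $\widehat{f}(x;s)$ -- not $D(\Lambda,\PPP;s)$ itself -- that is an Euler product. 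Moreover, the correct comparison objects are not $L$-functions over $F$ attached to characters of $G$, but Hecke $L$-functions over $F_0=F(\mu_n(\Fbar))$ attached to the Kummer extensions $F_x=F_0(\sqrt[n]{a})$ (Lemmas \ref{lem:loc_fac_artin}--\ref{lem:meromorphic_cont}); only the terms with $F_x=F_0$ produce the pole at $s=1/(n-1)$, which is also where the multiplicativity of $\delta_\ddd$ is actually verified (Lemmas \ref{lem:USprime}, \ref{lem:local_factors_indep}), rather than being ``immediate from the Euler-product shape.'' Without some substitute for this harmonic-analysis step (or an equivalent Kummer-theoretic descent), your continuation, residue computation and uniformity claims do not get off the ground.

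A second, related misattribution concerns the exponents $a$ and $b$. The dependence on $\absnorm(\ppp_1\cdots\ppp_\npid)$ does not come from a ``trivial bound on the local factor at $\ppp_i$'' (that factor is exactly $1/n$); it enters through the \emph{conductors} of the Hecke characters $\psi$ of $F_0$ corresponding to $F_x/F_0$, which are divisible only by places above $S$ and at worst tamely ramified above the $\ppp_i$, giving $\absnorm(\mathfrak{f}_\psi)\ll\absnorm(\ppp_1\cdots\ppp_l)^{\phi(n)}$, and then through conductor-aspect \emph{subconvexity}. In particular, the case $n\geq 3$ or $m\geq 3$ is not ``a fallback to convexity'': the exponent $64/(103\phi(n)m)$ comes from Wu's hybrid Burgess-type bound with $\theta=7/64$ (i.e.\ $\alpha=103/256$), the same input as in the $n=2$, $m=2$ case; the case distinctions arise from the optimisation of $T$ and the contour shift, and, for $n\geq3$, from the need to replace truncated Perron by the log-weighted counting function $\tilde{N}_{\totram}(\Lambda,\PPP;X)$ plus a differencing argument (which also consumes a factor of the saving, whence $b$ versus $2b$). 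You would also need the coefficient bound $a_m\ll_\varepsilon m^\varepsilon$ (via Lemma \ref{lem:unram}) to run the truncated Perron formula in the $n=2$ case.
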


With the choice $\Lambda_v := \Hom(\Gal(\Fbar_v/F_v),G)$ for all $v\mid n\infty$, we
get $N_{\totram}(\Lambda, \PPP; X) = \phi(n)N_{\totram}(\PPP;X)$. Thus, with
the observation that
\begin{equation}\label{eq:a_b_bounds}
  a\leq 1/(2m)\quad\text{ and }\quad b>
  \begin{cases}
    1/(4\phi(n)) &\text{ if }m=1,\\
    1/(2m\phi(n)) &\text{ if }m\geq 2,    
  \end{cases}
\end{equation}
one sees immediately that Theorem \ref{thm:count_quad_general} implies Theorem
\ref{thm:count_quad}. But our more general setup allows us to count other
interesting families as well, for example the family of all cyclic extensions
of degree $n$ of $F$, in which every tamely ramified prime ideal is totally
ramified, or the family in which every ramified prime ideal is totally ramified.

For the proof of Theorem \ref{thm:count_quad_general}, we define a function  $f(\PPP;\varphi):=\prod_vf_v(\PPP;\varphi_v)$ on $\Hom(\Gal(\Fbar/F), G)$
locally by
\begin{equation*}
  f_v(\PPP; \varphi_v):=
  \begin{cases}
    1&\text{ if }v\mid n\infty\text{ and }\varphi_v\in\Lambda_v,\\
    1&\text{ if }v\in\PPP\text{ and }\varphi_v=1,\\
    1&\text{ if }v\notin \PPP,\ v\nmid n\infty, \text{ and }e(\varphi_v)\in\{1,n\},\\
    0&\text{ otherwise.}
  \end{cases}
\end{equation*}
With this definition, $f(\PPP;\varphi)=1$ if and only if
$\varphi\in\totram(\Lambda)$ and $\ppp_1,\ldots,\ppp_l$ split completely in $K_\varphi/F$.  Thus, writing $\Delta(\varphi):=\Delta(K_\varphi/F)$, the
Dirichlet series corresponding to $N_{\totram}(\Lambda,\PPP; X)$ is
\begin{equation*}
  D(\Lambda,\PPP; s) := \sum_{\varphi\in\gext(F)}\frac{f(\PPP;\varphi)}{\absnorm(\Delta(\varphi))^s}.
\end{equation*}

\begin{proposition}\label{prop:ds_merom}
  The Dirichlet series $D(\Lambda,\PPP;s)$ converges absolutely in the half-plane $(n-1)\Re(s)>1$. It has a
  meromorphic continuation to the half-plane $(n-1)\Re(s)>1/2$. The only pole in this
  half-plane is a simple pole at $s=1/(n-1)$. The residue has the form
  $\delta_{\ppp_1\cdots\ppp_l}c_{F,n,\Lambda}(n-1)^{-1}$, as in Theorem \ref{thm:count_quad_general}. Let
  \begin{equation*}
    \alpha :=
    \begin{cases}
      3/8 &\text{ if }F=\QQ \text{ and }n=2\\
      103/256 &\text{ otherwise.}
    \end{cases}
  \end{equation*}
  Then, for any $\eta\in
  (0,1)$ and $\varepsilon>0$, we have the estimate
  \begin{equation}\label{eq:ds_estimate}
    \frac{|s-\frac{1}{n-1}|}{\abs{s}}D(\Lambda,\PPP;s) \ll_{F,n,l,\eta,\varepsilon}
    \left(\absnorm(\ppp_1\cdots\ppp_\npid)(1+|\Im s|)^{m}\right)^{\phi(n)\alpha(1+\eta-(n-1)\Re
        s)+\varepsilon}
  \end{equation}
  in the vertical strip $1/2+\varepsilon\leq(n-1)\Re(s)<1+\eta$.
\end{proposition}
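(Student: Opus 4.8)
The plan is to factor $D(\Lambda,\PPP;s)$ as an Euler product over the places of $F$ and compare it with a suitable power of the Dedekind zeta function $\zeta_F$. Concretely, for each place $v$ we compute the local factor $D_v(s) := \sum_{\varphi_v}f_v(\PPP;\varphi_v)\absnorm(\Delta(\varphi_v))^{-s}$, where the sum runs over $\Hom(\Gal(\Fbar_v/F_v),G)$; since the global conductor-discriminant data of $\varphi$ is the product of the local ones, and $f$ is by construction multiplicative, we get $D(\Lambda,\PPP;s)=\prod_v D_v(s)$ on the region of absolute convergence. For a non-archimedean $v\nmid n$ not in $\PPP$, exactly the homomorphisms with $e(\varphi_v)\in\{1,n\}$ contribute: the trivial one contributes $1$, and a totally ramified $\varphi_v$ contributes $\absnorm(\ppp_v)^{-s(n-1)}$ by the tame conductor-discriminant formula, with the number of such $\varphi_v$ depending on whether $q_v\equiv 1\bmod n$. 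One then checks $D_v(s)=\zeta_{F,v}((n-1)s)\cdot(1+O(q_v^{-2(n-1)\Re s}))$ for such $v$, which is where the absolute convergence for $(n-1)\Re(s)>1$ and the pole structure of the quotient $D(\Lambda,\PPP;s)/\zeta_F((n-1)s)$ come from. The residue computation at $s=1/(n-1)$ is then an explicit evaluation of the product of the correction factors against $\Res_{s=1}\zeta_F(s)$, giving the stated $\delta_{\ppp_1\cdots\ppp_l}c_{F,n,\Lambda}(n-1)^{-1}$; multiplicativity of $\delta_\ddd$ is immediate from the Euler product, and formula \eqref{eq:lead_const} drops out from the split-completely local factor at $\ppp_i$.

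The meromorphic continuation to $(n-1)\Re(s)>1/2$ and the key growth estimate \eqref{eq:ds_estimate} are the heart of the matter. Write $D(\Lambda,\PPP;s) = \zeta_F((n-1)s)\cdot E(\Lambda,\PPP;s)$, where $E$ is the Euler product of the correction factors. The product defining $E$ converges absolutely only for $(n-1)\Re(s)>1/2$, and there it is holomorphic and bounded polynomially in $\Im s$ and in $\absnorm(\ppp_1\cdots\ppp_\npid)$; in that half-plane $\zeta_F((n-1)s)$ is holomorphic except possibly on the line $(n-1)\Re s = 1$, where convexity/subconvexity bounds for $\zeta_F$ — this is the source of the exponents $3/8$ versus $103/256$, the latter being (roughly) the subconvexity exponent for Dedekind zeta of a degree-$m$ field in the relevant aspect — control the growth. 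The factor $\tfrac{|s-1/(n-1)|}{|s|}$ is inserted precisely to kill the pole at $s=1/(n-1)$ and the possible zero/normalisation issue at $s=0$, turning the bound into a clean statement on the closed strip. To get the exponent $\phi(n)\alpha(1+\eta-(n-1)\Re s)+\varepsilon$ one interpolates (Phragmén–Lindelöf) between the edge $(n-1)\Re s = 1+\eta$, where everything is bounded by an absolutely convergent product (exponent $0$), and the edge $(n-1)\Re s = 1/2+\varepsilon$, where $\zeta_F((n-1)s)$ contributes its subconvex bound; the $\phi(n)$ appears because $\absnorm(\Delta(\varphi))$ relates to the conductor of $\varphi$ by a power involving $\phi(n)=\card\Aut(G)$, so the $\ppp_i$-dependence in $E$ enters with that multiplicity, and the $(1+|\Im s|)^m$ reflects that $\zeta_F$ has degree $m$ over $\QQ$.

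I would carry this out in the order: (i) establish the Euler factorisation and compute each $D_v$, distinguishing $v\mid n\infty$ (finite product, bounded contribution, absorbed into $c_{F,n,\Lambda}$ and into the implied constant), $v\in\PPP$, and generic $v$; (ii) extract $\zeta_F((n-1)s)$ and show the remaining Euler product $E$ converges absolutely for $(n-1)\Re s>1/2$ with the claimed polynomial bound in $\absnorm(\ppp_1\cdots\ppp_\npid)$ and $|\Im s|$ — here one must track the $\ppp_i$-dependence carefully, bounding the local factor at $\ppp_i$ by $1+O(\absnorm(\ppp_i)^{-(n-1)\Re s})$ uniformly; (iii) invoke the relevant (sub)convexity bound for $\zeta_F$ on the critical-ish line to handle the edge $(n-1)\Re s = 1/2+\varepsilon$; (iv) interpolate via Phragmén–Lindelöf across the strip, multiply by $\tfrac{|s-1/(n-1)|}{|s|}$ to clear the pole, and read off \eqref{eq:ds_estimate}; (v) compute the residue explicitly. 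The main obstacle I anticipate is step (ii)–(iii): getting the $\absnorm(\ppp_1\cdots\ppp_\npid)$-dependence in \eqref{eq:ds_estimate} to come out with exponent exactly $\phi(n)\alpha(1+\eta-(n-1)\Re s)$ — matching the $\Im s$-exponent up to the factor $m$ — requires that the local factors at the $\ppp_i$ be estimated with the same convexity-interpolation as the $\Im s$-aspect, which forces one to set up the Phragmén–Lindelöf argument jointly in the two aspects rather than separately, and to be careful that the auxiliary function used for interpolation is holomorphic and of finite order in $s$ on the whole strip.
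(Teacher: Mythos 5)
Your first step already fails: $D(\Lambda,\PPP;s)$ is a sum over \emph{global} homomorphisms $\varphi\in\gext(F)$, i.e.\ (after class field theory and M\"obius inversion to remove surjectivity) over characters of the idele class group $\idele/F^\times$, and this index set is a proper subgroup of the restricted product of the local character groups. Multiplicativity of $f$ and of the discriminant over places therefore does \emph{not} give $D(\Lambda,\PPP;s)=\prod_v D_v(s)$; not every collection of local $\varphi_v$ glues to a global $\varphi$. This is exactly why the paper's proof needs the Poisson summation formula of Frei--Loughran--Newton \cite[Prop.\ 3.8]{DanRachel}: the constraint of triviality on $F^\times$ is resolved by a dual sum, expressing $F_n(\PPP;s)$ as a finite sum over $x\in\U_S(n)\subseteq F^\times/F^{\times n}$ of Fourier transforms $\widehat f(x;s)$, and it is only each $\widehat f(x;s)$ that is an Euler product. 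Without this step your factorisation, and everything built on it, is unjustified.

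The second structural problem is the comparison object. For $n\geq 3$ the generic local factor is nontrivial only at primes with $q_v\equiv 1\bmod n$ (density $1/\phi(n)$), so the quotient of (any correct version of) the Euler products by $\zeta_F((n-1)s)$ does \emph{not} converge absolutely down to $(n-1)\Re s>1/2$; the paper instead matches each $\widehat f(x;s)$ with a Hecke $L$-function $L(F_x/F_0,\chi,(n-1)s)$ over $F_0=F(\mu_n(\Fbar))$, a field of degree $m\phi(n)$, with $F_x=F_0(\sqrt[n]{a})$. This is where the whole quantitative content of \eqref{eq:ds_estimate} lives and where your sketch cannot recover it: (a) the pole structure comes from the single case $F_x=F_0$, giving $\zeta_{F_0}((n-1)s)$, and the residue is computed from the explicit local factors $1+\phi(n)q_v^{-(n-1)s}$, not from $\Res_{s=1}\zeta_F$; (b) the $\absnorm(\ppp_1\cdots\ppp_l)$-dependence does not come from local factors of $D$ at the $\ppp_i$ (those are just $1/n$, by \eqref{eq:local_factor_p}) but from the \emph{conductors} of the dual-side Hecke characters $\psi$ of $F_0$, since $F_x/F_0$ may ramify (tamely) above the $\ppp_i$ for $x\in\U_S(n)$, giving $\absnorm(\mathfrak{f}_\psi)\ll\absnorm(\ppp_1\cdots\ppp_l)^{\phi(n)}$ --- this is the true source of the exponent $\phi(n)$, not a conductor--discriminant power of $\card\Aut(G)$; and (c) the values $\alpha=103/256$ (resp.\ $3/8$) are Wu's hybrid conductor--and--$t$-aspect subconvexity bound for Hecke $L$-functions over $F_0$ with the Ramanujan exponent $\theta=7/64$ (resp.\ the Burgess bound for Dirichlet $L$-functions when $F=\QQ$, $n=2$), applied together with Phragm\'en--Lindel\"of; subconvexity for the fixed-conductor $\zeta_F$ alone, as you propose, yields no $\ppp$-aspect at all and the wrong $t$-exponent (the $m\phi(n)$ in the bound reflects the degree of $F_0$, not of $F$). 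So the gap is not a matter of carrying out your step (ii)--(iii) more carefully: the Poisson-summation/dual-$L$-function mechanism is the missing idea needed both for the meromorphic continuation past $(n-1)\Re s=1$ and for the stated dependence on $\ppp_1,\ldots,\ppp_l$.
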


We will first prove Proposition \ref{prop:ds_merom} with help of the techniques from
\cite{DanRachel} and then deduce Theorem \ref{thm:count_quad} from it via
Perron's formula and tauberian arguments.

\section{Proof of Proposition \ref{prop:ds_merom}: Analysis of the discriminant
zeta function}

The aim of this section is to prove Proposition \ref{prop:ds_merom}. In
\S\ref{sec:setup}, we apply class field theory, M\"obius inversion and a
version of the Poisson summation formula to express $D(\Lambda,\PPP;s)$ as a
sum of Euler products. The arguments are very similar to \cite{DanRachel}, so
we will be concise. In \S\ref{sec:local_factors}, we analyse these Euler
products and show that they behave like certain Artin $L$-functions. This will
yield a meromorphic continuation of $D(\Lambda,\PPP;s)$. In
\S\ref{sec:vertical}, we deduce \eqref{eq:ds_estimate} from subconvex bounds of
our Artin $L$-functions, and in \S\ref{sec:residue}, we determine the residue of
$D(\Lambda,\PPP;s)$ at $s=1/(n-1)$.

\subsection{Preliminaries}\ 

\begin{lemma}\label{lem:unram}
  Let $F$ be a number field, $n>1$ an integer and $S$ a finite set of places of $F$ containing all places dividing $n\infty$. Then the number of cyclic extensions $K/F$ with $[K:F]=n$ that are unramified at all places not in $S$ is $\ll_{F,n} n^{[F(\mu_n(\Fbar)):F]\cdot \card S}$.
\end{lemma}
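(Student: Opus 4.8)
The plan is to translate the count into a count of continuous homomorphisms $G_F:=\Gal(\Fbar/F)\to\ZZ/n\ZZ$, and then to estimate the latter by Kummer theory over the cyclotomic field $L:=F(\mu_n(\Fbar))$. First, a cyclic extension $K/F$ of degree $n$ that is unramified outside $S$ corresponds under the Galois correspondence to the open subgroup $\Gal(\Fbar/K)\leq G_F$, which has index $n$, cyclic quotient $\Gal(K/F)$, and contains the inertia subgroup $I_v$ for every place $v\notin S$. Distinct $K$ give distinct subgroups, and each such subgroup arises as the kernel of a continuous surjection $\varphi\colon G_F\to\ZZ/n\ZZ$ with $\varphi(I_v)=0$ for all $v\notin S$ (the map $\varphi\mapsto\Fbar^{\ker\varphi}$ from such surjections to such $K$ is onto, with fibres of size $\phi(n)$). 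Hence it suffices to bound the number $M_S(F)$ of continuous homomorphisms $G_F\to\ZZ/n\ZZ$ that are unramified outside $S$.

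Next I would descend to $L$. It is abelian over $F$ with $e:=[L:F]$ dividing $\phi(n)$, and it is unramified outside $S$ because $S$ contains all places dividing $n\infty$. A homomorphism $G_F\to\ZZ/n\ZZ$ killing $G_L$ factors through $G_F/G_L=\Gal(L/F)$, so the kernel of the restriction map $\Hom(G_F,\ZZ/n\ZZ)\to\Hom(G_L,\ZZ/n\ZZ)$ has size at most $\card\Gal(L/F)=e$. The restriction of an $S$-unramified homomorphism is unramified outside the set $S_L$ of places of $L$ lying above $S$ (if $w\notin S_L$ then its restriction $v$ to $F$ lies outside $S$, and $I_w\subseteq I_v$), and $\card S_L\leq e\,\card S$. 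Therefore $M_S(F)\leq e\cdot M_{S_L}(L)$.

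For the final step, since $\mu_n\subseteq L$ a choice of isomorphism $\ZZ/n\ZZ\cong\mu_n$ together with Kummer theory identifies the $S_L$-unramified homomorphisms $G_L\to\ZZ/n\ZZ$ with the finite group
\[
  L(S_L,n):=\bigl\{\,\overline{b}\in L^\times/(L^\times)^n\ :\ \ord_w(b)\equiv 0\bmod n\text{ for all }w\notin S_L\,\bigr\},
\]
where one uses that for $w\nmid n$ the homomorphism attached to $\overline b$ is unramified at $w$ exactly when $n\mid\ord_w(b)$. The standard exact sequence
\[
  1\longrightarrow \OO_{L,S_L}^{\times}/(\OO_{L,S_L}^{\times})^n\longrightarrow L(S_L,n)\longrightarrow \Cl_{S_L}(L)[n]\longrightarrow 1,
\]
combined with Dirichlet's $S$-unit theorem (the $S_L$-unit group has free rank $\card S_L-1$), gives $M_{S_L}(L)=\card L(S_L,n)\leq n^{\card S_L}h_L$, where $h_L$ is the class number of $L$. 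Putting the three steps together, the number of cyclic degree-$n$ extensions of $F$ unramified outside $S$ is at most $e\,h_L\,n^{\card S_L}\leq e\,h_L\,n^{e\,\card S}$, and since $e=[F(\mu_n(\Fbar)):F]$ and $h_L$ depend only on $F$ and $n$, this is $\ll_{F,n}n^{[F(\mu_n(\Fbar)):F]\card S}$.

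I do not expect a serious obstacle: this is a routine Kummer-theory estimate. The points needing care are all bookkeeping — checking that restriction to $G_L$ sends ``unramified outside $S$'' to ``unramified outside $S_L$'' and that $\card S_L\leq e\,\card S$, and the local computation identifying $L(S_L,n)$. The one mild subtlety is that to obtain precisely the exponent $[F(\mu_n(\Fbar)):F]\card S$ one must use the sharp $S$-unit rank $\card S_L-1$; cruder bounds on the local character groups $\Hom(L_w^{\times},\ZZ/n\ZZ)$ would produce a larger exponent.
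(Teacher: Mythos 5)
Your proof is correct, and it is essentially the paper's argument: both reduce to Kummer theory over $F(\mu_n(\Fbar))$ and bound the resulting Kummer classes by the $S$-unit group of rank $\card S_L-1$ (up to torsion and class-group contributions). The only differences are bookkeeping — you descend from $F$ to $L=F(\mu_n(\Fbar))$ at the level of characters via the restriction map (whose kernel has size at most $[L:F]$), where the paper passes from $K$ to $K(\mu_n(\Fbar))$ and notes that such a cyclic extension has $\ll_{F,n}1$ subfields, and you absorb the class group through the exact sequence with $\Cl_{S_L}(L)[n]$, where the paper instead enlarges $S$ once and for all so that $\OO_S$ is a principal ideal domain.
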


\begin{proof}
  Consider first the case where $\mu_n(\Fbar)\subseteq F$. We may assume, without loss of generality, that $S$ contains enough places to ensure that the ring of $S$-integers $\OO_S$  of $F$ is a principal ideal domain.

  By Kummer theory, cyclic extensions $K/F$ of degree $n$ are of the form
  $K=F(\sqrt[n]{a})$, with $a\in F^\times/F^{\times n}$. For finite places
  $\ppp\notin S$, the extension $F(\sqrt[n]{a})/F$ is unramified if and only if
  $n$ divides the exponential $\ppp$-adic valuation $\ord_\ppp(a)$. Consider the exact sequence
  \begin{equation}\label{eq:quad_ext_exact}
    1\to \OO_{S}^\times \to F^\times \to \prod_{\ppp\notin S}\ZZ \to 1,
  \end{equation}
  where the third map is given by the exponential
  valuations at $\ppp\notin S$. Taking the tensor product with $\ZZ/n\ZZ$, we get
  \begin{equation*}
    \OO_{S}^\times/\OO_{S}^{\times n} \to F^\times/F^{\times n} \to
    \prod_{\ppp\notin S}\ZZ/n\ZZ\to 1,
  \end{equation*}
  so the number of possible values of $a\in F^\times/F^{\times n}$ is bounded by
  \begin{equation*}
    \card\OO_{S}^\times/\OO_{S}^{\times n} \ll_{F,n} n^{\card S}.
  \end{equation*}

  Now we consider the general case. For every $K$ as in the lemma, we get a
  cyclic extension $K(\mu_n(\Fbar))/F(\mu_n(\Fbar))$ of degree at most $n$,
  which is unramified at all places not lying above places in $S$. By what we
  proved above, the number of such extensions of $F(\mu_n(\Fbar))$ is
  $\ll_{F,n}n^{[F(\mu_n(\Fbar)):F]\cdot \card S}$. The lemma follows, since each
  cyclic extension field of $F(\mu_n(\Fbar))$ of degree bounded by $n$ has $\ll_{F,n} 1$ subfields.
\end{proof}

\subsection{Set-up and Poisson summation}\label{sec:setup}
We follow the strategy of \cite[\S 4]{DanRachel} with our
$f(\cdot)=f(\PPP;\cdot)$, additionally keeping track of the dependence on
$\ppp_1,\ldots,\ppp_\npid$. During this proof, all implicit constants in $O$-
and $\ll$-notation may always depend on $F,n,l$, but not on
$\ppp_1,\ldots,\ppp_l$. Since there are only finitely many possibilities for
$\Lambda$, once $n$ and $F$ are fixed, the implicit constants are also
independent of $\Lambda$.

Since $f(\PPP;\varphi)\leq 1$ for all $\varphi\in\gext(F)$, it is clear from Wright's result \cite{MR969545} that $D(\Lambda,\PPP;s)$ converges absolutely whenever $\Re s$ is large enough. 

Let $\idele$ be the idele group of $F$. Since $G=\mu_n$ is abelian,
global class field theory allows us to identify the groups
$\Hom(\Gal(\Fbar/F),G)$ and $\Hom(\idele/F^\times,G)$, and we
interpret $f$ as a function on the latter group. Via the natural
embedding $F_v^\times\subseteq\idele$, each
$\varphi\in\Hom(\idele/F^\times,G)$ induces local homomorphisms
$\varphi_v\in\Hom(F_v^\times,G)$ corresponding via local class field
theory to elements of $\Hom(\Gal(\Fbar_v/F_v),G)$. Thus, we may
describe the local factors of $f$ by
\begin{equation*}
  f_v(\PPP; \varphi_v)=
  \begin{cases}
    1&\text{ if }v\mid n\infty\text{ and }\varphi_v\in\Lambda_v,\\
    1&\text{ if }v\in\PPP\text{ and }\varphi_v=1,\\
    1&\text{ if }v\notin \PPP,\ v\nmid n\infty,\text{ and } [\OO_v^\times : \OO_v^\times \cap \ker(\varphi_v)]\in\{1,n\},\\
    0&\text{ otherwise.}
  \end{cases}
\end{equation*}

The following derivations are a direct application of \cite[\S 2]{DanRachel} to
the present situation. By the conductor-discriminant formula and M\"obius
inversion to remove the surjectivity condition, we get (cf. \cite[Lemma
2.2]{DanRachel})
\begin{equation}\label{eq:moebius}
  D(\Lambda,\PPP;s) = \sum_{d\mid n}\mu(n/d)F_{d}(\PPP;sn/d),
\end{equation}
where
\begin{equation*}
  F_d(\PPP;s):=\sum_{\varphi \in \Hom(\idele/F^\times,\  \mu_d)}\frac{f(\varphi)}{\Phi_d(\varphi)^{s}},
\end{equation*}
with
\begin{equation}
  \Phi_d(\varphi) := \prod_{a\bmod d}\Phi(\varphi^a),
\end{equation}
and $\Phi(\psi)$ the reciprocal of the idelic norm of the conductor of the character $\psi$.

If $d$ is a proper divisor of $n$ and $\varphi\in\Hom(\idele/F^\times,\  \mu_d)$, then $f(\varphi)=0$ unless the corresponding extension $K_\varphi$ is unramified at all places $v$ not dividing $n\infty$. By Lemma \ref{lem:unram}, this occurs for at most $\ll 1$ extensions. Thus, $F_d(\PPP;s)$ is a finite sum and entire. Moreover,
\begin{equation}\label{eq:Fd_bound}
  F_d(\PPP;s) \ll 1 \quad\text{ for } \quad\Re s\geq 0,\quad\text{ if }d\neq n.
\end{equation}

Thus, the analytic behavior of $D(\Lambda,\PPP;s)$ is determined by $F_n(\PPP;s)$.  We
let $S:=S' \cup \PPP$, where $S'$ is the set of places of $F$ dividing
$n\infty$.

By \cite[Proposition 3.8]{DanRachel}, a version of the Poisson
summation formula adapted to the present situation, the series $F_n(\PPP;s)$
has the form
\begin{equation}\label{eq:poisson}
  F_n(\PPP;s) = \frac{1}{\card\OO_F^\times/\OO_F^{\times n}}\sum_{x\in
    F^\times/F^{\times n}}\widehat{f}(x;s)\quad\text{ for }\Re s\gg 1,
\end{equation}
where $\widehat{f}(x;s)$ is a Fourier transform defined in \cite[\S
3.3]{DanRachel}. Denote by $x_v$ the image of $x\in F^\times/F^{\times n}$
under the natural map to $F_v^\times/F_v^{\times n}$. Using the observation
that $f_v(\PPP;\cdot)$ is invariant under $\Hom(F_v^\times/\OO_v^\times,\mu_n)$
for all $v\notin S$ and \cite[Lemma 3.6]{DanRachel}, we see that the sum in
\eqref{eq:poisson} extends in fact only over the finite group
\begin{equation*}
\U_S(n) := \{x\in F^\times/F^{\times n}\ : \ x_v\in
\OO_v^\times/\OO_v^{\times n} \text{ for all }v\notin S\}.
\end{equation*}
The Fourier transform $\widehat{f}(x;s)$ is an Euler product
$\widehat{f}(x;s)=\prod_v\widehat{f}_{v}(x_v;s)$, whose local factors at
$v\nmid n\infty$ we now describe explicitly. As in \cite[\S 3.3]{DanRachel}, we define for
$\chi_v\in\Hom(\OO_v^\times,\mu_n)$ and $x_v\in F_v^\times/F_v^{\times n}$ the average
\begin{equation*}
  \tau_{f_{v}}(\chi_v,x_v):=\frac{1}{n}\sum_{\psi_v\in\Hom(F_v^\times/\OO_v^\times,\
    \mu_n)}f_v(\PPP; \chi_v\psi_v)\psi_v(x_v).
\end{equation*}
Here we identified
$\Hom(F_v^\times,\mu_n)=\Hom(\OO_v^\times,\mu_n)\oplus\Hom(F_v^\times/\OO_v^{\times},\mu_n)$
by the choice of a uniformiser. By \cite[Lemma 3.3]{DanRachel}, we get the formula
\begin{equation}\label{eq:local_factor_formula}
  \widehat{f}_{v}(x_v;s) = \sum_{m\mid
    (n, q_v-1)}\left(\sum_{\substack{\chi_v\in\Hom(\OO_v^\times,\ \mu_n)\\\ker(\chi_v)=\OO_v^{\times m}}}\tau_{f_v}(\chi_v,x_v)\chi_v(x_v)\right)q_v^{-n(1-1/m)s},
\end{equation}
where we wrote $(n,q_v-1)$ for the greatest common divisor. For $v\in\PPP$, we clearly have
\begin{equation*}
  \tau_{f}(\chi_v,x_v)=
  \begin{cases}
    \frac{1}{n}&\text{ if }\chi_v=1,\\
    0&\text{ otherwise, }
  \end{cases}
\end{equation*}
and thus
\begin{equation}\label{eq:local_factor_p}
  \widehat{f}_{v}(x_v;s) =\frac{1}{n}\quad\text{ for }\quad v\in\PPP. 
\end{equation}
At the remaining places not dividing $n\infty$, we find the following situation.
\begin{lemma}\label{lem:local_factors_comp}
  Let $v\notin \PPP$, $v\nmid n$ be a non-archimedean place of $F$ and
  $x_v\in\OO_v^\times/\OO_v^{\times n}$. Let $d_v(x_v)$ be the largest divisor $d$
  of $n$, for which $x_v\in\OO_v^{\times d}/\OO_v^{\times n}$. Then
  \begin{equation*}
    \widehat{f}_v(x_v;s) =
    \begin{cases}
      1 & \text{ if } q_v\not\equiv 1\bmod n,\\
      1 + \left(\sum_{d\mid d_v(x_v)}\mu(n/d)d\right)q_v^{-(n-1)s} & \text{
        if }q_v\equiv 1\bmod n.
    \end{cases}
  \end{equation*}
\end{lemma}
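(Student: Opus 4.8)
The plan is to evaluate the local Fourier transform $\widehat{f}_v(x_v;s)$ by directly unwinding the formula \eqref{eq:local_factor_formula} together with the definition of $\tau_{f_v}(\chi_v,x_v)$, using the explicit shape of $f_v(\PPP;\cdot)$ at places $v\notin\PPP$, $v\nmid n$. First I would note that when $q_v\not\equiv 1\bmod n$ the only divisor $m$ of $(n,q_v-1)$ is $m=1$, so the sum in \eqref{eq:local_factor_formula} collapses to the single term $m=1$, in which the only character is $\chi_v=1$; then $\tau_{f_v}(1,x_v)=\tfrac1n\sum_{\psi_v}f_v(\PPP;\psi_v)\psi_v(x_v)$. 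Since $f_v(\PPP;\psi_v)=1$ for all $\psi_v\in\Hom(F_v^\times/\OO_v^\times,\mu_n)$ in this case (because $e(\psi_v)\in\{1,n\}$ automatically when the ramification is tame and $\mu_n\not\subseteq F_v$ forces — wait, more carefully: $f_v=1$ precisely when $[\OO_v^\times:\OO_v^\times\cap\ker\psi_v]\in\{1,n\}$, and for $\psi_v$ trivial on $\OO_v^\times$ this index is $1$), the average is $\tfrac1n\sum_{\psi_v}\psi_v(x_v)$, which is $1$ if $x_v$ lies in the kernel of all such $\psi_v$ and $0$ otherwise; but $x_v\in\OO_v^\times/\OO_v^{\times n}$ means $x_v$ is trivially paired with every $\psi_v\in\Hom(F_v^\times/\OO_v^\times,\mu_n)$, so the average is $\tfrac1n\cdot n=1$, giving $\widehat{f}_v(x_v;s)=1$ after multiplying by $\chi_v(x_v)=1$. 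I should double-check the subtle point that $f_v$ does not vanish here; this is where the hypothesis $v\nmid n$ (tame ramification, so every ramified degree-$d$ subextension has $e=d$) is used.

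Next, for $q_v\equiv 1\bmod n$, I would split \eqref{eq:local_factor_formula} into the $m=1$ contribution, which by the same computation as above equals $1$, plus the contributions from $m\mid n$, $m>1$. For each such $m$, the inner sum runs over $\chi_v\in\Hom(\OO_v^\times,\mu_n)$ with $\ker\chi_v=\OO_v^{\times m}$ exactly; there are $\phi(m)$ such characters (the faithful characters of the cyclic group $\OO_v^\times/\OO_v^{\times m}\cong\ZZ/m\ZZ$). For fixed such $\chi_v$, I compute $\tau_{f_v}(\chi_v,x_v)=\tfrac1n\sum_{\psi_v}f_v(\PPP;\chi_v\psi_v)\psi_v(x_v)$; one checks that $e(\chi_v\psi_v)$ (equivalently the index $[\OO_v^\times:\OO_v^\times\cap\ker(\chi_v\psi_v)]$) equals $m$ regardless of $\psi_v$, so $f_v(\PPP;\chi_v\psi_v)\in\{0,1\}$ according to whether $m\in\{1,n\}$. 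Hence for $1<m<n$ we get $\tau_{f_v}=0$ and only $m=n$ survives, while for $m=n$ we get $\tau_{f_v}(\chi_v,x_v)=\tfrac1n\sum_{\psi_v}\psi_v(x_v)=\tfrac1n\cdot n=1$ (again $x_v$ pairs trivially with all $\psi_v$). Thus the $m=n$ block contributes $\big(\sum_{\chi_v:\ker\chi_v=\OO_v^{\times n}}\chi_v(x_v)\big)q_v^{-(n-1)s}$, and it remains to identify the character sum $\sum_{\chi_v}\chi_v(x_v)$, over the $\phi(n)$ faithful characters of $\OO_v^\times/\OO_v^{\times n}\cong\ZZ/n\ZZ$, with $\sum_{d\mid d_v(x_v)}\mu(n/d)d$.

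For that last identification I would use Möbius inversion on the subgroup lattice: writing $G_v:=\OO_v^\times/\OO_v^{\times n}\cong\ZZ/n\ZZ$ and letting $x_v$ have order $e$ in $G_v$, one has $\sum_{\chi\in\widehat{G_v},\ \ker\chi=\OO_v^{\times d}}\chi(x_v)=\sum_{\text{ord}(\chi)=d}\chi(x_v)$, and summing over all $\chi$ of order dividing $d$ gives $d$ if $e\mid d$ and $0$ otherwise; Möbius inversion then yields $\sum_{\text{ord}(\chi)=n}\chi(x_v)=\sum_{d\mid n}\mu(n/d)\sum_{\text{ord}(\chi)\mid d}\chi(x_v)=\sum_{d\mid n,\ e\mid d}\mu(n/d)d$. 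Finally I would observe that the condition ``$e\mid d$'' is exactly ``$d\mid d_v(x_v)$'' after a clean-up: $d_v(x_v)$, the largest $d\mid n$ with $x_v\in\OO_v^{\times d}/\OO_v^{\times n}$, equals $n/e$ where $e$ is the order of $x_v$ in $G_v$, and the divisors $d$ of $n$ with $e\mid d$ are precisely the divisors of $n/e=d_v(x_v)$. Substituting $d'=d$ (or reindexing as needed) gives $\sum_{d\mid d_v(x_v)}\mu(n/d)d$, completing the proof; the only genuinely delicate point throughout is keeping the ``$f_v=1$ iff $e(\varphi_v)\in\{1,n\}$'' condition straight and confirming it forces $\tau_{f_v}$ to vanish for intermediate $m$, which is what makes the single clean $q_v^{-(n-1)s}$ term appear.
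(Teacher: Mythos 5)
Your proposal follows essentially the same route as the paper: unwind \eqref{eq:local_factor_formula}, observe that $\psi_v(x_v)=1$ for unit classes $x_v$ so that $\tau_{f_v}(\chi_v,x_v)=f_v(\PPP;\chi_v)$, conclude that only the blocks $m=1$ and $m=n$ survive because $f_v$ vanishes unless the index is in $\{1,n\}$, and finally evaluate the sum over faithful characters by inclusion--exclusion. The skeleton is correct, but two intermediate claims are false as written. First, in the case $q_v\not\equiv 1\bmod n$ you assert that the only divisor $m$ of $(n,q_v-1)$ is $m=1$; this fails whenever $n$ is composite (e.g.\ $n=4$, $q_v\equiv 3\bmod 4$ gives $(n,q_v-1)=2$). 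The conclusion $\widehat f_v(x_v;s)=1$ is still correct, but for the right reason, namely the one you yourself use in the other case: for every $m\mid(n,q_v-1)$ with $1<m<n$ one has $f_v(\PPP;\chi_v\psi_v)=0$, so $\tau_{f_v}=0$ and those blocks drop out; since $(n,q_v-1)<n$, no $m=n$ block exists and only $m=1$ contributes.

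Second, in the character-sum identification your orthogonality statement is reversed. With $G_v=\OO_v^\times/\OO_v^{\times n}\cong\ZZ/n\ZZ$ and $e$ the order of $x_v$, the characters of order dividing $d$ are exactly those trivial on $G_v^{\,d}$, so $\sum_{\mathrm{ord}(\chi)\mid d}\chi(x_v)=d$ precisely when $x_v\in G_v^{\,d}$, i.e.\ when $e\mid n/d$ (equivalently $d\mid n/e=d_v(x_v)$) --- not when $e\mid d$. Your subsequent ``clean-up'' claim that the divisors $d$ of $n$ with $e\mid d$ coincide with the divisors of $n/e$ is likewise false (for $n=4$, $e=2$ these sets are $\{2,4\}$ and $\{1,2\}$); the two slips cancel and you land on the correct formula $\sum_{d\mid d_v(x_v)}\mu(n/d)d$, but the derivation as stated does not hold. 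The fix is simply to keep the condition in the form ``$x_v\in\OO_v^{\times d}/\OO_v^{\times n}$'', i.e.\ $d\mid d_v(x_v)$, throughout the M\"obius inversion, which is exactly how the paper phrases its inclusion--exclusion step. With these two local corrections your argument coincides with the paper's proof.
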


\begin{proof}
The $\Hom(k_v^\times/\OO_v^\times,\ \mu_n)$-invariance of
$f_v(\PPP,\cdot)$ implies that $\tau_{f}(\chi_v,x_v)=f_v(\PPP; \chi_v)$. Recall
the definition of $f_v(\PPP,\cdot)$ and the fact that $[\OO_v^\times :
\OO_v^{\times m}] = [F(v)^\times : F(v)^{\times m}] = m$ whenever $m\mid q_v-1$, where $F(v)$ is the
residue field. Thus, we get from \eqref{eq:local_factor_formula} the identities
\begin{equation*}
  \widehat{f}_{v}(x_v;s) = 1\quad\text{ if }\quad n\nmid q_v-1
\end{equation*}
and
\begin{equation*}
    \widehat{f}_{v}(x_v;s) = 1 +
    \left(\sum_{\substack{\chi_v\in\Hom(\OO_v^\times,\ \mu_n)\\\ker\chi_v =
          \OO_v^{\times n}}}\chi_v(x_v)\right)q_v^{-n(1-1/n)s}\quad \text{ if
    }\quad n\mid q_v-1.
  \end{equation*}
To see that the sum inside the parentheses has the desired shape, use
inclusion-exclusion and the fact that, for $d\mid n$,
\begin{equation*}
  \sum_{\chi_v\in\Hom(\OO_v^\times/\OO_v^{\times d},\ \mu_n)}\chi_v(x_v) =
  \begin{cases}
    \card\Hom(\OO_v^\times/\OO_v^{\times d},\mu_n)=d & \text{ if
    }x_v\in\OO_v^{\times d}/\OO_v^{\times n},\\
    0 & \text{ otherwise. } 
  \end{cases}
\end{equation*}
\end{proof}
The lemma shows in particular that the Euler product defining
$\widehat{f}(x;s)$ converges absolutely and defines a holomorphic function in
the half-plane $(n-1)\Re(s)>1$. The same holds thus for the Dirichlet series
$F_n(\PPP;s)$ and $D(\Lambda,\PPP;s)$, since they have non-negative coefficients.

\subsection{Analysis of the local factors}\label{sec:local_factors}
We will now compare the local factors $\widehat{f}_v(x_v;s)$ to the local
factors of certain Artin $L$-functions. Let $F_0:=F(\mu_n(\Fbar))$. Our
hypotheses on $F$ imply that $[F_0:F]=\phi(n)$. For $x\in \U_S(n)$, we
choose a representative $a\in F^\times$ of $x$ and an $n$-th root $\alpha\in
\Fbar$ of $a$. We consider the field
\begin{equation*}
  F_{x} := F_0(\alpha) = F(\mu_n(\Fbar), \alpha),
\end{equation*}
which is clearly independent of the choice of $a$ and $\alpha$. Since
$\mu_n(\Fbar)\subseteq F_0$, we see that $F_x/F_0$ is cyclic of degree dividing
$n$.

Let $v\notin S$ with $q_v\equiv 1\bmod n$, so $F_v$ has primitive $n$-th roots
of unity and $v$ splits completely in $F_0$. Let $w$ a place of $F_0$ above
$v$, then $w$ is unramified in $F_x$, and we denote its Frobenius automorphism
by $\sigma_w\in\Gal(F_x/F_0)$.

Let $n_v(x) := n/d_v(x_v)$, then $n_v(x)$ is the smallest positive integer with
the property that $\alpha^{n_v(x)}\in F_v=F_{0,w}$, so
$F_{0,w}(\alpha)/F_{0,w}$ is cyclic of degree $n_v(x)$. In particular, the
order of $\sigma_w\in\Gal(F_x/F_0)$ is $n_v(x)$.

\begin{lemma}\label{lem:frobenius}
  Let $v\notin S$ with $q_v\equiv 1\bmod n$. Let $\sigma\in\Gal(F_x/F_0)$ be of
  order $n_v(x)$. Then there are exactly $\phi(n)/\phi(n_v(x))$ places $w\mid
  v$ of $F_0$ with $\sigma_w = \sigma$. 
\end{lemma}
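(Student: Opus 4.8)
The statement is a counting problem for places $w\mid v$ of $F_0$ with prescribed Frobenius in the cyclic extension $F_x/F_0$. Since $v$ splits completely in $F_0$ (as $q_v\equiv 1\bmod n$), the decomposition group of each $w\mid v$ in $\Gal(F_0/F)$ is trivial, so the places $w\mid v$ correspond bijectively to the cosets of $\Gal(F_0/F)$ acting on them; equivalently there are $[F_0:F]=\phi(n)$ such places $w$. I would first recall that $F_x/F$ is Galois (it is the splitting field of $X^n-a$ over $F$, normal because $\mu_n\subset F_0\subset F_x$), and that $\Gal(F_x/F)$ acts transitively on the places $w\mid v$ of $F_0$. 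The key structural input is that for $w\mid v$, the Frobenius $\sigma_w\in\Gal(F_x/F_0)$ is well-defined (no conjugacy ambiguity, since $v$ — hence $w$ — is unramified in $F_x$ and $\Gal(F_x/F_0)$ is abelian), and that under the Galois action of $g\in\Gal(F_x/F)$, we have $\sigma_{gw} = $ the conjugate of $\sigma_w$ by (the image of) $g$ — which, since $\Gal(F_x/F_0)$ is abelian and normal in $\Gal(F_x/F)$, equals $\sigma_w$ raised to a power determined by the action of $g$ on $\mu_n(\Fbar)$ via the cyclotomic character.

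More precisely, $\Gal(F_x/F_0)$ is cyclic of order $n_v(x)$ generated by (say) $\rho$ with $\rho(\alpha)=\zeta\alpha$ for a primitive $n_v(x)$-th root of unity $\zeta$, and for $g\in\Gal(F_x/F)$ with $g(\mu)=\mu^{c(g)}$ on $n$-th roots of unity (so $c(g)\in(\ZZ/n\ZZ)^\times$), one computes $g\rho g^{-1} = \rho^{c(g)}$. Thus the $\Gal(F_x/F)$-conjugacy class of $\sigma_w$ inside $\Gal(F_x/F_0)$ is exactly $\{\sigma_w^{c} : c\in(\ZZ/n_v(x)\ZZ)^\times\}$ — the set of generators of the cyclic subgroup $\langle\sigma_w\rangle$ of order equal to the order of $\sigma_w$. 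Now I would argue: the $\phi(n)$ places $w\mid v$ form a single $\Gal(F_x/F)$-orbit; the stabiliser of $w$ is the decomposition group $D_w$, which (since $w$ is unramified and its Frobenius $\sigma_w$ has order $n_v(x)$, and $w\mid v$ splits completely up to $F_0$) has order $n_v(x)$, generated by a lift of $\sigma_w$. Hence the orbit has size $|\Gal(F_x/F)|/n_v(x) = n\phi(n)/n_v(x) \cdot (1/ \text{something})$ — here one has to be careful: $|\Gal(F_x/F)| = n_v(x)\phi(n)$ since $[F_x:F_0]=n_v(x)$ and $[F_0:F]=\phi(n)$, so the orbit size is indeed $\phi(n)$, consistent with the split-completely count. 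The Frobenius $\sigma_w$ ranges, as $w$ runs over the orbit, over the full $\Gal(F_x/F)$-conjugacy class of any one of them, and each value in that conjugacy class is attained the same number of times, namely $\phi(n)/(\text{size of conjugacy class}) = \phi(n)/\phi(n_v(x))$.

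So the argument structure is: (1) identify $\Gal(F_x/F)$ and the cyclotomic action on $\Gal(F_x/F_0)\cong\ZZ/n_v(x)\ZZ$; (2) show the conjugacy class of an order-$n_v(x)$ element $\sigma$ under this action is the set of $\phi(n_v(x))$ generators of $\langle\sigma\rangle$ (this uses that the cyclotomic character $\Gal(F_x/F)\to(\ZZ/n\ZZ)^\times$ is surjective, which follows from $[F_0:F]=\phi(n)$); (3) count: $\Gal(F_x/F)$ acts transitively on the $\phi(n)$ places $w\mid v$, the Frobenius map $w\mapsto\sigma_w$ is equivariant, hence its fibres over the $\phi(n_v(x))$ elements of the conjugacy class all have equal size $\phi(n)/\phi(n_v(x))$; for $\sigma$ of order $n_v(x)$ \emph{not} in $\langle\sigma_w\rangle$ the fibre is empty — but the lemma only asserts the count for $\sigma$ of the correct order, and in fact all order-$n_v(x)$ elements of the cyclic group $\Gal(F_x/F_0)$ lie in the unique subgroup $\langle\sigma_w\rangle$ of that order, so there is no such $\sigma$, and the count $\phi(n)/\phi(n_v(x))$ is uniform over all order-$n_v(x)$ elements.

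**The main obstacle.** The only subtle point is making precise the equivariance $\sigma_{gw}=g\sigma_w g^{-1}$ and combining it with the abelianness of $\Gal(F_x/F_0)$ to get that conjugation acts through the cyclotomic character; one must verify that every $c\in(\ZZ/n\ZZ)^\times$ is realised, i.e.\ that $\Gal(F_0/F)\to(\ZZ/n\ZZ)^\times$ is onto, which is exactly the linear disjointness hypothesis $[F_0:F]=\phi(n)$ invoked at the start of \S\ref{sec:local_factors}. Everything else is bookkeeping with orbit–stabiliser.
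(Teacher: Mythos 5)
Your argument is essentially the paper's: conjugation of $\Gal(F_x/F_0)$ by $\Gal(F_x/F)$ acts through the cyclotomic character, so the $\Gal(F_x/F)$-conjugacy class of an element of order $n_v(x)$ is exactly the set of the $\phi(n_v(x))$ elements of that order (surjectivity onto $(\ZZ/n\ZZ)^\times$, i.e. $[F_0:F]=\phi(n)$, enters exactly as in the paper); combined with transitivity of $\Gal(F_x/F)$ on the $\phi(n)$ places $w\mid v$ of $F_0$ and the equivariance $\sigma_{\tau(w)}=\tau\sigma_w\tau^{-1}$, all fibres of $w\mapsto\sigma_w$ over such elements have the common size $\phi(n)/\phi(n_v(x))$.

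One slip should be corrected. In your orbit--stabiliser check you take the stabiliser of $w$ (a place of $F_0$) in $\Gal(F_x/F)$ to be a decomposition group of order $n_v(x)$ and assert $[F_x:F_0]=n_v(x)$; this is false in general: $n_v(x)$ is only the order of the Frobenius $\sigma_w$ and merely divides $[F_x:F_0]$ (for most $v$ it is a proper divisor). The correct stabiliser is the whole subgroup $\Gal(F_x/F_0)$, of order $[F_x:F_0]$, because $\Gal(F_x/F)$ acts on places of $F_0$ through its quotient $\Gal(F_0/F)$ and $v$ splits completely in $F_0$; this is precisely how the paper computes it. The slip is harmless for the final count, since the number $\phi(n)$ of places $w\mid v$ already follows from split-completeness and your fibre-equality argument never uses the stabiliser, but the sentence claiming $\abs{\Gal(F_x/F)}=n_v(x)\phi(n)$ because $[F_x:F_0]=n_v(x)$ should be removed or replaced by the correct statement with $[F_x:F_0]$ in place of $n_v(x)$.
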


\begin{proof}
  Write $G:=\Gal(F_x/F)$ and $A:=\Gal(F_x/F_0)$. Let $n':=[F_x:F_0]$. Then
  $A\cong \ZZ/n'\ZZ$ is a normal subgroup of $G$, so $G$ acts on $A$ by
  conjugation. Fix a primitive $n$-th root of unity $\zeta\in F_0$, then any
  $\sigma\in\Gal(F_x/F)$ is determined by $\sigma(\zeta)$ and
  $\sigma(\alpha)$. Let $\sigma\in\Gal(F_x/F_0)$, then
  $\sigma(\alpha)=\zeta^{an/n'}\alpha$, for $a\in\ZZ/n'\ZZ$. For any
  $b\in (\ZZ/n\ZZ)^\times$, there are $n'$ automorphisms $\tau\in G$ with
  $\tau(\zeta)=\zeta^b$. For any such $\tau$, we get
  $\tau\sigma\tau^{-1}(\alpha) = \zeta^{ban/n'}\alpha$, so
  $\tau\sigma\tau^{-1}=\sigma^b$. Thus, the orbit of $\sigma$ under conjugation
  by $G$ is the set of all $\sigma'\in A$ with order $|\sigma'|=|\sigma|$, and
  the stabilizer has order $n'\phi(n)/\phi(|\sigma|)$.

  The group $G$ also acts transitively on the prime ideals of $F_0$ above $v$
  via $w\mapsto \tau(w)$, and for the corresponding Frobenius elements we have
  $\tau\sigma_w\tau^{-1}=\sigma_{\tau(w)}$. Since $v$ splits completely, the
  stabilizer of any $w$ is $A$ of order $n'$. This shows that every $\sigma\in
  A$ of order $n_w(x)$ is the Frobenius element
  $\sigma=\tau\sigma_w\tau^{-1}=\sigma_{\tau(w)}$ for precisely
  $\phi(n)/\phi(n_v(x))$ different places $\tau(w)$ above $v$.
\end{proof}

 Since $\Gal(F_x/F_0)$ is cyclic, the same holds for its character group. For
 any character of full order, we have the following identity.

\begin{lemma}
  Let $v\notin S$ with $q_v\equiv 1\bmod n$. Let
  $\chi\in\Hom(\Gal(F_x/F_0),\CC^\times)$ be a character of order $|\chi|=[F_x:F_0]$. Then
  \begin{equation}\label{eq:character_sum}
    \sum_{w\mid v}\chi(\sigma_w) = \sum_{d\mid d_v(x_v)}\mu(n/d)d.
  \end{equation}
  The sum on the left-hand side runs over all places $w$ of $F_0$ above $v$.
\end{lemma}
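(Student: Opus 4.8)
The plan is to reduce the character sum on the left-hand side of \eqref{eq:character_sum} to a sum of Frobenius counts already controlled by Lemma~\ref{lem:frobenius}, and then to recognize the resulting expression as a M\"obius-weighted divisor sum. First I would use the fact that $\Gal(F_x/F_0)$ is cyclic of order $n':=[F_x:F_0] = d_v(x_v)$-independent but that the relevant quantity is $n_v(x) = n/d_v(x_v)$, the order of each $\sigma_w$; more precisely, every $\sigma_w$ for $w\mid v$ has order exactly $n_v(x)$, as established just before Lemma~\ref{lem:frobenius}. So in the sum $\sum_{w\mid v}\chi(\sigma_w)$, every term is $\chi$ evaluated at an element of order $n_v(x)$. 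Grouping the places $w\mid v$ according to which element $\sigma\in\Gal(F_x/F_0)$ they produce as Frobenius, Lemma~\ref{lem:frobenius} tells us each $\sigma$ of order $n_v(x)$ arises from exactly $\phi(n)/\phi(n_v(x))$ places $w$. Hence
\begin{equation*}
  \sum_{w\mid v}\chi(\sigma_w) = \frac{\phi(n)}{\phi(n_v(x))}\sum_{\substack{\sigma\in\Gal(F_x/F_0)\\ |\sigma|=n_v(x)}}\chi(\sigma).
\end{equation*}

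Next I would evaluate the inner sum $\sum_{|\sigma|=n_v(x)}\chi(\sigma)$ over elements of exact order $n_v(x)$ in a cyclic group, against a character $\chi$ of full order $n'$. Identifying $\Gal(F_x/F_0)\cong\ZZ/n'\ZZ$ and $\chi$ with a primitive character mod $n'$, the elements of order $n_v(x)$ correspond to the $(n'/n_v(x))\cdot k$ with $\gcd(k,n_v(x))=1$; so the sum becomes a Ramanujan-type sum $\sum_{\gcd(k,n_v(x))=1}e(k/n_v(x))=\mu(n_v(x))$. Therefore the left-hand side equals $\frac{\phi(n)}{\phi(n_v(x))}\mu(n_v(x))$. (One must check this is consistent: if $n_v(x)$ is not squarefree both sides vanish, which matches the right-hand side; the interesting cases are squarefree $n_v(x)$.) It then remains to verify the arithmetic identity
\begin{equation*}
  \frac{\phi(n)}{\phi(n_v(x))}\mu(n_v(x)) = \sum_{d\mid d_v(x_v)}\mu(n/d)d,
\end{equation*}
which, writing $n = n_v(x)\cdot d_v(x_v)$ with these two factors coprime (since $d_v(x_v)$ is the largest divisor $d$ of $n$ with $x_v\in\OO_v^{\times d}$, and these need not be coprime in general — a point to watch), one handles by multiplicativity of all functions involved and a direct check on prime powers. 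Indeed the right-hand side $\sum_{d\mid m}\mu(n/d)d$ with $m=d_v(x_v)$ is exactly the kind of M\"obius-Dirichlet convolution that already appeared in Lemma~\ref{lem:local_factors_comp}, so matching it is essentially the same computation performed there.

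The main obstacle I anticipate is not the Ramanujan-sum evaluation but rather the careful bookkeeping of divisibility relations among $n$, $n':=[F_x:F_0]$, $n_v(x)$, and $d_v(x_v)$: one needs that $\sigma_w$ genuinely has order $n_v(x)$ in $\Gal(F_x/F_0)$ (not merely dividing it), that $\chi$ having full order $n'$ forces the inner character sum to pick out exactly the order-$n_v(x)$ elements with the right Ramanujan weight, and that the relation $n = n_v(x) d_v(x_v)$ interacts correctly with the $\gcd(n, q_v-1)$ considerations implicit in the setup (here $q_v\equiv 1\bmod n$ is assumed, which simplifies matters). Once those are pinned down, the identity follows by comparing prime-by-prime, using $\mu(n/d)=0$ unless $n/d$ is squarefree and the multiplicativity of $d\mapsto\mu(n/d)d$. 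I would present the argument by first stating the grouping-by-Frobenius step citing Lemma~\ref{lem:frobenius}, then the cyclic-group character computation, then the final elementary identity, keeping each step short since the analytically substantive input (Lemma~\ref{lem:frobenius}) is already in hand.
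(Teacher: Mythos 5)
Your proposal is correct and follows essentially the same route as the paper: group the places $w\mid v$ by their Frobenius via Lemma~\ref{lem:frobenius}, evaluate the resulting sum over elements of exact order $n_v(x)$ against the full-order character (your Ramanujan-sum evaluation is the same computation as the paper's inclusion--exclusion with character orthogonality, both yielding $\phi(n)\mu(n_v(x))/\phi(n_v(x))$), and then verify the elementary identity with $\sum_{d\mid d_v(x_v)}\mu(n/d)d$. The only cosmetic difference is the last step, where you argue by multiplicativity in coprime factorizations of $n$ plus a prime-power check while the paper factors $n=mD$, $d_v(x_v)=fD$ explicitly; both work, and the non-coprimality of $n_v(x)$ and $d_v(x_v)$ that you flag is indeed harmless for the prime-by-prime comparison.
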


\begin{proof}
  Write $n'=[F_x:F_0]$ and $n_v=n_v(x)$, so that $\Gal(F_x/F_0)\cong \ZZ/n'\ZZ$. Using
  Lemma \ref{lem:frobenius}, inclusion-exclusion and character orthogonality, we obtain
  \begin{equation*}
    \sum_{w\mid v}\chi(\sigma_w) =
    \frac{\phi(n)}{\phi(n_v)}\sum_{\substack{a\in\ZZ/n'\ZZ\\|a|=n_v}}\chi(a)=\frac{\phi(n)}{\phi(n_v)}\sum_{d\mid n_v}\mu(n_v/d)\sum_{\substack{a\in\ZZ/n'\ZZ\\|a|\mid d}}\chi(a) = \frac{\phi(n)}{\phi(n_v)}\mu(n_v).
  \end{equation*}
  To show that the last expression equals the right-hand side of
  \eqref{eq:character_sum}, we recall that $n_v = n/d_v(x_v)$. If
  $v_p(n)>v_p(d_v(x_v))+1$ for some prime $p$, then both expressions are
  zero.

  Thus, let us assume that $v_p(n)\in\{v_p(d_v(x)), v_p(d_v(x))+1\}$ for
  all primes $p$. We group together all prime factors $p$ of $n$, for which
  $v_p(n)=v_p(d_v(x))$ by writing $n=mD$, $d_v(x)=fD$, with $(m,D)=(f,D)=1$ and
  $v_p(m)=v_p(f)+1$ for all primes $p$. The right-hand side in
  \eqref{eq:character_sum} is then equal to
  \begin{equation*}
    \phi(D)\sum_{d\mid f}\mu(m/d)d=\phi(D)\mu(m/f)f=\phi(D)f\mu(n_v) = \frac{\phi(n)}{\phi(n_v)}\mu(n_v).
  \end{equation*}
  The last equality holds, since $\phi(n) = \phi(D)\phi(m) = \phi(D)\phi(m/f)f$.
\end{proof}

  For any character $\chi$ of $\Gal(F_x/F_0)$, we consider the Artin $L$-function
  \begin{equation*}
    L(F_x/F_0,\chi,s) = \prod_{w}\frac{1}{1-\chi(\sigma_w)q_w^{-s}},
  \end{equation*}
  the product running over all places $w$ of $F_0$ that are unramified in
  $F_x$. For a place $v\notin S$ of $F$, the local factor of
  $L(F_x/F_0,\chi,s)$ at $v$ is
  \begin{equation*}
    L_v(F_x/F_0,\chi,s)=\prod_{w\mid v}\frac{1}{1-\chi(\sigma_w)q_w^{-s}}.
  \end{equation*}

  \begin{lemma}\label{lem:loc_fac_artin}
    Let $\chi$ be a character of $\Gal(F_x/F_0)$ of order $[F_x:F_0]$
    and $v\notin S$. Then
    \begin{equation*}
      \widehat{f}_v(x_v;s)=L_v(F_x/F_0,\chi,(n-1)s) + O(q_v^{-2(n-1)s})\quad\text{
        for }\quad(n-1)\Re(s)>1/2.  
    \end{equation*}
  \end{lemma}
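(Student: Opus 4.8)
The plan is to verify the identity by a direct comparison of Euler factors, keeping the two cases $q_v\equiv 1\bmod n$ and $q_v\not\equiv 1\bmod n$ that already appear in Lemma \ref{lem:local_factors_comp}. First I would record the relevant facts about $v$: since $v\notin S$, the place $v$ is non-archimedean with $v\nmid n$ and $v\notin\PPP$, the chosen representative $a\in F^\times$ of $x$ satisfies $\ord_v(a)\equiv 0\bmod n$, the extension $F_0/F$ is unramified at $v$, and $F_{0,w}\supseteq\mu_n$ for every $w\mid v$; consequently each $w\mid v$ is unramified in $F_x=F_0(\alpha)$, so $L_v(F_x/F_0,\chi,\cdot)=\prod_{w\mid v}(1-\chi(\sigma_w)q_w^{-\cdot})^{-1}$ is a product over all places of $F_0$ above $v$. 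I would also note once and for all that the number of such $w$ is at most $[F_0:F]=\phi(n)$, that $|\chi(\sigma_w)|=1$, and that on $(n-1)\Re(s)>1/2$ one has $q_w^{-(n-1)\Re s}\leq q_v^{-(n-1)\Re s}\leq 2^{-1/2}$; these observations keep all implied constants depending only on $F$ and $n$.

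In the case $q_v\not\equiv 1\bmod n$, Lemma \ref{lem:local_factors_comp} gives $\widehat{f}_v(x_v;s)=1$. On the other side, the Frobenius at $v$ in $\Gal(F_0/F)\cong(\ZZ/n\ZZ)^\times$ has order equal to the multiplicative order of $q_v$ modulo $n$, which is $\geq 2$, so every $w\mid v$ has residue degree $\geq 2$, i.e.\ $q_w\geq q_v^2$. Expanding the finitely many geometric series, each factor $(1-\chi(\sigma_w)q_w^{-(n-1)s})^{-1}$ equals $1+O(q_w^{-(n-1)s})=1+O(q_v^{-2(n-1)s})$ uniformly for $(n-1)\Re(s)>1/2$ (the common ratio being bounded away from $1$ there), and the product over the boundedly many $w\mid v$ is then $L_v(F_x/F_0,\chi,(n-1)s)=1+O(q_v^{-2(n-1)s})$. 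This yields the claim in this case.

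In the case $q_v\equiv 1\bmod n$, the place $v$ splits completely in $F_0$, so $q_w=q_v$ for all $w\mid v$. Expanding each factor of $L_v(F_x/F_0,\chi,(n-1)s)=\prod_{w\mid v}(1-\chi(\sigma_w)q_v^{-(n-1)s})^{-1}$ as $1+\chi(\sigma_w)q_v^{-(n-1)s}+O(q_v^{-2(n-1)s})$ and multiplying over the boundedly many factors gives $L_v(F_x/F_0,\chi,(n-1)s)=1+\bigl(\sum_{w\mid v}\chi(\sigma_w)\bigr)q_v^{-(n-1)s}+O(q_v^{-2(n-1)s})$, again uniformly for $(n-1)\Re(s)>1/2$. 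By the character-sum identity \eqref{eq:character_sum} we have $\sum_{w\mid v}\chi(\sigma_w)=\sum_{d\mid d_v(x_v)}\mu(n/d)d$, which is precisely the coefficient in the formula $\widehat{f}_v(x_v;s)=1+\bigl(\sum_{d\mid d_v(x_v)}\mu(n/d)d\bigr)q_v^{-(n-1)s}$ of Lemma \ref{lem:local_factors_comp}. Comparing, $\widehat{f}_v(x_v;s)=L_v(F_x/F_0,\chi,(n-1)s)+O(q_v^{-2(n-1)s})$, as required.

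I do not expect a genuine obstacle here: the statement reduces entirely to the two explicit evaluations already in hand (Lemma \ref{lem:local_factors_comp} and the character-sum identity \eqref{eq:character_sum}) combined with a Taylor expansion of a product of boundedly many geometric series. The only point that needs a little care is making the error term genuinely uniform in $v$, which is handled by the bounds $\#\{w\mid v\}\leq\phi(n)$, $|\chi(\sigma_w)|=1$ and $q_v^{-(n-1)\Re s}\leq 2^{-1/2}$ on $(n-1)\Re(s)>1/2$; and, in the non-split case, observing that the inequality $q_w\geq q_v^2$ is exactly what upgrades the naive bound $O(q_v^{-(n-1)s})$ to the claimed $O(q_v^{-2(n-1)s})$.
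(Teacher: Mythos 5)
Your proof is correct and follows essentially the same route as the paper: expand the local Euler factor, use the non-split case observation $q_w\geq q_v^2$ when $q_v\not\equiv 1\bmod n$, and use the character-sum identity \eqref{eq:character_sum} together with Lemma \ref{lem:local_factors_comp} when $q_v\equiv 1\bmod n$. The extra uniformity checks you include are fine but the argument is the same as the paper's.
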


  \begin{proof}
    For $\Re(s)>1/2$, we have
    \begin{equation*}
      L_v(F_x/F_0,\chi,s)=1 + \sum_{w\mid v}\chi(\sigma_w)q_w^{-s} + O(q_v^{-2s}).
    \end{equation*}
    If $q_v\not\equiv 1\bmod n$, then $v$ does not split completely in $F_0$,
    so $q_w\geq q_v^2$ for all $w\mid v$ and $L_v(F_x/F_0,\chi,s)=1 +
    O(q_v^{-2s})$. If $q_v\equiv 1\bmod n$, then $q_w=q_v$ and, by
    \eqref{eq:character_sum},
     \begin{equation*}
      L_v(F_x/F_0,\chi,s)=1 + \left(\sum_{d\mid d_v(x_v)}\mu(n/d)d\right)q_v^{-s} + O(q_v^{-2s}).
    \end{equation*}
    Compare these expressions to Lemma \ref{lem:local_factors_comp}.
  \end{proof}

  \begin{lemma}\label{lem:meromorphic_cont}
    Let $\chi$ be a character of $\Gal(F_x/F_0)$ of order $[F_x:F_0]$ and $\varepsilon>0$. There is
    a holomorphic function $g(x;s)$ on $(n-1)\Re(s)>1/2$, satisfying
    $g(x;s)\ll_\varepsilon 1$ on $(n-1)\Re(s)\geq 1/2+\varepsilon$, such that
    \begin{equation*}
      \widehat{f}(x;s) = g(x;s)L(F_x/F_0,\chi,(n-1)s)\quad\text{ on }\quad(n-1)\Re(s)>1.
    \end{equation*}
  \end{lemma}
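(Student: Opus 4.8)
\emph{Proof plan.} The plan is to take $g(x;s):=\widehat f(x;s)\,L(F_x/F_0,\chi,(n-1)s)^{-1}$ and verify it has the required properties. On $(n-1)\Re(s)>1$ both Euler products converge absolutely and $L(F_x/F_0,\chi,(n-1)s)\neq 0$, so $g(x;s)$ is holomorphic there and the asserted identity holds by definition. It thus remains to continue $g(x;s)$ holomorphically to $(n-1)\Re(s)>1/2$ with $g(x;s)\ll_\varepsilon 1$ on $(n-1)\Re(s)\geq 1/2+\varepsilon$.

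First I would expand $g(x;s)$ as an Euler product over the places $v$ of $F$, grouping the $w$-factors of $L(F_x/F_0,\chi,(n-1)s)$ according to the place $v$ of $F$ lying below $w$. For $v\notin S$ every place $w\mid v$ of $F_0$ is unramified in $F_x$ (using $x\in\U_S(n)$ together with the fact that $F_0/F$ is unramified away from $n$, as in the discussion preceding Lemma~\ref{lem:frobenius}), so the $v$-part of $L(F_x/F_0,\chi,(n-1)s)$ is exactly $L_v(F_x/F_0,\chi,(n-1)s)$ and the $v$-factor of $g(x;s)$ equals $\widehat f_v(x_v;s)\,L_v(F_x/F_0,\chi,(n-1)s)^{-1}$. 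This yields a factorisation $g(x;s)=g_1(x;s)\,g_2(x;s)$, where $g_1(x;s)$ is the \emph{finite} product over $v\in S$ of the local factors $\widehat f_v(x_v;s)$ (for $v\mid n\infty$ and for $v\in\PPP$) times the Euler factors $1-\chi(\sigma_w)q_w^{-(n-1)s}$ over the (finitely many) places $w$ above $S$ that are unramified in $F_x$, and $g_2(x;s):=\prod_{v\notin S}\widehat f_v(x_v;s)\,L_v(F_x/F_0,\chi,(n-1)s)^{-1}$.

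The factor $g_1$ is unproblematic: by the construction in \S\ref{sec:setup} the local Fourier factors $\widehat f_v(x_v;s)$ with $v\mid n\infty$ are holomorphic and $\ll_{F,n,\Lambda}1$ on $\Re(s)\geq 0$ (they are constants for $v\mid\infty$ and finite Dirichlet polynomials in $q_v^{-s}$ for $v\mid n$), those with $v\in\PPP$ equal $1/n$ by \eqref{eq:local_factor_p}, and each factor $1-\chi(\sigma_w)q_w^{-(n-1)s}$ is entire and bounded by $2$ on $(n-1)\Re(s)\geq 1/2+\varepsilon$ (since $q_w\geq 2$ and $|\chi(\sigma_w)|\leq 1$); as the number of factors is $\ll_{F,n,l}1$, $g_1(x;s)$ is holomorphic on $(n-1)\Re(s)>1/2$ and $\ll_\varepsilon 1$ on $(n-1)\Re(s)\geq 1/2+\varepsilon$, uniformly in $x$ and in $\ppp_1,\dots,\ppp_l$. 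For $g_2$, fix $v\notin S$: on $(n-1)\Re(s)\geq 1/2+\varepsilon$ the product $L_v(F_x/F_0,\chi,(n-1)s)$ has at most $\phi(n)=[F_0:F]$ Euler factors, each of size $\ll_\varepsilon 1$ and bounded away from $0$, so $L_v(F_x/F_0,\chi,(n-1)s)^{\pm1}\ll_\varepsilon 1$ there; combining this with Lemma~\ref{lem:loc_fac_artin}, whose error term $O(q_v^{-2(n-1)s})$ has an implied constant depending only on $n$, gives
\[
  \widehat f_v(x_v;s)\,L_v(F_x/F_0,\chi,(n-1)s)^{-1}=1+O_\varepsilon\bigl(q_v^{-1-2\varepsilon}\bigr)
\]
on that region. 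Hence $g_2(x;s)$ converges absolutely and locally uniformly on $(n-1)\Re(s)>1/2$, defining a holomorphic function with $g_2(x;s)\ll_\varepsilon 1$ on $(n-1)\Re(s)\geq 1/2+\varepsilon$; then $g=g_1g_2$ is as required.

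I expect the only genuine subtlety to lie in the last step: one must make sure that the error term in Lemma~\ref{lem:loc_fac_artin} \emph{and} the bound for $L_v(F_x/F_0,\chi,(n-1)s)^{-1}$ are uniform both in the place $v$ and in $x$, so that the infinite product $g_2$ really converges; the bookkeeping over the finitely many places of $S$ contributing to $g_1$ is entirely routine.
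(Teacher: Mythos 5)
Your proposal is correct and follows essentially the same route as the paper: it uses Lemma \ref{lem:loc_fac_artin} to show each ratio $\widehat f_v(x_v;s)L_v(F_x/F_0,\chi,(n-1)s)^{-1}$ for $v\notin S$ is $1+O(q_v^{-1-2\varepsilon})$ on $(n-1)\Re(s)\geq 1/2+\varepsilon$, so the product over $v\notin S$ converges to a bounded holomorphic function, and then absorbs the finitely many bounded factors coming from $v\in S$ (the paper's $h(x;s)$ times the local factors at $S$). Your treatment is merely a bit more explicit than the paper's about the $L$-factors at places of $F_0$ above $S$, which the paper folds silently into $h(x;s)$.
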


  \begin{proof}
    Let $(n-1)\Re(s)>1$. Lemma \ref{lem:loc_fac_artin} implies that
    \begin{equation*}
      \prod_{v\notin S}\widehat{f}_v(x_v;s) = h(x;s)L(F_x/F_0,\chi,(n-1)s),
    \end{equation*}
    with a holomorphic function $h(x;s)$ on $(n-1)\Re(s)>1/2$ satisfying
    $1\ll_\varepsilon h(x;s)\ll_\varepsilon 1$ on
    $(n-1)\Re(s)\geq 1/2+\varepsilon$. Muptiply $h(x;s)$ by the $|S|\ll 1$ local factors
    $\widehat{f}_v(x_v;s)$ at $v\in S$ to get $g(x;s)$. The upper bound
    for $g(x;s)$ remains intact, since $\widehat{f}_v(x_v;s)\ll 1$ for
    $\Re(s)\geq 0$ and all $v\in S$. 
  \end{proof}

  From Lemma \ref{lem:meromorphic_cont}, we obtain a meromorphic
  continuation of $\widehat{f}(x;s)$, and thus also of $F_n(\PPP;s)$
  and $D(\Lambda,\PPP;s)$, to $(n-1)\Re s>1/2$. Since $\Gal(F_x/F_0)$
  is abelian, the Artin $L$-function $L(F_x/F_0,\chi,s)$ is a Hecke
  $L$-function and as such entire whenever $\chi$ is
  non-trivial. Thus, the only possible pole of $D(\Lambda,\PPP;s)$ in
  $(n-1)\Re s>1/2$ is a simple pole at $s=1/(n-1)$, coming from
  $\widehat{f}(x;s)$ for $x\in\U_S(n)$ with $F_x=F_0$, in which case
  $L(F_x/F_0,\chi,s)$ is the Dedekind zeta function $\zeta_{F_0}(s)$
  of $F_0$.

\subsection{Estimates in vertical strips}\label{sec:vertical}
Let us prove next the estimate \eqref{eq:ds_estimate} for $D(\Lambda,\PPP;s)$ in the
vertical strip. We use the best available subconvexity bounds in conjunction
with the Phragmen-Lindel\"of principle to estimate $L(F_x/F_0,\chi,s)$. As
$F_x/F_0$ is abelian, this coincides with the Hecke $L$-function $L(\psi,s)$ of
some Dirichlet character of $F_0$ whose conductor $\mathfrak{f}_\psi$ divides
the conductor of $F_x/F_0$. Since only places $w$ of $F_0$ above places $v\in
S$ can ramify in $F_x$, and since places above $\ppp_1,\ldots,\ppp_l$ are at
worst tamely ramified, we get $\absnorm(\mathfrak{f}_\psi)\ll
\absnorm(\ppp_1\cdots\ppp_l)^{\phi(n)}$.

First assume that $[F_x:F_0]>1$. Then $\chi$ is non-trivial, so $\psi$ is a
non-principal Dirichlet character. For $t\in\RR$, we consider the twist $\psi_t
:= \psi\abs{\cdot}^{it}$, where $\abs{\cdot}$ is the norm character on the
idele class group of $F_0$. Then $\psi_t$ is a Hecke character of the same
conductor $\mathfrak{f}_\psi$ and analytic conductor $C(\psi_t)\leq
\absnorm(\mathfrak{f}_\psi)(2+|t|)^{m\phi(n)}$.

  From \cite[Theorem 1.1]{Wu2016}, applied to $\psi_t$, we get
  \begin{equation}\label{eq:burgess}
    L(1/2+it;\psi) \ll_{\varepsilon} \left(\absnorm(\mathfrak{f}_\psi)(1+|t|)^{m\phi(n)}\right)^{\alpha/2+\varepsilon},
  \end{equation}
  where
  \begin{equation*}
    \alpha = \frac{1}{2}- \frac{1-2\theta}{8},
  \end{equation*}
  with $\theta$ any exponent towards the Ramanujan-Petersson conjecture. By
  \cite{MR2811610}, one may take $\theta=7/64$. If $F=\QQ$ and $n=2$, then
  $L(s;\psi)$ is a Dirichlet $L$-function and we get from \cite{MR576334} the
  bound \eqref{eq:burgess} with $\alpha=3/8$.

  For any $\gamma\in(0,1)$, the previous bounds in conjunction with the Phragmen-Lindel\"of principle yield 
  \begin{equation*}
    L(F_x/F_0,\chi,s)=L(s;\psi)\ll_{\eta,\varepsilon}\left(\absnorm(\mathfrak{f}_\psi)(1+|\Im(s)|)^{m\phi(n)}\right)^{\alpha(1+\eta-\Re(s))+\varepsilon}  
  \end{equation*}
  in the strip $1/2\leq \Re(s)\leq 1+\eta$. By a similar argument, we find in
  case $F_x=F_0$ that
  \begin{equation*}
    \frac{|s-1|}{|s|}L(F_x/F_0,\chi,s) = \frac{|s-1|}{|s|}\zeta_{F_0}(s) \ll_{\eta,\varepsilon}(1+|\Im(s)|)^{m\phi(n)\alpha(1+\eta-\Re(s))+\varepsilon}. 
  \end{equation*}
  Together with Lemma \ref{lem:meromorphic_cont} and the fact that
  $\card\OO_S^\times/\OO_S^{\times n}\ll 1$, these observations are enough to
  prove the estimate \eqref{eq:ds_estimate}.

\subsection{Residues}\label{sec:residue}
It remains to show that the $D(\Lambda,\PPP; s)$ does indeed have a pole at $s=1/(n-1)$ and
to compute the residue. We have already seen that only the summands
$\widehat{f}(x; s)$ with $F_x=F_0$ contribute to the residue. Recall that
$S=S'\cup\PPP$, with $S'$ the set of places of $F$
dividing $n\infty$. 

\begin{lemma}\label{lem:USprime}
  Let $x\in\U_S(n)$ with $F_x=F_0$. Then $x\in\U_{S'}(n)$.
 \end{lemma}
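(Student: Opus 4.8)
The plan is to unwind the definitions and reduce the claim to a local ramification statement at the places of $\PPP=\{\ppp_1,\dots,\ppp_l\}$, which are exactly the places lying in $S$ but not in $S'$. Since $S'\subseteq S$, we trivially have $\U_{S'}(n)\subseteq\U_S(n)$, so it will suffice to prove that, for $x\in\U_S(n)$ with $F_x=F_0$, one already has $x_v\in\OO_v^\times/\OO_v^{\times n}$ for every $v\in\PPP$; the conditions at $v\notin S$ are built into the hypothesis $x\in\U_S(n)$, and the places $v\in S'$ are irrelevant.

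First I would fix a representative $a\in F^\times$ of $x$ and an $n$-th root $\alpha\in\Fbar$ of $a$, so that $F_x=F_0(\alpha)$ by definition. The hypothesis $F_x=F_0$ says $\alpha\in F_0$, hence $a=\alpha^n\in F_0^{\times n}$. Now fix $\ppp\in\PPP$; by assumption $\ppp\nmid n$, so $\ppp$ is unramified in $F_0=F(\mu_n(\Fbar))$: for any place $w$ of $F_0$ above $\ppp$ the local extension $F_{0,w}/F_\ppp$ is $F_\ppp(\mu_n)/F_\ppp$, which is unramified because $x^n-1$ is separable over the residue field of $\ppp$ when $\ppp\nmid n$. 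With normalized valuations, $\ord_w$ restricts to $\ord_\ppp$ on $F^\times$, and therefore $\ord_\ppp(a)=\ord_w(a)=n\,\ord_w(\alpha)\equiv 0\pmod n$. Hence the class of $a$ in $F_\ppp^\times/F_\ppp^{\times n}$ is represented by a unit, i.e.\ $x_\ppp\in\OO_\ppp^\times/\OO_\ppp^{\times n}$. Combining this with $x\in\U_S(n)$ yields $x\in\U_{S'}(n)$.

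I do not expect a genuine obstacle here; the only points requiring a little care are the standard equivalence ``$x_v\in\OO_v^\times/\OO_v^{\times n}$ if and only if $n\mid\ord_v(a)$'' for a representative $a$ of $x$, and the unramifiedness of $F_0/F$ outside $n$. If one prefers to avoid completions, the same argument can be phrased globally: write $a\OO_{F_0}=\bbb^n$ for a fractional ideal $\bbb$ of $\OO_{F_0}$ and read off valuations at the primes $w\mid\ppp$, where $e(w/\ppp)=1$, to conclude $n\mid\ord_\ppp(a)$.
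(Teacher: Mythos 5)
Your proof is correct and follows essentially the same route as the paper: use $F_x=F_0$ to get $\alpha\in F_0$ with $\alpha^n=a$, note that $F_0/F$ is unramified at places outside $S'$, and conclude $n\mid\ord_v(a)$ from $\ord_v(a)=\ord_w(a)=n\,\ord_w(\alpha)$. The only cosmetic difference is that you check the condition just at $v\in\PPP$ (invoking $x\in\U_S(n)$ elsewhere), while the paper runs the same valuation argument uniformly for all $v\notin S'$.
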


\begin{proof}
  Let $a\in F^\times$ be any representative of $x$. Since $F_x=F_0$, there is
  $\alpha\in F_0$ with $\alpha^n=a$. Let $v\notin S'$ be any place of $F$. To
  prove the lemma, we must show that the $v$-adic valuation $\ord_v(a)$ is in
  $n\ZZ$. Since $F_0/F$ is unramified outside $S'$, we see that
  $F_v(\alpha)/F_v$ is unramified as well. Thus, with $w$ the extension of $v$
  to $F_v(\alpha)$,
  \begin{equation*}
    \ord_v(a) = \ord_{w}(a) =
    n\cdot\ord_{w}(\alpha)\in n\ZZ.
  \end{equation*}
\end{proof}

Moreover, if $F_x=F_0$, almost all the local factors $\widehat{f}_v(x_v;s)$ are
independent of $x$.

\begin{lemma}\label{lem:local_factors_indep}
  Let $x\in\mathcal{U}_{S'}(n)$ with $F_x=F_0$ and $v\notin S'$. Then
  \begin{equation*}
    \widehat{f}_v(x_v;s)=
    \begin{cases}
      \frac{1}{n} &\text{ if }v\in\PPP,\\
      1 &\text{ if }v\notin \PPP\text{ and }q_v\not\equiv 1\bmod n,\\
      1+\phi(n)q_v^{-(n-1)s} &\text{ if }v\notin \PPP\text{ and }q_v\equiv 1\bmod n.
    \end{cases}
  \end{equation*}
\end{lemma}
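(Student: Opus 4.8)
The plan is to reduce everything to the explicit local-factor computation already recorded in Lemma~\ref{lem:local_factors_comp}, by showing that the condition $F_x=F_0$ forces $d_v(x_v)=n$ at all places $v\notin S'$. First I would recall that $d_v(x_v)$ was defined as the largest divisor $d$ of $n$ with $x_v\in\OO_v^{\times d}/\OO_v^{\times n}$, and that $n_v(x)=n/d_v(x_v)$ is the smallest positive integer with $\alpha^{n_v(x)}\in F_v$ (equivalently, $[F_{0,w}(\alpha):F_{0,w}]=n_v(x)$ for a place $w\mid v$ of $F_0$). The key point is that if $F_x=F_0$, then $\alpha\in F_0$, so $F_{0,w}(\alpha)=F_{0,w}$, hence $n_v(x)=1$ and $d_v(x_v)=n$ for every non-archimedean $v\notin S'$.

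Given $d_v(x_v)=n$, I would simply substitute into the two cases of Lemma~\ref{lem:local_factors_comp}. For $v\in\PPP$ the value $\widehat f_v(x_v;s)=1/n$ is exactly \eqref{eq:local_factor_p} and needs no further comment. For $v\notin\PPP$, $v\nmid n$: if $q_v\not\equiv 1\bmod n$, the lemma gives $\widehat f_v(x_v;s)=1$ directly; if $q_v\equiv 1\bmod n$, the lemma gives $\widehat f_v(x_v;s)=1+\bigl(\sum_{d\mid n}\mu(n/d)d\bigr)q_v^{-(n-1)s}$, and the inner sum is the standard Dirichlet-convolution identity $\sum_{d\mid n}\mu(n/d)d=\varphi(n)$ (this is $(\mu * \mathrm{id})(n)$), which yields the claimed $1+\phi(n)q_v^{-(n-1)s}$. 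That is the whole computation; there is no genuine analytic content.

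The only mild subtlety—and the step I expect to require the most care to phrase correctly—is the passage ``$F_x=F_0 \Rightarrow \alpha\in F_0$''. This is not automatic from $F_x=F_0(\alpha)=F_0$ alone unless one remembers that $\alpha$ was chosen to be a fixed $n$-th root of the representative $a\in F^\times$ of $x$, so $\alpha$ genuinely lies in the field $F_x$ by construction, and $F_x=F_0$ then literally means $\alpha\in F_0$. One should also note that Lemma~\ref{lem:USprime} (just proved) guarantees $x\in\U_{S'}(n)$, so the hypothesis $x_v\in\OO_v^{\times}/\OO_v^{\times n}$ needed to apply Lemma~\ref{lem:local_factors_comp} is in force at all $v\notin S'$; and that by hypothesis $v\nmid n$ so $v$ is indeed of the type handled by that lemma. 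Assembling these observations gives the three cases verbatim.

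\begin{proof}
  By Lemma~\ref{lem:USprime}, $x\in\U_{S'}(n)$, so $x_v\in\OO_v^\times/\OO_v^{\times n}$ for all non-archimedean $v\notin S'$, and we may apply Lemma~\ref{lem:local_factors_comp}. Fix such a $v$, and recall from \S\ref{sec:local_factors} that for a representative $a\in F^\times$ of $x$ and an $n$-th root $\alpha\in\Fbar$ of $a$ we have $F_x=F_0(\alpha)$, and that $d_v(x_v)$ is the largest divisor $d$ of $n$ with $x_v\in\OO_v^{\times d}/\OO_v^{\times n}$, equivalently $n/d_v(x_v)$ is the smallest positive integer $k$ with $\alpha^k\in F_v$. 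Since $F_x=F_0$ we have $\alpha\in F_0\subseteq F_v$ (indeed $v$ is unramified in $F_0$ and the relevant completion of $F_0$ contains $\alpha$), so $\alpha\in F_v$, forcing $n/d_v(x_v)=1$, i.e.\ $d_v(x_v)=n$.

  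If $v\in\PPP$, then \eqref{eq:local_factor_p} gives $\widehat f_v(x_v;s)=1/n$. If $v\notin\PPP$ and $q_v\not\equiv 1\bmod n$, then Lemma~\ref{lem:local_factors_comp} gives $\widehat f_v(x_v;s)=1$. If $v\notin\PPP$ and $q_v\equiv 1\bmod n$, then Lemma~\ref{lem:local_factors_comp} together with $d_v(x_v)=n$ gives
  \begin{equation*}
    \widehat f_v(x_v;s)=1+\Bigl(\sum_{d\mid n}\mu(n/d)d\Bigr)q_v^{-(n-1)s}=1+\phi(n)q_v^{-(n-1)s},
  \end{equation*}
  since $\sum_{d\mid n}\mu(n/d)d=(\mu*\mathrm{id})(n)=\phi(n)$.
\end{proof}
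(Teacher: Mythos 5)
Your route is the same as the paper's: apply Lemma \ref{lem:local_factors_comp} (and \eqref{eq:local_factor_p} for $v\in\PPP$) after showing that $F_x=F_0$ forces $d_v(x_v)=n$, and finish with $\sum_{d\mid n}\mu(n/d)d=\phi(n)$. However, the step in which you establish $d_v(x_v)=n$ is flawed as written. You assert $\alpha\in F_0\subseteq F_v$ for \emph{every} non-archimedean $v\notin S'$, justifying it by ``$v$ is unramified in $F_0$ and the relevant completion of $F_0$ contains $\alpha$''. The inclusion $F_0\subseteq F_v$ is false in general: $F_0=F(\mu_n(\Fbar))$ embeds into $F_v$ only when $v$ splits completely in $F_0$, which for $v\nmid n$ happens exactly when $q_v\equiv 1\bmod n$; unramifiedness only gives $\alpha\in F_{0,w}$, and $F_{0,w}$ may be a proper extension of $F_v$, in which case $\alpha\in F_v$ does not follow. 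In fact your blanket claim ``$d_v(x_v)=n$ for all $v\notin S'$'' is genuinely false for composite $n$: take $F=\QQ$, $n=4$, $a=-4=(1+i)^4$, so $F_x=\QQ(i)=F_0$, and let $v=p\equiv 3\bmod 4$; then $-4$ is a nonsquare unit in $\QQ_p$, so $d_v(x_v)=1$.

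The lemma is nevertheless correct, because $d_v(x_v)$ only enters the third case, and the repair is exactly the paper's argument: when $q_v\equiv 1\bmod n$ (and $v\nmid n$), the place $v$ splits completely in $F_0$, so $F_{0,w}=F_v$ for any $w\mid v$; since $F_x=F_0$ gives $\alpha\in F_0\subseteq F_{0,w}=F_v$, we get $a\in F_v^{\times n}$ and hence $d_v(x_v)=n$, and Lemma \ref{lem:local_factors_comp} then yields $1+\phi(n)q_v^{-(n-1)s}$. So you should restrict the claim $d_v(x_v)=n$ to the case $q_v\equiv 1\bmod n$ and justify it by complete splitting (i.e.\ $F_{0,w}=F_v$), not by unramifiedness. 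The remaining ingredients of your write-up — invoking Lemma \ref{lem:USprime} to ensure $x_v\in\OO_v^\times/\OO_v^{\times n}$, the case $v\in\PPP$ via \eqref{eq:local_factor_p}, the case $q_v\not\equiv 1\bmod n$ directly from Lemma \ref{lem:local_factors_comp}, and the identity $\sum_{d\mid n}\mu(n/d)d=\phi(n)$ — are all fine and match the paper.
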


\begin{proof}
  For $v\in\PPP$, this is \eqref{eq:local_factor_p}. For $v\notin\PPP$ with
  $q_v\not\equiv 1\bmod n$, it is Lemma \ref{lem:local_factors_comp}. In the
  remaining case, it also follows from Lemma \ref{lem:local_factors_comp},
  since then $F_x=F_0$ implies that $d_v(x_v)=n$. Indeed, let $a\in F^\times$
  be a representative of $x$ and $\alpha\in F_0$ with $\alpha^n=a$. Since
  $q_v\equiv 1\bmod n$, we get $F_{0,w}=F_v$ for any place $w$ of $F_0$ above
  $v$, so $\alpha\in F_{0,w}=F_v$, and thus $a\in F_v^{\times n}$.
\end{proof}

With $\alpha:=n-1$ and $\zeta_{F_0,v}(s)$ defined as the product of all local
factors of $\zeta_{F_0}(s)$ at places above $v$, the residue of $D(\Lambda,\PPP;s)$ at
$1/(n-1)$ has the form $c(\Lambda,\PPP)\Res_{s=1/\alpha}\zeta_{F_0}(\alpha s)$, where
$c(\Lambda,\PPP) := \lim_{s\to 1/\alpha}\zeta_{F_0}(\alpha s)^{-1}D(\Lambda,\PPP;s)$. Using
Lemma \ref{lem:USprime} and Lemma \ref{lem:local_factors_indep}, we compute
$c(\Lambda,\PPP)$ as 
\begin{align*}
  c(\Lambda,\PPP) &= \lim_{s\to 1/\alpha}\zeta_{F_0}(\alpha s)^{-1}F_n(\PPP;s) =
  \frac{1}{|\OO_F^\times/\OO_F^{\times n}|}\sum_{x\in\mathcal{U}_{S}(n)}\lim_{s\to 1/\alpha}\zeta_{F_0}(\alpha
  s)^{-1}\widehat{f}(x;s)\\
&= \frac{1}{|\OO_F^\times/\OO_F^{\times n}|}\sum_{\substack{x\in\mathcal{U}_{S'}(n)\\F_x=F_0}}\prod_{v}\frac{\widehat{f}_{v}(x_v;1/\alpha)}{\zeta_{F_0,v}(1)}\\
&=\prod_{v\in\PPP}\frac{1}{n\zeta_{F_0,v}(1)}\prod_{\substack{v\notin\PPP\\v\nmid
  n\infty\\q_v\not\equiv 1\bmod n}}\frac{1}{\zeta_{F_0,v}(1)}\prod_{\substack{v\notin\PPP\\v\nmid
  n\infty\\q_v\equiv 1\bmod
  n}}\frac{1+\varphi(n)q_v^{-1}}{\zeta_{F_0,v}(1)}\\
&\hspace{5.3cm}\cdot\frac{1}{|\OO_F^\times/\OO_F^{\times n}|}\sum_{\substack{x\in\mathcal{U}_{S'}(n)\\F_x=F_0}}
\prod_{v\mid n\infty}\frac{\widehat{f}_v(x_v,1/\alpha)}{\zeta_{F_0,v}(1)}\\
&=\delta_{\ppp_1}\cdots\delta_{\ppp_l}\cdot c(\Lambda,\emptyset),
\end{align*}
with $\delta_{\ppp}$ as in \eqref{eq:lead_const}. Thus, we identify the residue
as
\begin{equation*}
  \delta_{\ppp_1}\cdots\delta_{\ppp_l}\frac{c_{F,n,\Lambda}}{n-1},
\end{equation*}
with $c_{F,n,\Lambda} := (n-1)c(\Lambda,\emptyset)\cdot \Res_{s=1/\alpha}\zeta_{F_0}(\alpha
s)$. 

To finish the proof of Proposition \ref{prop:ds_merom}, we must show that
$c(\Lambda,\emptyset)>0$, so $D(\Lambda,\PPP;s)$ does indeed have a pole at
$s=1/(n-1)$. We accomplish this first in the case $\Lambda = \Lambda_0 :=
(\{1\})_{v\mid n\infty}$, where all places $v\mid n\infty$ are required to
split completely in $K_\varphi/K$. In this case, one sees immediately from the
definition of the local fourier transform in \cite[\S 3.3]{DanRachel} that
\begin{equation*}
  \widehat{f}_v(x_v;s) =
  \begin{cases}
    1 &\text{ if }v\mid\infty\\
    1/n &\text{ if }v\mid n.
  \end{cases}
\end{equation*}
Since moreover $x=1\in\mathcal{U}_{S'}(n)$ satisfies $F_x=F_0$, we conclude that
\begin{equation*}
  \sum_{\substack{x\in\mathcal{U}_{S'}(n)\\F_x=F_0}}
\prod_{v\mid n\infty}\frac{\widehat{f}_v(x_v,1/\alpha)}{\zeta_{F_0,v}(1)} >0,
\end{equation*}
which is enough to show that $c(\Lambda_0,\emptyset)>0$. For general $\Lambda$,
we have $D(\Lambda,\PPP;s) \geq D(\Lambda_0,\PPP;s)$ for all $s>1/(n-1)$, and
thus $D(\Lambda,\PPP;s)$ must also have a pole at $s=1/(n-1)$.

\section{Proof of Theorem \ref{thm:count_quad_general}: Contour integration}
In this section, we deduce Theorem \ref{thm:count_quad_general} from
Proposition \ref{prop:ds_merom}.

\subsection{Preliminaries and set-up}
We need the following lemma to estimate the coefficients of a Dirichlet
series. Still, all implied constants may always depend on $F,n,\npid$.

\begin{lemma}\label{lem:finite_quad}
  For $m\in\NN$, let 
  \begin{equation*}
b_m:=\{K/F\text{ cyclic; }\ [K:F]=n\ \text{ and }\  \absnorm(\Delta(K/F))=m\}.
\end{equation*}
 Then
  $b_m\ll_{\varepsilon} m^\varepsilon$ holds for all $\varepsilon>0$.
\end{lemma}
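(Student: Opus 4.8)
The plan is to reduce the count $b_m$ to a count of characters with bounded conductor and then invoke the estimate from Lemma \ref{lem:unram}. First I would recall the conductor--discriminant formula: if $K/F$ is cyclic of degree $n$ corresponding (after choosing an isomorphism $\Gal(K/F)\cong\mu_n$) to a surjective $\varphi\in\gext(F)$, then $\absnorm(\Delta(K/F)) = \prod_{a\bmod n}\absnorm(\mathfrak{f}(\varphi^a))$, where $\mathfrak{f}(\psi)$ is the conductor of the character $\psi$. In particular $\absnorm(\mathfrak{f}(\varphi))\leq \absnorm(\Delta(K/F)) = m$, so each such $K$ gives rise to a character $\varphi$ of $\Gal(\Fbar/F)$ of order $n$ whose conductor has norm at most $m$; and $K$ is unramified at every place outside the support of $\mathfrak{f}(\varphi)$ together with the places dividing $n\infty$.

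Next I would bound, for each ideal $\aaa$ of $\OO_F$ with $\absnorm(\aaa)\leq m$, the number of cyclic degree-$n$ extensions $K/F$ unramified outside $S_\aaa := \{v\mid n\infty\}\cup\{v : v\mid\aaa\}$. By Lemma \ref{lem:unram} this number is $\ll_{F,n} n^{[F(\mu_n(\Fbar)):F]\cdot\card S_\aaa}$. Since $\card S_\aaa \leq \card\{v\mid n\infty\} + \omega(\aaa)$, where $\omega(\aaa)$ is the number of distinct prime ideal divisors of $\aaa$, and $\omega(\aaa)\ll \log\absnorm(\aaa)/\log\log(3\absnorm(\aaa)) \ll_\varepsilon \log\absnorm(\aaa)$, we get $n^{[F(\mu_n):F]\cdot\card S_\aaa}\ll_{F,n,\varepsilon} \absnorm(\aaa)^\varepsilon \ll m^\varepsilon$ for every fixed $\varepsilon>0$. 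Summing over the at most $\ll_\varepsilon m^{1+\varepsilon}$ ideals $\aaa$ of norm at most $m$ would only give $m^{1+\varepsilon}$, so instead I would be more careful: rather than summing over all $\aaa$ with $\absnorm\aaa\le m$, I fix $K$, let $\aaa$ be exactly the radical of the part of $\mathfrak{f}(\varphi)$ away from $n$, and note that distinct $K$ unramified outside $S_\aaa$ are being counted; the key point is that $\absnorm\Delta(K/F)=m$ forces $\mathfrak{f}(\varphi)$ (hence its support) to divide a fixed set of possibilities controlled by $m$. Concretely, $b_m \leq \sum_{\aaa}( \text{number of }K\text{ with }\Delta(K/F)\text{ having radical-away-from-}n\text{ equal to }\aaa)$, and the inner term is $\ll_{F,n,\varepsilon} m^\varepsilon$ by the argument above, while the sum ranges only over squarefree $\aaa$ whose $n$-th power (roughly) divides $m$, of which there are $\ll_\varepsilon m^\varepsilon$ since $n\geq 2$.

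The main obstacle is precisely this bookkeeping: making sure the outer sum over possible ramification loci $\aaa$ contributes only $m^\varepsilon$ and not $m^{1+\varepsilon}$. This is handled by observing that a prime $\ppp\nmid n$ that is (tamely) ramified in $K$ must be \emph{totally} ramified (as $K\in\totram$), so it contributes a factor $\absnorm(\ppp)^{n-1}$ to $\absnorm\Delta(K/F)=m$; hence $\aaa^{n-1}$ divides $m$ up to a bounded factor coming from the wildly ramified primes above $n$, and the number of such squarefree $\aaa$ with $\absnorm(\aaa)^{n-1}\leq m$ is at most the number of ideals of norm $\leq m^{1/(n-1)}$, which is $\ll_{F,\varepsilon} m^{1/(n-1)}$. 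Wait — this alone is not $m^\varepsilon$ either; the decisive improvement is that once $\aaa$ is fixed, the \emph{places} in $S_\aaa$ are fixed, and then Lemma \ref{lem:unram} already caps the count of $K$, but we must still avoid overcounting the loci. I would therefore organize it as: $b_m \leq \sum_{\substack{\aaa\text{ squarefree}\\\absnorm(\aaa)^{n-1}\mid m \text{ up to }n\text{-part}}} c(\aaa)$ where $c(\aaa)$ counts $K\in\totram$ of relative discriminant \emph{exactly} supported (away from $n$) on $\aaa$; bounding $c(\aaa)\ll_{F,n,\varepsilon}\absnorm(\aaa)^\varepsilon$ via Lemma \ref{lem:unram} and bounding the number of admissible $\aaa$ by the $\varepsilon$-factor in the divisor bound $\ll_\varepsilon m^\varepsilon$ (valid since each such $\aaa$ corresponds to a divisor of $m$), the two $m^\varepsilon$ factors multiply to $m^\varepsilon$ after renaming $\varepsilon$. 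This completes the proof.
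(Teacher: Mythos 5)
Your core mechanism --- decompose by the ramification locus away from $n$, bound the number of fields per locus via Lemma \ref{lem:unram}, and bound the number of loci by divisor-type estimates --- does work, but one step fails as written: the lemma counts \emph{all} cyclic degree-$n$ extensions $K/F$ with $\absnorm\Delta(K/F)=m$, not only members of $\totram$, so you cannot invoke total ramification of the tame primes. For composite $n$, the inequality $b_m\le\sum_{\aaa}c(\aaa)$ with $c(\aaa)$ counting only $K\in\totram$, and with $\aaa$ restricted by $\absnorm(\aaa)^{n-1}$ essentially dividing $m$, omits genuine cyclic extensions (e.g.\ cyclic quartic fields containing a prime with ramification index $2$) and is therefore false. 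The repair is immediate and uses only tools you already deploy: every $\ppp\nmid n$ ramified in $K$ divides $\Delta(K/F)$, so the squarefree locus $\aaa$ satisfies $\absnorm(\aaa)\mid m$; let $c(\aaa)$ count all cyclic degree-$n$ extensions unramified outside $S_\aaa$, bound it by Lemma \ref{lem:unram} together with $\omega(\aaa)\ll_F\log\absnorm(\aaa)/\log\log(3+\absnorm(\aaa))$ as you do, and note that the number of ideals of $\OO_F$ whose norm divides $m$ is $\ll_{F,\varepsilon}m^{\varepsilon}$ (divisor bound combined with the standard fact that a given norm $d$ is attained by $\ll_{F,\varepsilon}d^{\varepsilon}$ ideals --- this multiplicity should be said explicitly, since ``corresponds to a divisor of $m$'' is not quite a bijection). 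The two $m^{\varepsilon}$ factors then multiply as you intend.

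You should also be aware that the paper's proof is much shorter and avoids the decomposition entirely: any prime ideal dividing $\Delta(K/F)$ has norm dividing $m$, hence lies above a rational prime dividing $m$, so \emph{every} field counted by $b_m$ is unramified outside the single set $S$ of places of $F$ dividing $mn\infty$, whose cardinality is $\ll_F\log(mn)/\log\log(mn)$; one application of Lemma \ref{lem:unram} then gives $b_m\ll n^{\phi(n)\card S}\ll_{F,n,\varepsilon}m^{\varepsilon}$. Your conductor--discriminant formula and the bookkeeping over loci buy nothing extra for this statement, though they would be the natural refinement if one wanted the count separated by ramification locus.
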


\begin{proof}
  Let $S$ be the set of all places of $F$ dividing $mn\infty$. Every extension
  $K/F$ with $\absnorm(\Delta(K/F))= m$ is unramified outside $S$. From Lemma
  \ref{lem:unram}, we deduce the bound $b_m \ll n^{\phi(n)|S|}\ll_\varepsilon m^\varepsilon$. 
\end{proof}

Write $a_m:=\card\{\varphi\in\gext(F); \ \absnorm(\Delta(\varphi))=m,\
f(\PPP;\varphi)=1\}$. Then
\begin{equation*}
  N_{\totram}(\Lambda,\PPP;X)=\sum_{m\leq X}a_m\quad \text{ and }\quad D(\Lambda,\PPP;s)=\sum_{m\in\NN}\frac{a_m}{m^s}. 
\end{equation*}

\subsection{The case $n=2$}
Let us first consider the case $n=2$. Choose $\eta\in(0,1/2)$,
$\sigma_0\in(1,1+\eta)$ and $T\in [1,X]$. By a truncated version of Perron's
formula (\cite[Corollary 5.3]{MR2378655}), we see that
\begin{align}
  N_{\totram}(\Lambda,\PPP;X) &- \frac{1}{2\pi
    i}\int_{\sigma_0-iT}^{\sigma_0+iT}D(\Lambda,\PPP;s)\frac{X^s}{s}\mathrm{d} s\label{eq:perron}\\\nonumber &\ll
  \sum_{X/2\leq m\leq 2X}a_m\min\left\{1,\frac{X}{T|X-m|}\right\} + \frac{4^{\sigma_0}+X^{\sigma_0}}{T}\sum_{m\in\NN}\frac{a_m}{m^{\sigma_0}}. 
\end{align}
Using Lemma \ref{lem:finite_quad} to estimate $a_m\ll_{\varepsilon}m^\varepsilon$ and replacing the minimum by its second term
unless $|X-m|<1$, the first error term is
\begin{equation}\label{eq:error_1}
  \ll_{\varepsilon}X^{\varepsilon}\left(1+\frac{X}{T}(\log X)\right)\ll_{\varepsilon} \frac{X^{1+2\varepsilon}}{T}.
\end{equation}
For $\varepsilon\in (0,\sigma_0-1)$, we see that the second error term is
\begin{equation}\label{eq:error_2}
  \ll_{\varepsilon,\sigma_0} \frac{X^{\sigma_0}}{T}.
\end{equation}
Recall the analytic facts about $D(\Lambda,\PPP;s)$ from Proposition
\ref{prop:ds_merom}. Shifting the line of integration to $\Re s = \sigma_1:=1-\lambda$, for
some $\lambda\in (0,1/2-\eta]$, we see that the integral in \eqref{eq:perron} equals
\begin{equation}\label{eq:residue_thm}
  2\pi i\Res_{s=1}\left(D(\Lambda,\PPP;s)\frac{X^s}{s}\right) + \left(-\int_{\sigma_1-iT}^{\sigma_0-iT}+\hspace{-0.1cm}\int_{\sigma_1-iT}^{\sigma_1+iT}+\hspace{-0.1cm}\int_{\sigma_1+iT}^{\sigma_0+iT}\right)D(\Lambda,\PPP;s)\frac{X^s}{s}\mathrm
  d s .
\end{equation}
Here,
\begin{equation*}
  \Res_{s=1}\left(D(\Lambda,\PPP;s)\frac{X^s}{s}\right) = X\Res_{s=1}(D(\Lambda,\PPP;s)) = \delta_{\ppp_1\cdots\ppp_l}c_{F,n}X
\end{equation*}
is the main term in our asymptotic expansion of $N_{\totram}(\PPP;X)$. Let us estimate
the integrals in \eqref{eq:residue_thm} from above. Using the estimate \eqref{eq:ds_estimate}, we see that
\begin{equation*}
  D(\Lambda,\PPP;s)\ll_{\eta,\varepsilon}(\absnorm(\ppp_1\cdots\ppp_\npid)T^m)^{\alpha(\eta+\lambda)+\varepsilon} \text{
  for }\ s\in(\sigma_1\pm iT, \sigma_0\pm iT),
\end{equation*}
and thus
\begin{equation}\label{eq:error_3}
  \int_{\sigma_1\pm iT}^{\sigma_0\pm iT}D(\Lambda,\PPP;s)\frac{X^s}{s}\mathrm d s \ll_{\eta,\varepsilon,\sigma_0,\lambda}\absnorm(\ppp_1\cdots\ppp_\npid)^{\alpha(\eta+\lambda)+\varepsilon}T^{m\alpha(\eta+\lambda)+\varepsilon-1}X^{\sigma_0}.
\end{equation}
Moreover,
\begin{align}
  \int_{\sigma_1-iT}^{\sigma_1+iT}D(\Lambda,\PPP;s)\frac{X^s}{s}\mathrm d s &\ll_{\eta,\varepsilon,\lambda}
  X^{1-\lambda}\absnorm(\ppp_1\cdots\ppp_\npid)^{\alpha(\eta+\lambda)+\varepsilon}\left(1+\int_{t=1}^Tt^{m\alpha(\eta+\lambda)+\varepsilon-1}\mathrm
    d t\right)\nonumber\\\label{eq:error_4}
&\ll_{\eta,\lambda,\varepsilon} X^{1-\lambda}\absnorm(\ppp_1\cdots\ppp_\npid)^{\alpha(\eta+\lambda)+\varepsilon}T^{m\alpha(\eta+\lambda)+\varepsilon}.
\end{align}
To optimise our error terms in case $m\geq 3$, we choose $\varepsilon$ small, $\eta=4\varepsilon$,
$\sigma_0=1+2\varepsilon$, $\lambda=1/(2m\alpha)$ and $T=X^{\lambda}$. Then the sum of the error terms
\eqref{eq:error_1}--\eqref{eq:error_4} is
\begin{equation*}
  \ll_{\varepsilon}\absnorm(\ppp_1\cdots\ppp_\npid)^{1/(2m)+3\varepsilon}X^{1-64/(103m)+5\varepsilon}.
\end{equation*}
If $m\in\{1,2\}$, we choose $\eta=4\varepsilon$, $\sigma_0=1+2\varepsilon$,
$\lambda=1/2-5\varepsilon$, $T=X^{1/2}$. This allows us to bound the sum of
\eqref{eq:error_1}--\eqref{eq:error_4} by
\begin{equation*}
  \ll_{\varepsilon}\absnorm(\ppp_1\cdots\ppp_\npid)^{3/16+\varepsilon}X^{1-13/32+6\varepsilon}
\end{equation*}
in case $m=1$ and, if $m=2$, by
\begin{equation*}
  \ll_{\varepsilon}\absnorm(\ppp_1\cdots\ppp_\npid)^{103/512+\varepsilon}X^{1-153/512+6\varepsilon}.
\end{equation*}
This concludes the proof of Theorem \ref{thm:count_quad_general} when $n=2$.

\subsection{The case $n\geq 3$}
In case
$n\geq 3$, a direct application of the truncated Perron formula would not yield
satisfactory bounds. Thus, as in standard proofs of tauberian theorems
(see, e.g., \cite[Appendice A]{MR1875171}), we consider first the weighted counting function
\begin{equation*}
  \tilde{N}_{\totram}(\Lambda,\PPP;X) := \sum_{m\leq X}a_m\log(X/m).
\end{equation*}
Moreover, we observe that
\begin{equation*}
  2b = \min\{1/2, 1/(2\phi(n)m\alpha)\},
\end{equation*}
with $\alpha = 103/256$ as in Propositon \ref{prop:ds_merom}. In the following
derivations, $\varepsilon$ denotes a small positive constant. Its precise value
may change between its occurrences. From 
(\ref{eq:ds_estimate}) with $\eta=\varepsilon$, we conclude that
 \begin{equation}\label{eq:general_estimate}
    \frac{\abs{s-1/(n-1)}}{\abs{s}}D(\Lambda,\PPP;s) \ll_{\varepsilon}
    \absnorm(\ppp_1\cdots\ppp_\npid)^{1/(2m)+\varepsilon}(1+|\Im
    s|)^{1/2+\varepsilon}
  \end{equation}
for $(n-1)\Re s\geq 1-2b+\varepsilon$.
  \begin{lemma}\label{lem:ntilde}
    We have the asymptotic formula
    \begin{equation*}
      \tilde{N}_{\totram}(\Lambda,\PPP;X) =
      (n-1)\delta_{\ppp_1\cdots\ppp_\npid}c_{F,n}X^{1/(n-1)} + O_\varepsilon(\absnorm(\ppp_1\cdots\ppp_\npid)^{1/(2m)+\varepsilon}X^{(1-2b)/(n-1)+\varepsilon}).     
    \end{equation*}
  \end{lemma}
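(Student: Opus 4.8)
The plan is to derive the asymptotic formula from Proposition \ref{prop:ds_merom} by a smoothed Perron-type contour integration, the smoothing being exactly the weight $\log(X/m)$. First I would record that $a_m\ll_\varepsilon m^\varepsilon$: indeed $a_m\leq\phi(n)b_m$, since each field $K$ arises from $\phi(n)$ homomorphisms $\varphi\in\gext(F)$, and $b_m\ll_\varepsilon m^\varepsilon$ by Lemma \ref{lem:finite_quad}. Hence $D(\Lambda,\PPP;s)=\sum_m a_m m^{-s}$ converges absolutely for $(n-1)\Re s>1$, and from the identity $\frac{1}{2\pi i}\int_{(c)}y^s s^{-2}\,ds=\max\{0,\log y\}$ (valid for $c>0$, $y>0$) together with this absolute convergence one obtains, for any $\sigma_0$ with $(n-1)\sigma_0>1$,
\[
  \tilde{N}_{\totram}(\Lambda,\PPP;X)=\frac{1}{2\pi i}\int_{\sigma_0-i\infty}^{\sigma_0+i\infty}D(\Lambda,\PPP;s)\,\frac{X^s}{s^2}\,ds,
\]
the integral converging absolutely because $D(\Lambda,\PPP;s)$ is bounded on $\Re s=\sigma_0$ and $|s|^{-2}$ is integrable there. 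The role of the extra factor $s^{-2}$ (compared with the hard-truncated Perron formula used for $n=2$) is that the shifted vertical integral will then converge unconditionally in spite of the mild growth of $D(\Lambda,\PPP;s)$ recorded in \eqref{eq:general_estimate}.

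Next I would shift the line of integration to $\Re s=\sigma_1:=(1-2b+\varepsilon)/(n-1)$. Since $2b=\min\{1/2,1/(2\phi(n)m\alpha)\}\leq 1/2$, we have $(n-1)\sigma_1=1-2b+\varepsilon>1/2$, so the closed strip $\sigma_1\leq\Re s\leq\sigma_0$ lies in the region of meromorphic continuation furnished by Proposition \ref{prop:ds_merom}, where \eqref{eq:general_estimate} is valid, and the only singularity of $D(\Lambda,\PPP;s)X^s s^{-2}$ in that strip is the simple pole of $D(\Lambda,\PPP;s)$ at $s=1/(n-1)$ (note $\sigma_1<1/(n-1)<\sigma_0$ for $\varepsilon$ small, and $\sigma_1>0$, so $s=0$ lies outside the strip). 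On the horizontal segments $\Re s\in[\sigma_1,\sigma_0]$, $\Im s=\pm T$, one has $|s|/|s-1/(n-1)|\ll 1$, so \eqref{eq:general_estimate} gives $D(\Lambda,\PPP;s)X^s s^{-2}\ll\absnorm(\ppp_1\cdots\ppp_\npid)^{1/(2m)+\varepsilon}X^{\sigma_0}T^{1/2+\varepsilon-2}\to 0$ as $T\to\infty$. Therefore, by the residue theorem,
\[
  \tilde{N}_{\totram}(\Lambda,\PPP;X)=\Res_{s=1/(n-1)}\!\left(D(\Lambda,\PPP;s)\frac{X^s}{s^2}\right)+\frac{1}{2\pi i}\int_{\sigma_1-i\infty}^{\sigma_1+i\infty}D(\Lambda,\PPP;s)\,\frac{X^s}{s^2}\,ds.
\]

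For the main term, Proposition \ref{prop:ds_merom} gives $\Res_{s=1/(n-1)}D(\Lambda,\PPP;s)=\delta_{\ppp_1\cdots\ppp_\npid}c_{F,n}(n-1)^{-1}$, whence $\Res_{s=1/(n-1)}(D(\Lambda,\PPP;s)X^s s^{-2})=\delta_{\ppp_1\cdots\ppp_\npid}c_{F,n}(n-1)^{-1}\cdot(n-1)^2X^{1/(n-1)}=(n-1)\delta_{\ppp_1\cdots\ppp_\npid}c_{F,n}X^{1/(n-1)}$, which is precisely the asserted main term. For the error term, on the line $\Re s=\sigma_1\neq 1/(n-1)$ one has $|s-1/(n-1)|\geq|\Im s|$ and $|s|\ll 1+|\Im s|$, so $|s|/|s-1/(n-1)|\ll 1$ uniformly and \eqref{eq:general_estimate} reduces to $D(\Lambda,\PPP;s)\ll_\varepsilon\absnorm(\ppp_1\cdots\ppp_\npid)^{1/(2m)+\varepsilon}(1+|\Im s|)^{1/2+\varepsilon}$; since $\int_{-\infty}^{\infty}(1+|t|)^{1/2+\varepsilon}(\sigma_1^2+t^2)^{-1}\,dt$ converges (the integrand decaying like $|t|^{-3/2+\varepsilon}$), the vertical integral over $\Re s=\sigma_1$ is $\ll_\varepsilon\absnorm(\ppp_1\cdots\ppp_\npid)^{1/(2m)+\varepsilon}X^{\sigma_1}\ll_\varepsilon\absnorm(\ppp_1\cdots\ppp_\npid)^{1/(2m)+\varepsilon}X^{(1-2b)/(n-1)+\varepsilon}$, as required. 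The only delicate point is to ensure that the growth estimate \eqref{eq:general_estimate} actually applies on the shifted line $\Re s=\sigma_1$, which is exactly what the definition of $b$ (via $\alpha$) is engineered to guarantee; beyond that, the argument is routine bookkeeping, with all implied constants depending only on $F$, $n$, $\npid$.
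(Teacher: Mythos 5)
Your proposal is correct and follows essentially the same route as the paper: the smoothed Perron representation with kernel $X^s/s^2$, a shift of the contour to $\Re s=(1-2b+\varepsilon)/(n-1)$ picking up the simple pole at $s=1/(n-1)$ with residue $(n-1)\delta_{\ppp_1\cdots\ppp_\npid}c_{F,n}X^{1/(n-1)}$, and the bound \eqref{eq:general_estimate} to control the horizontal and vertical integrals. The extra details you supply (the Perron identity for the $s^{-2}$ kernel, the uniform bound $|s|/|s-\frac{1}{n-1}|\ll 1$ on the shifted line, and the residue arithmetic) are consistent with, and merely flesh out, the paper's argument.
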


  \begin{proof}
    Let $\sigma_0=1/(n-1)+\varepsilon$. The integral representation
    \begin{equation*}
       \tilde{N}_{\totram}(\PPP;X) =
       \frac{1}{2\pi i}\int_{\sigma_0-i\infty}^{\sigma_0+i\infty}D(\Lambda,\PPP;s)\frac{X^s}{s^2}\mathrm
       d s
    \end{equation*}
   converges absolutely due to \eqref{eq:general_estimate}. Moving the line of
   integration to 
   \begin{equation*}
\Re s = \sigma_1 := \frac{1-2b+\varepsilon}{n-1},
\end{equation*}
 we pick up the
   pole at $s=1/(n-1)$ with residue
   \begin{equation*}
     \Res_{s=\frac{1}{n-1}}\left(D(\Lambda,\PPP;s)\frac{X^s}{s^2}\right)=(n-1)\delta_{\ppp_1\cdots\ppp_\npid}c_{F,n}X^{1/(n-1)}.
   \end{equation*}
   Thus, $\tilde{N}_{\totram}(\PPP;X)-(n-1)\delta_{\ppp_1\cdots\ppp_\npid}c_{F,n}X^{1/(n-1)}$ equals
   \begin{equation*}
     \frac{1}{2\pi i}\lim_{T\to\infty}\left(-\int_{\sigma_1-iT}^{\sigma_0-iT}+\int_{\sigma_1-iT}^{\sigma_1+iT}
     + \int_{\sigma_1+iT}^{\sigma_0+iT}\right)D(\Lambda,\PPP;s)\frac{X^s}{s^2}\mathrm d
   s.
   \end{equation*}
   The two horizontal integrals tend to $0$ for $T\to\infty$, due to
   \eqref{eq:general_estimate}. The vertical integral becomes
   \begin{equation*}
     \int_{\sigma_1-i\infty}^{\sigma_1+i\infty}D(\Lambda,\PPP;s)\frac{X^s}{s^2}\mathrm d
   s\ll_\varepsilon
   X^{\sigma_1}\absnorm(\ppp_1\cdots\ppp_\npid)^{1/(2m)+\varepsilon}\left(1+\int_{t=1}^\infty
   t^{-3/2+\varepsilon}\mathrm
    d t\right),
   \end{equation*}
   which is covered by the Lemma's error term.
  \end{proof}
  
  We now deduce an asymptotic formula for $N(X):=N_{\totram}(\Lambda,\PPP;X)$ from the
  formula for $\tilde{N}(X):=\tilde{N}_{\totram}(\Lambda,\PPP;X)$. Clearly, for
  $0<u<1$, we have
  \begin{equation*}
    \frac{\tilde{N}(X(1-u))-\tilde{N}(X)}{\log(1-u)}\leq
    N(X)\leq \frac{\tilde{N}(X(1+u))-\tilde{N}(X)}{\log(1+u)}.
  \end{equation*}
  Moreover, for $u\in (-1,1)\smallsetminus\{0\}$, it follows from Lemma \ref{lem:ntilde} and the
  elementary inequality
  \begin{equation*}
    |u| \leq 2|\log(1+u)|\quad\text{ for }\quad|u|< 1,
  \end{equation*}
 that
  \begin{align*}
    \frac{\tilde{N}(X(1+u))-\tilde{N}(X)}{\log(1+u)} =
    &(n-1)\delta_{\ppp_1\cdots\ppp_\npid}c_{F,n}X^{1/(n-1)}\frac{(1+u)^{1/(n-1)}-1}{\log(1+u)}\\
    &+ O_\varepsilon(\absnorm(\ppp_1\cdots\ppp_\npid)^{1/(2m)+\varepsilon}X^{(1-2b)/(n-1)+\varepsilon}|u|^{-1}).
  \end{align*}
  With the further elementary estimate
  \begin{equation*}
    \frac{(1+u)^{1/(n-1)}-1}{\log(1+u)} = \frac{1}{n-1} +
    O_\varepsilon(u),\quad\text{ for }u\in[-1+\varepsilon,1]\smallsetminus\{0\},
  \end{equation*}
  and using (\ref{eq:lead_const}), Theorem \ref{thm:count_quad} follows immediately from the choice $u:=\pm X^{-b/(n-1)}$.

\section{Heights and small splitting primes}\label{sectionkeylemma}
Let 
\begin{alignat*}1
H_K(\alpha)=\prod_{v\in M_K}\max\{1,|\alpha|_v\}^{d_v}
\end{alignat*}
be the relative multiplicative Weil height of $\alpha\in K$.
Here $M_K$ denotes the set of places of $K$, and for each place $v$ we choose the unique representative $| \cdot |_v$ 
that either extends the usual archimedean absolute value on $\IQ$ or a usual $p$-adic absolute value on $\IQ$, and $d_v = [L_v : \IQ_v]$ denotes the local degree at $v$.
Note that this is exactly the height  in \cite[(2.2)]{EllVentorclass} for the principal divisor $(\alpha, (\alpha))$ associated to $\alpha\in K^\times$.
For an extension of number fields $K/F$ we also use the following invariant
\begin{alignat*}1
\del=\inf\{H_K(\alpha);K=F(\alpha)\},
\end{alignat*}
introduced in  \cite{8,Art1}, and also studied\footnote{In the cited works the authors used the absolute instead of the 
relative height, and denoted the invariant by $\delta(K/k)$ and $\delta(K)$ respectively.} in \cite{VaalerWidmer2013,VaalerWidmer}.
First we recall the key lemma \cite[Lemma 2.3]{EllVentorclass} of Ellenberg and Venkatesh; in fact we state a slightly more precise 
version. Recall from \cite{EllVentorclass} that a prime ideal $\mathfrak{B}$ of $\Oseen_K$ is said to be an extension of a prime ideal 
from a subfield $K_0\subsetneq K$ if there exists a prime ideal $\mathfrak{p}$ of $\Oseen_{K_0}$ such that $\mathfrak{B}=\mathfrak{p}\Oseen_K$. If $\mathfrak{B}$ and $\p$ are non-zero prime ideals in $\Oseen_K$ and $\Oseen_F$ respectively and $\mathfrak{B}\mid \p\Oseen_K$ then we say $\mathfrak{B}$ is unramified in $K/F$ if $\mathfrak{B}^2\nmid \p\Oseen_K$. 

\begin{proposition}[Ellenberg and Venkatesh]\label{keylemma}
Suppose $F\subseteq K$  are number fields, $[K:\IQ]=d$, $\del>\absnorm(\Delta(K/F))^\gamma$, $\delta<\gamma/\ell$, and $\varepsilon>0$.
Moreover, suppose $\mathfrak{B}_1,\ldots,\mathfrak{B}_M$ are unramified prime ideals in $K/F$ of norm
$\absnorm(\mathfrak{B}_i)\leq \absnorm(\Delta(K/F))^\delta$ and are not extensions of prime ideals from any proper subfield of $K$ containing $F$. 
Then we have 
$$\#Cl_K[\ell] \ll_{d,\ell,\gamma,\varepsilon} \DK^{1/2+\varepsilon}M^{-1}.$$
\end{proposition}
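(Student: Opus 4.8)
The plan is to follow the strategy of Ellenberg and Venkatesh, using the hypothesis $\del > \absnorm(\Delta(K/F))^\gamma$ to produce many short relations in the class group. First I would recall the general principle: to bound $\#Cl_K[\ell]$, one fixes an ideal class $\mathfrak{c}$ and counts integral ideals $\aaa$ of small norm with $\aaa^\ell$ principal lying in $\mathfrak{c}$; if $\aaa^\ell = (\gamma_\aaa)$, then the algebraic number $\gamma_\aaa$ has controlled height, and by Minkowski/geometry-of-numbers one can arrange $H_K(\gamma_\aaa)$ to be bounded polynomially in $\absnorm(\aaa)^\ell$ and $\DK^{1/2}$. The key observation is that two ideals $\aaa, \bbb$ in the same $\ell$-torsion-relevant class with $\gamma_\aaa/\gamma_\bbb$ of small height must satisfy a strong rigidity condition, because if $H_K(\gamma_\aaa/\gamma_\bbb)$ is smaller than $\del = \inf\{H_K(\alpha) : K = F(\alpha)\}$, then $\gamma_\aaa/\gamma_\bbb$ generates a proper subfield of $K$ over $F$. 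This is precisely where the invariant $\del$ enters, and it is the conceptual heart of the argument.

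Next I would quantify this. Take the prime ideals $\BBB_1, \ldots, \BBB_M$ from the hypothesis. Each $\BBB_i$ has norm at most $\absnorm(\Delta(K/F))^\delta$ and is unramified in $K/F$ and not an extension from a proper intermediate field. Since $\BBB_i^\ell$ lies in some fixed power structure, after passing to a subset of size $\gg M / \#Cl_K[\ell]$ we may assume all the $\BBB_i$ lie in the same ideal class, so $\BBB_i \BBB_1^{-1}$ is principal, say $\BBB_i \BBB_1^{-1} = (\gamma_i)$ for a suitable algebraic number — more precisely one works with $\BBB_i^\ell$ being principal or with the relation in $Cl_K[\ell]$ directly. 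The number $\gamma_i$ (or $\gamma_i/\gamma_j$) then has height bounded by a power of $\max_i \absnorm(\BBB_i) \leq \absnorm(\Delta(K/F))^\delta$ times $\DK^{1/2+\varepsilon}$; since $\delta < \gamma/\ell$, this height is strictly smaller than $\absnorm(\Delta(K/F))^\gamma < \del$ once one tracks the exponents carefully (this is the arithmetic that the inequality $\delta < \gamma/\ell$ is designed to make work, with the $\DK^{1/2}$ factor absorbed into the $\DK^{1/2+\varepsilon}$ in the conclusion). Hence each such $\gamma_i/\gamma_j$ generates a proper subfield of $K$ containing $F$.

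Then I would derive a contradiction with the ``not an extension from a proper subfield'' hypothesis unless $M$ is small. If $\gamma_i/\gamma_j \in K_0$ for a proper subfield $F \subseteq K_0 \subsetneq K$, then the ideal $(\gamma_i/\gamma_j) = \BBB_i \BBB_j^{-1}$ is a fractional ideal coming from $K_0$, which forces a relation between the primes $\BBB_i$ and $\BBB_j$ contradicting their being distinct primes not extended from proper subfields — unless $i = j$. Running this over all pairs and over the finitely many intermediate fields $K_0$ (of which there are $O_d(1)$) shows that in each fixed ideal class there can be at most $O_d(1)$ of the $\BBB_i$; summing over the $\leq \#Cl_K[\ell]$ relevant classes and comparing with the lower bound $\gg M/\#Cl_K[\ell]$ on the size of the largest class, one gets $M \ll_{d,\ell,\gamma,\varepsilon} \#Cl_K[\ell] \cdot \DK^{1/2+\varepsilon} \cdot (\#Cl_K[\ell])^{-1}$... — rather, rearranging the counting inequality $\#Cl_K[\ell] \cdot O_d(1) \geq M \cdot (\text{something})$ against the trivial bound yields $\#Cl_K[\ell] \ll_{d,\ell,\gamma,\varepsilon} \DK^{1/2+\varepsilon} M^{-1}$.

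The main obstacle I anticipate is the bookkeeping in the geometry-of-numbers step: one must choose, in each ideal class, an ideal $\aaa$ of norm $\ll \DK^{1/2}$ (Minkowski), write $\aaa^\ell = (\gamma)$, and bound $H_K(\gamma)$ — and then show that after dividing two such generators the resulting height beats $\del$. Keeping the exponents of $\absnorm(\Delta(K/F))$, of $\DK^{1/2}$, and of the $M$-many small primes all straight, and checking that $\delta < \gamma/\ell$ is exactly the condition that makes the relevant height strictly less than $\absnorm(\Delta(K/F))^\gamma$ up to the harmless $\DK^\varepsilon$, is the delicate part; the rest is the rigidity dichotomy, which is conceptually clean. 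Since the paper says this is ``a slightly more precise version'' of \cite[Lemma 2.3]{EllVentorclass}, I would essentially transcribe that argument while tracking the extra precision in the dependence on $M$ and on $\gamma$.
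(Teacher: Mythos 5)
You have correctly isolated the conceptual heart of the Ellenberg--Venkatesh argument---an element of height below $\del$ generates a proper subfield of $K$ over $F$, and the hypothesis that the $\mathfrak{B}_i$ are unramified and not extended from proper subfields then forbids such elements---and your meta-plan (transcribe \cite[Lemma 2.3]{EllVentorclass}, replacing their Lemma 2.2 by the hypothesis $\del>\absnorm(\Delta(K/F))^\gamma$) is exactly what the paper does; its proof is literally that one-line substitution. However, the quantitative skeleton you sketch is not the one that works. First, the height bookkeeping: you let the heights of the relevant elements carry a factor $\DK^{1/2+\varepsilon}$ (via Minkowski representatives $\aaa$ with $\aaa^\ell=(\gamma_\aaa)$) and claim this factor gets ``absorbed'' into the conclusion. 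It cannot be: in the intended applications $\gamma$ is small (e.g.\ $\gamma\approx 1/(2(n-1))$), so $\absnorm(\Delta(K/F))^\gamma$ is a small power of $\DK$, and an element whose height is of size $\DK^{\ell/2}$ can never satisfy $H_K<\del$; the dichotomy would simply never trigger. The elements to use are generators $\alpha$ of $(\mathfrak{B}_i\mathfrak{B}_j^{-1})^\ell$, whose divisor is supported only on the two small primes (with a controlled archimedean part, which is what EV's divisor height handles), so that the height is at most roughly $\max(\absnorm\mathfrak{B}_i,\absnorm\mathfrak{B}_j)^{\ell}\le\absnorm(\Delta(K/F))^{\delta\ell}<\absnorm(\Delta(K/F))^{\gamma}<\del$, with no power of $\DK$ anywhere; the factor $\DK^{1/2+\varepsilon}$ in the conclusion enters solely through the class number bound $h_K\ll_{d,\varepsilon}\DK^{1/2+\varepsilon}$, never through a height.

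Second, the counting is incoherent as written. Pigeonholing the $M$ primes into ideal classes of $Cl_K$ gives a fiber of size $M/h_K$, not $M/\#Cl_K[\ell]$; pigeonholing into $Cl_K/Cl_K^\ell$ (which does have $\#Cl_K[\ell]$ elements) only gives $\mathfrak{B}_i\mathfrak{B}_j^{-1}=(\alpha)\ccc^\ell$ with an uncontrolled ideal $\ccc$, destroying the height bound; and your closing chain (``$O_d(1)$ of the $\mathfrak{B}_i$ per class'', ``summing over the $\leq\#Cl_K[\ell]$ relevant classes'', the struck-out inequality) is mutually inconsistent and does not yield $\#Cl_K[\ell]\ll\DK^{1/2+\varepsilon}M^{-1}$. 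The argument that closes is: consider $i\mapsto[\mathfrak{B}_i]$ in $Cl_K/Cl_K[\ell]$. If $i\neq j$ had the same image, then $(\mathfrak{B}_i\mathfrak{B}_j^{-1})^\ell=(\alpha)$ would be principal with $H_K(\alpha)<\del$, so $K_0:=F(\alpha)$ is a proper subfield containing $F$; comparing valuations of the extension of the $K_0$-ideal generated by $\alpha$ with $\mathfrak{B}_i^\ell\mathfrak{B}_j^{-\ell}$, and using that $\mathfrak{B}_i$ is unramified over $F$ (hence over $K_0$), forces either $\ord_{\mathfrak{B}_j}(\alpha)>0$ (absurd) or $\mathfrak{B}_i=(\mathfrak{B}_i\cap\OO_{K_0})\OO_K$, contradicting the non-extension hypothesis. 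Hence the map is injective and $M\le h_K/\#Cl_K[\ell]\ll_{d,\varepsilon}\DK^{1/2+\varepsilon}/\#Cl_K[\ell]$, which is the stated bound. Note also that this is where the unramifiedness hypothesis is actually used, which your sketch never does. So while your qualitative mechanism matches the paper's (i.e.\ Ellenberg--Venkatesh's) proof, the quantitative steps would need to be replaced along these lines.
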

\begin{proof}
Exactly as in \cite[Lemma 2.3]{EllVentorclass} except that we replace their Lemma 2.2 by the hypothesis $\del>\absnorm(\Delta(K/F))^\gamma$.
\end{proof}

Recall that  $F$ is a number field of degree $m$ with algebraic closure $\Fbar$, and let $n\geq 2$ be an integer. We set
\begin{alignat}1\label{coll}
\SSS_{F,n}:=\{K\subseteq \Fbar; F\subseteq K\text{ and }[K:F]=n\}
\end{alignat}
for the collection of all field extensions of $F$ of degree $n$. For a subset $\SSS\subseteq \SSS_{F,n}$
we set 
\begin{equation*}
\B_\SSS(X;Y,M):=\left\{K\in \SSS;\ \DK\leq X,
\begin{aligned}
&\text{ at most $M$ prime ideals $\p$ in $\Oseen_F$ with}\\ &\text{ $\absnorm(\p)\leq Y$ split completely in $K$}
\end{aligned}
\right\}.
\end{equation*}
The following proposition is a slight adaption of \cite[Proposition 3.1]{ltor}.
The set up is chosen such that it applies in the most forward manner;
in particular, that is the reason why we introduce the  quantity $\tilde{\delta}_0$.

\begin{proposition}\label{mainprop}
Suppose $\SSS\subseteq \SSS_{F,n}$, and suppose $\gamma>0$ and $\theta\geq 0$ are such that
\begin{alignat*}1
\#\{K\in \SSS; \DK\leq X, \del\leq \DK^\gamma\}\ll_{\SSS,\gamma,\theta} X^\theta.
\end{alignat*}
Let $\varepsilon>0$, $\tilde{\delta}_0>0$, $\delta_0:=\min\{\gamma/\ell-2\varepsilon,\tilde{\delta}_0\}$, and $E_{\delta_0,\varepsilon}(\cdot)$ be an increasing function such that 
\begin{alignat*}1
\card\B_\SSS(X;X^{\delta_0},X^{\delta_0-\varepsilon})\leq E_{\delta_0,\varepsilon}(X).
\end{alignat*}

Then we have 
$$\#Cl_K[\ell] \ll_{[K:\IQ],\ell,\gamma,\varepsilon} \DK^{1/2-\delta_0+2\varepsilon}$$
for all but $O_{\SSS,\gamma,\theta,\varepsilon}((\log X)E_{\delta_0,\varepsilon}(X)+X^{\theta})$ fields $K$ in $\SSS$ with $\DK\leq X$.
\end{proposition}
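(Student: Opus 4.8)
The plan is to combine Proposition \ref{keylemma} with the counting hypothesis in a dyadic-decomposition argument. First I would decompose the fields $K\in\SSS$ with $\DK\le X$ into two classes: the ``bad'' fields, those with $\del\le\DK^\gamma$, and the ``good'' fields, those with $\del>\DK^\gamma$. By hypothesis the number of bad fields is $O_{\SSS,\gamma,\theta}(X^\theta)$, so they are absorbed into the exceptional set and can be ignored from now on. For a good field $K$, Proposition \ref{keylemma} applies with the given $\gamma$ and with any $\delta<\gamma/\ell$; in particular it applies with the exponent $\delta_0=\min\{\gamma/\ell-2\varepsilon,\tilde{\delta}_0\}$, which satisfies $\delta_0<\gamma/\ell$. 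The conclusion of that proposition is that $\#Cl_K[\ell]\ll_{[K:\IQ],\ell,\gamma,\varepsilon}\DK^{1/2+\varepsilon}M^{-1}$ as soon as we can exhibit $M$ unramified prime ideals $\mathfrak{B}_1,\ldots,\mathfrak{B}_M$ of $\OO_K$ of norm $\le\absnorm(\Delta(K/F))^{\delta_0}\le\DK^{\delta_0}$ that are not extensions of prime ideals from proper intermediate fields $F\subseteq K_0\subsetneq K$.

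The second step is to produce such primes from small split primes of $F$. If $\p$ is a prime ideal of $\OO_F$ with $\absnorm(\p)\le Y:=X^{\delta_0}$ that splits completely in $K$, then each prime $\mathfrak{B}\mid\p\OO_K$ has $\absnorm(\mathfrak{B})=\absnorm(\p)\le Y$, is unramified in $K/F$, and — because $\p$ splits completely in $K$, hence also in any intermediate field $K_0$ — cannot be of the form $\ppp_0\OO_K$ for a prime $\ppp_0$ of a \emph{proper} subfield $K_0\subsetneq K$ (such an extension would have residue degree $[K:K_0]>1$ over $\ppp_0$, contradicting $f(\mathfrak{B}/\p)=1$). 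Here one uses that $\absnorm(\Delta(K/F))\le\DK$, so the norm condition $\absnorm(\mathfrak{B})\le\DK^{\delta_0}$ is certainly met once $\absnorm(\p)\le X^{\delta_0}$ and $\DK\le X$; one should also note $\delta_0\le\gamma/\ell-2\varepsilon$ so the hypothesis $\delta<\gamma/\ell$ of Proposition \ref{keylemma} holds with room to spare (this is why the final exponent loses a $2\varepsilon$). Thus, if $K$ has at least $M:=X^{\delta_0-\varepsilon}$ prime ideals of $F$ of norm $\le X^{\delta_0}$ splitting completely in it, Proposition \ref{keylemma} gives $\#Cl_K[\ell]\ll\DK^{1/2+\varepsilon}X^{-(\delta_0-\varepsilon)}$. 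Since we are in the regime $\DK\le X$, a crude bound $X\le\DK^{1+\varepsilon'}$ is \emph{not} available, so instead one should argue dyadically: split the range $\DK\in(X/2,X]$ (or run the argument at scale $X$ and sum over dyadic scales $X,X/2,X/4,\ldots$), on each scale the threshold $M=X^{\delta_0-\varepsilon}$ and the power of $\DK$ match up to the harmless $\varepsilon$-factors, giving $\#Cl_K[\ell]\ll_{[K:\IQ],\ell,\gamma,\varepsilon}\DK^{1/2-\delta_0+2\varepsilon}$ for every good field $K$ with enough small split primes.

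The third step is the bookkeeping of the exceptional set. A good field $K$ with $\DK\le X$ fails the desired bound only if it has \emph{fewer} than $X^{\delta_0-\varepsilon}$ primes of $F$ of norm $\le X^{\delta_0}$ splitting completely in it, i.e. only if $K\in\B_\SSS(X;X^{\delta_0},X^{\delta_0-\varepsilon})$; by hypothesis there are at most $E_{\delta_0,\varepsilon}(X)$ such fields at scale $X$. Summing the bound $E_{\delta_0,\varepsilon}(2^{-j}X)\le E_{\delta_0,\varepsilon}(X)$ (using that $E_{\delta_0,\varepsilon}$ is increasing) over the $O(\log X)$ dyadic scales $2^{-j}X\ge 1$ produces the $O((\log X)E_{\delta_0,\varepsilon}(X))$ term; adding the $O_{\SSS,\gamma,\theta,\varepsilon}(X^\theta)$ bad fields gives the claimed total exceptional count. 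I expect the only genuinely delicate point to be the careful matching of exponents across the dyadic decomposition — making sure that the loss from passing between $X$ and $\DK$, the choice $M=X^{\delta_0-\varepsilon}$, and the requirement $\delta_0<\gamma/\ell$ all fit together so that the final exponent is exactly $1/2-\delta_0+2\varepsilon$ — together with the elementary but essential observation that complete splitting in $K$ rules out being an extension from a proper intermediate field. Everything else is a direct quotation of Proposition \ref{keylemma} and a union bound.
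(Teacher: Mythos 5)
Your proposal is correct and follows essentially the same route as the paper, whose proof is exactly the combination of Proposition \ref{keylemma} with a dyadic splitting of the discriminant range (so that split primes of norm up to the dyadic scale have norm at most $\absnorm(\Delta(K/F))^{\delta_0+\varepsilon}$ with $\delta_0+\varepsilon<\gamma/\ell$), plus the union bound over the $O(\log X)$ scales using the monotonicity of $E_{\delta_0,\varepsilon}$. Your observation that complete splitting rules out being an extension of a prime from a proper intermediate field, and your bookkeeping of the exceptional set, match the paper's argument.
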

\begin{proof}
The proof is essentially identical to the one of \cite[Proposition 3.1]{ltor}; in short,
use Proposition \ref{keylemma} and dyadic splitting. 
\end{proof}

\section{Ellenberg, Pierce and Wood's Chebyshev sieve}
In this section we describe the Chebysev sieve recently introduced by
Ellenberg, Pierce and Wood.  This is one of the key ideas in their
work \cite{EllenbergPierceWood} and allows them to show that almost
all number fields of degree $d$ have sufficiently many small splitting
primes, at least if $d\leq 5$ (and excluding $D_4$-fields if $d=4$).
Our sieve setting is slightly more general than the original one in
\cite{EllenbergPierceWood}, but no new arguments are needed.

Let $P(z):=\prod_{\absnorm(\p)\leq z}\p$ be the product of all (non-zero) prime ideals in $\Oseen_F$ of norm below $z$,
and let $\A$ be a finite set of cardinality $N$. To each prime $\p$ we associate a
property $(\p)$ that an element of $\A$ might have or not have.
We put 
\begin{equation*}
\A_\p:=\{a\in \A; a \text{ has property } (\p)\}, 
\end{equation*}
and for distinct primes $\p,\q$ we set $\A_{\p\q}:=\A_\p\cap\A_\q$.
Let $0\leq \delta_\p<1$ and $R_\p$ and $R_{\p,\q}$ such that
\begin{alignat*}1
\#\A_\p&=\delta_\p N+R_\p,\\
\#\A_{\p\q}&=\delta_\p\delta_\q N+R_{\p,\q}.
\end{alignat*}
Furthermore, we introduce 
\begin{alignat*}1
N(a):=\#\{\p|P(z);a\in\A_\p\}
\end{alignat*}
and its mean 
\begin{alignat}1\label{Mean}
M(z):=\frac{1}{N}\sum_{a\in \A}N(a)=\frac{1}{N}\sum_{\p|P(z)}\#\A_\p.
\end{alignat}
The quantity we want to bound from above is the number of $a\in \A$ for which 
$N(a)$ is significantly below the mean $M(z)$. For $M>0$ let us introduce 
\begin{alignat*}1
E(\A;z,M):=\#\{a\in \A; N(a)\leq M\}.
\end{alignat*}

\begin{comment}
Now let us consider $N(\cdot)$ as a random variable by putting the uniform measure on $\A$ so that
\begin{alignat*}1
\Pro(N(\cdot)=x)=\frac{\#\{a\in \A; N(a)=x\}}{N},
\end{alignat*}
and the mean (in the probabilistic sense) of $N(\cdot)$ is $M(z)$. Clearly, we have 
\begin{alignat*}1
E(\A;z,M(z)/2)=\Pro(N(\cdot)\leq M(z)/2)\cdot N\leq \Pro(|N(\cdot)-M(z)|>M(z)/2)\cdot N, 
\end{alignat*}
and hence by Chebyshev's inequality 
we conclude
\begin{alignat}1\label{Ebound}
E(\A;z,M(z)/2)\leq \frac{4N\Var(N(\cdot))}{M(z)^2}.
\end{alignat}
Now by linearity of the expectation we have 
\begin{alignat}1\label{Var}
\Var(N(\cdot))=\frac{1}{N}\left(\sum_{a\in \A}N(a)^2\right)-M(z)^2,
\end{alignat}
and moreover,
\begin{alignat}1\label{Varreform}
\sum_{a\in \A}N(a)^2=\sum_{\p,\q|P(z)}\#\A_\p\cap\A_\q.
\end{alignat}
From (\ref{Var}) and (\ref{Varreform}) we see that precise estimates for the occurring terms $\#\A_\p=\#\A_\p\cap\A_\p$ and $\#\A_\p\cap\A_\q$
yield useful upper bounds for $\Var(N(\cdot))$, and hence, via (\ref{Ebound}), for $E(\A;z,M(z)/2)$, as desired.
\end{comment}
In this setting their statement \cite[Proposition 3.1]{EllenbergPierceWood}  reads as follows.

\begin{lemma}[Ellenberg, Pierce, Wood]\label{sievelemma}
Suppose $M(z)>0$. Then we have 
\begin{alignat*}1
E(\A;z,\frac{1}{2}M(z))\leq\frac{4N}{M(z)^2}\left(U(z)+\frac{1}{N}\sum_{\p,\q|P(z)}|R_{\p,\q}| +\frac{2U(z)}{N}\sum_{\p|P(z)}|R_{\p}|+\left(\frac{1}{N}\sum_{\p|P(z)}|R_{\p}|\right)^2\right),
\end{alignat*}
where $U(z)=\sum_{\p|P(z)}\delta_\p$.
\end{lemma}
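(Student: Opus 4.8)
The plan is to run the standard second-moment (Tur\'an--Chebyshev) argument; as the authors note, this is essentially \cite[Proposition 3.1]{EllenbergPierceWood} and the mild generalisation of the set-up requires no new idea. Equip $\A$ with the uniform probability measure, so that $N(\cdot)$ becomes a random variable; by linearity of expectation and the definition \eqref{Mean}, its mean is $M(z)$. If $a\in\A$ satisfies $N(a)\leq\tfrac12 M(z)$ then, since $M(z)>0$, we have $\abs{N(a)-M(z)}\geq\tfrac12 M(z)$, and hence
\begin{equation*}
  E(\A;z,\tfrac12 M(z)) \leq \card\{a\in\A \where \abs{N(a)-M(z)}\geq\tfrac12 M(z)\} = N\cdot\Pro\bigl(\abs{N(\cdot)-M(z)}\geq\tfrac12 M(z)\bigr) \leq \frac{4N\,\Var(N(\cdot))}{M(z)^2}
\end{equation*}
by Chebyshev's inequality. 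It therefore suffices to bound $\Var(N(\cdot))$ by the bracketed expression in the lemma.

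To this end I would compute the second moment directly: since $N(a)^2$ counts the ordered pairs $(\p,\q)$ of primes dividing $P(z)$ with $a\in\A_\p\cap\A_\q$, summing over $a\in\A$ gives
\begin{equation*}
  \sum_{a\in\A}N(a)^2 = \sum_{\p,\q\mid P(z)}\card(\A_\p\cap\A_\q) = \sum_{\p\mid P(z)}\card\A_\p + \sum_{\substack{\p,\q\mid P(z)\\ \p\neq\q}}\card\A_{\p\q}.
\end{equation*}
Inserting $\card\A_\p=\delta_\p N+R_\p$ and $\card\A_{\p\q}=\delta_\p\delta_\q N+R_{\p,\q}$ and using $\sum_{\p\neq\q}\delta_\p\delta_\q = U(z)^2-\sum_\p\delta_\p^2$, the right-hand side equals $NU(z)+NU(z)^2-N\sum_\p\delta_\p^2+\sum_\p R_\p+\sum_{\p\neq\q}R_{\p,\q}$. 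On the other hand $M(z)=U(z)+\tfrac1N\sum_\p R_\p$, so $M(z)^2=U(z)^2+\tfrac{2U(z)}{N}\sum_\p R_\p+\bigl(\tfrac1N\sum_\p R_\p\bigr)^2$.

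Subtracting, the two $U(z)^2$ contributions cancel and one is left with
\begin{equation*}
  \Var(N(\cdot)) = U(z) - \sum_\p\delta_\p^2 + \frac1N\sum_{\p\neq\q}R_{\p,\q} + \frac1N\sum_\p R_\p - \frac{2U(z)}{N}\sum_\p R_\p - \Bigl(\frac1N\sum_\p R_\p\Bigr)^2.
\end{equation*}
Now I bound this from above: the quantities $\sum_\p\delta_\p^2$ and $\bigl(\tfrac1N\sum_\p R_\p\bigr)^2$ occur with a minus sign and may simply be discarded; the triangle inequality replaces $R_\p$ and $R_{\p,\q}$ by their absolute values; and $-\tfrac{2U(z)}{N}\sum_\p R_\p\leq\tfrac{2U(z)}{N}\sum_\p\abs{R_\p}$. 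Grouping the diagonal contribution $\tfrac1N\sum_\p\abs{R_\p}$ with $\tfrac1N\sum_{\p\neq\q}\abs{R_{\p,\q}}$ into $\tfrac1N\sum_{\p,\q\mid P(z)}\abs{R_{\p,\q}}$ (reading $R_{\p,\p}:=R_\p$, consistent with $\A_{\p\p}=\A_\p$) and adding back the harmless term $\bigl(\tfrac1N\sum_\p\abs{R_\p}\bigr)^2$ recovers exactly the claimed bound for $\Var(N(\cdot))$, completing the proof.

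There is no genuine obstacle here: the whole argument is a routine moment computation. The only point requiring a little care is the bookkeeping of diagonal versus off-diagonal prime pairs in $\sum_{\p,\q}\card(\A_\p\cap\A_\q)$, together with the observation that the two leftover terms $-\sum_\p\delta_\p^2$ and $-\bigl(\tfrac1N\sum_\p R_\p\bigr)^2$ really do carry the favourable sign, so that discarding them is legitimate.
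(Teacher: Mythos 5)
Your argument is correct and is precisely the Chebyshev/second-moment computation that the paper (deferring to Ellenberg--Pierce--Wood's Proposition 3.1) has in mind: Chebyshev's inequality reduces the lemma to bounding $\Var(N(\cdot))$, and your expansion of $\sum_a N(a)^2$ via $\card\A_\p=\delta_\p N+R_\p$ and $\card\A_{\p\q}=\delta_\p\delta_\q N+R_{\p,\q}$, discarding the two nonpositive terms, gives exactly the stated bound. Your bookkeeping of the diagonal pairs, with the convention $R_{\p,\p}:=R_\p$ so that the diagonal contribution is absorbed into $\sum_{\p,\q\mid P(z)}\abs{R_{\p,\q}}$, is also the intended reading of that double sum, so nothing further is needed.
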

\begin{proof}
The proof is exactly the same as in  \cite{EllenbergPierceWood}.
\end{proof}

\subsection{Application of the Chebyshev sieve}\label{AppCheb}\ 

Let $\SSS\subseteq \SSS_{F,n}$, let $\E$ be a finite set of prime ideals
$\p$ in $\Oseen_F$, and set
\begin{equation*}
 \A:=\{K\in \SSS; \DK\leq X\}.
\end{equation*}
 For (non-zero)
prime ideals $\p$ in $\Oseen_F$ outside of $\E$ we let the property
$(\p)$ be ``$\p$ splits completely'', so that $\A_\p$ is the set of
fields $K$ in $\A$ in which $\p$ splits completely, and we set
$\A_\p=\emptyset$ if $\p\in \E$.  Let $\ef=\p$ or $\ef=\p\q$ for
distinct prime ideals $\p$ and $\q$ in $\Oseen_F$.  Put
$N_\SSS(X):=\#\A$, $N_\SSS(\ef;X):=\#\A_\p$ if $\ef=\p$ and
$N_\SSS(\ef;X):=\#(\A_\p\cap\A_\q)$ if $\ef=\p\q$.  Suppose that $c_\SSS >0$,
$0\leq \tau<\ro\leq 1$, and $\sigma\geq 0$, and that we have 
\begin{alignat}1\label{Scounting1}
N_\SSS(X)&=c_\SSS X^{\ro}+ O_{\SSS,\varepsilon}\left(X^{\tau+\varepsilon}\right), \\ 
\label{Scounting2}
N_\SSS(\ef;X)&=\delta_\ef c_\SSS X^{\ro}+ O_{\SSS,\varepsilon}\left((\absnorm(\ef))^{\sigma}X^{\tau+\varepsilon}\right), 
\end{alignat}
where $\delta_\ef$ is a multiplicative function with
$1\ll_{\SSS}\delta_\p\leq 1$ if $\p\notin \E$ and $\delta_\p=0$ if $\p\in \E$.

Note that with $N:=N_\SSS(X)$ we have 
\begin{alignat*}1
N_\SSS(\ef;X)&=\delta_\ef N+ O_{\SSS,\varepsilon}\left((\absnorm(\ef))^{\sigma}X^{\tau+\varepsilon}\right), 
\end{alignat*}
and hence,
\begin{alignat*}1
|R_\p|&=O_{\SSS,\varepsilon}\left((\absnorm(\p))^{\sigma}X^{\tau+\varepsilon}\right), \\
|R_{\p,\q}|&=O_{\SSS,\varepsilon}\left((\absnorm(\p\q))^{\sigma}X^{\tau+\varepsilon}\right).
\end{alignat*}

\begin{lemma}\label{sieveapplication}
Suppose that $\varepsilon>0$ and 
\begin{alignat*}1
\delta_0\leq\frac{\ro-\tau}{1+2\sigma}.
\end{alignat*}
Then we have
\begin{alignat*}1
\frac{X^{\delta_0}}{\log X}\ll_{\SSS,\delta_0,\E}M(X^{\delta_0})\ll_{\SSS,\delta_0,\E}\frac{X^{\delta_0}}{\log X},
\end{alignat*}
and 
\begin{alignat*}1
E(\A;X^{\delta_0},\frac{1}{2}M(X^{\delta_0}))\ll_{\SSS,\delta_0,\E,\varepsilon}X^{\ro-\delta_0+\varepsilon},
\end{alignat*}
provided $X$ is large enough in terms of $\SSS,\delta_0,\E,$ and $\varepsilon$.
\end{lemma}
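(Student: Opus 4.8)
The plan is to derive both assertions from the Chebyshev sieve inequality in Lemma \ref{sievelemma}, once the size of the mean $M(X^{\delta_0})$ has been pinned down. Throughout I would write $z := X^{\delta_0}$ and $N := N_\SSS(X) = \card\A$, take $X$ large (with all constants allowed to depend on $\SSS,\delta_0,\E,\varepsilon$), and apply the counting hypotheses (\ref{Scounting1}), (\ref{Scounting2}) with a fixed auxiliary exponent $\varepsilon'>0$, chosen small in terms of $\delta_0$ and $\varepsilon$, in place of their $\varepsilon$.

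First I would establish $M(X^{\delta_0}) \asymp X^{\delta_0}/\log X$. Since $c_\SSS>0$ and $\tau<\ro$, (\ref{Scounting1}) gives $N\asymp_\SSS X^\ro$. Because $\delta_\p=0$ for the finitely many $\p\in\E$, while $\delta_\p$ is bounded below by a positive constant and above by $1$ for $\p\notin\E$, the quantity $U(z)=\sum_{\p\mid P(z)}\delta_\p$ is comparable to the number of prime ideals of $\Oseen_F$ of norm at most $z$, which by the prime ideal theorem for $F$ is $\asymp z/\log z$. Using (\ref{Scounting2}) together with the trivial bound $\sum_{\absnorm(\p)\le z}\absnorm(\p)^\sigma \ll_F z^{1+\sigma}$ (there being $\ll_F z$ such prime ideals, each of norm $\le z$), I would obtain
\begin{equation*}
  \frac1N\sum_{\p\mid P(z)}|R_\p| \ll X^{-\ro}z^{1+\sigma}X^{\tau+\varepsilon'} = X^{\delta_0(1+\sigma)+\tau-\ro+\varepsilon'},
\end{equation*}
and the hypothesis $\delta_0(1+2\sigma)\le\ro-\tau$ makes the exponent on the right strictly smaller than $\delta_0$. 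Since $M(z)=U(z)+\tfrac1N\sum_{\p\mid P(z)}R_\p$ by (\ref{Mean}), this error is $o(z/\log z)$, so $M(X^{\delta_0})\asymp_{\SSS,\delta_0,\E}X^{\delta_0}/\log X$ for $X$ large; in particular $M(X^{\delta_0})>0$, which is the first assertion and also makes Lemma \ref{sievelemma} applicable.

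For the second assertion I would feed $N$, $U(z)$, $M(z)$, $\tfrac1N\sum_\p|R_\p|$ and $\tfrac1N\sum_{\p,\q\mid P(z)}|R_{\p,\q}| \ll X^{-\ro}\bigl(\sum_{\absnorm(\p)\le z}\absnorm(\p)^\sigma\bigr)^2X^{\tau+\varepsilon'} = X^{2\delta_0(1+\sigma)+\tau-\ro+\varepsilon'}$ (the diagonal $\p=\q$ in this double sum contributing at most $\ll N\cdot z/\log z$, which after division by $N$ is absorbed by $U(z)$) into Lemma \ref{sievelemma}. The prefactor is $\tfrac{4N}{M(z)^2}\asymp X^{\ro-2\delta_0}(\log X)^2$, and multiplying it against the four bracketed terms $U(z)$, $\tfrac1N\sum_{\p,\q}|R_{\p,\q}|$, $\tfrac{2U(z)}{N}\sum_\p|R_\p|$ and $\bigl(\tfrac1N\sum_\p|R_\p|\bigr)^2$ produces quantities of sizes $X^{\ro-\delta_0}\log X$, $X^{2\delta_0\sigma+\tau+\varepsilon'}(\log X)^2$, $X^{\delta_0\sigma+\tau+\varepsilon'}\log X$ and $X^{2\delta_0\sigma+2\tau-\ro+2\varepsilon'}(\log X)^2$ respectively. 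The hypothesis $\delta_0(1+2\sigma)\le\ro-\tau$ is precisely the statement $2\delta_0\sigma+\tau\le\ro-\delta_0$, and it also yields $\delta_0\sigma+\tau\le\ro-\delta_0$ (since $\sigma\ge0$) and $2\delta_0\sigma+2\tau-\ro\le\ro-\delta_0$ (using $\ro-\tau\le 2(\ro-\tau)$); hence each of the four quantities is $\ll X^{\ro-\delta_0+\varepsilon}$ for $X$ large, and Lemma \ref{sievelemma} gives $E(\A;X^{\delta_0},\tfrac12 M(X^{\delta_0}))\ll X^{\ro-\delta_0+\varepsilon}$.

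I expect no genuine obstacle: the core of the proof is the exponent bookkeeping above, and the particular shape $\delta_0\le(\ro-\tau)/(1+2\sigma)$ of the hypothesis is dictated by the $\sum_{\p,\q}|R_{\p,\q}|$ term, which is the worst of the four contributions. The only places that need a little care are the lower bound $U(z)\gg z/\log z$ (where one combines $\delta_\p\gg_\SSS 1$ with the prime ideal theorem and the finiteness of $\E$) and the harmless diagonal $\p=\q$ in the double sum over $R_{\p,\q}$; the auxiliary exponent $\varepsilon'$ and the fixed powers of $\log X$ are swallowed by $X^\varepsilon$ at the end, which is why one wants $X$ large in terms of $\SSS,\delta_0,\E,\varepsilon$.
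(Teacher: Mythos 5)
Your proposal is correct and follows essentially the same route as the paper's proof (which adapts \cite[Proposition 6.1]{EllenbergPierceWood}): bound $\frac1N\sum_\p|R_\p|$ and $\frac1N\sum_{\p,\q}|R_{\p,\q}|$ via \eqref{Scounting1}--\eqref{Scounting2}, get $U(z)\asymp z/\log z$ from the prime ideal theorem and $\delta_\p\gg_\SSS 1$, deduce $M(X^{\delta_0})\asymp X^{\delta_0}/\log X$ since the hypothesis on $\delta_0$ makes the error term negligible, and then feed everything into Lemma \ref{sievelemma} with the same exponent bookkeeping. Your explicit four-term verification (and the remark on the diagonal $\p=\q$) just spells out what the paper compresses into ``simplifying terms yields''.
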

\begin{proof}
We follow the proof of \cite[Proposition 6.1]{EllenbergPierceWood} with the obvious modifications. In this proof the implicit constants in the Vinogradov-symbols and in the $O(\cdot)$-notation depend only on $\SSS,\delta_0,\E,$ and $\varepsilon$. First we note that 
\begin{alignat*}1
\frac{1}{\#\A}\sum_{\p| P(z)}|R_\p|\ll z^{1+\sigma}X^{\tau-\ro+\varepsilon}, \quad \frac{1}{\#\A}\sum_{\p,\q| P(z)}|R_{\p,\q}|\ll z^{2+2\sigma}X^{\tau-\ro+\varepsilon}.
\end{alignat*}
Now 
\begin{alignat*}1
U(z)=\sum_{\p| P(z)}\delta_\p=\sum_{\p| P(z)\atop \p\notin \E}\delta_\p,
\end{alignat*}
and using Landau's prime ideal theorem we find that for $z\gg 1$ we have
\begin{alignat*}1
c_0z(\log z)^{-1}\leq U(z)\leq 2z(\log z)^{-1},
\end{alignat*}
for a constant $c_0>0$ depending on $\SSS$ and $\E$. Now we use (\ref{Mean}) to compute the mean
\begin{alignat*}1
M(z)=U(z)+\frac{1}{\#\A}\sum_{\p| P(z)}|R_\p|=U(z)+O_{\SSS,\varepsilon}(z^{1+\sigma}X^{\tau-\ro+\varepsilon}).
\end{alignat*}
From now on we assume that $z=X^{\delta_0}$. Noting that  $\delta_0\leq \frac{\ro-\tau}{1+2\sigma}<\frac{\ro-\tau}{\sigma}$ we conclude that for sufficiently large $X$ the last error term is bounded by $U(z)/2$, and hence  for 
$X\gg 1$ we get  
\begin{alignat*}1
c_1X^{\delta_0}(\log X)^{-1}\leq \frac{1}{2}U(X^{\delta_0})\leq M(X^{\delta_0})\leq \frac{3}{2}U(X^{\delta_0})\leq c_2X^{\delta_0}(\log X)^{-1}
\end{alignat*}
for constants $0<c_1<c_2\leq 1$ depending only on $\SSS$, $\delta_0$ and $\E$. Applying Lemma \ref{sievelemma} and simplifying terms yields 
\begin{alignat*}1
E(\A;X^{\delta_0},\frac{1}{2}M(X^{\delta_0}))\ll X^\varepsilon(X^{\ro-\delta_0}+X^{2\sigma\delta_0+\tau})\leq 2X^{\ro-\delta_0+\varepsilon}.
\end{alignat*}
\end{proof}
We conclude from Lemma \ref{sieveapplication}
that for $X\gg_{\SSS,\delta_0,\E,\varepsilon} 1$ we have $E(\A;X^{\delta_0},X^{\delta_0-\varepsilon})\leq E(\A;X^{\delta_0},\frac{1}{2}M(X^{\delta_0}))$,
and thus
\begin{alignat}1\label{Eestimate}
E(\A;X^{\delta_0},X^{\delta_0-\varepsilon})\ll_{\SSS,\delta_0,\E,\varepsilon} X^{\ro-\delta_0+\varepsilon}.
\end{alignat}

\section{Proofs of Theorems \ref{introthm:impGRH} and \ref{Thm3}}
In this section we fix $\SSS\subseteq \SSS_{F,n}$ and $\E$, and we suppose that we are in exactly the same setting as in \S\ref{AppCheb}. In particular, we have \eqref{Scounting1} and \eqref{Scounting2}, with a multiplicative function $\delta_\ef$ that satisfies $1\ll_{\SSS}\delta_\p\leq 1$ if $\p\notin \E$ and $\delta_\p=0$ if $\p\in \E$. Furthermore, we set 
\begin{alignat}1\label{defdeltatilde}
\tilde{\delta}_0:=\frac{\ro-\tau}{1+2\sigma}.
\end{alignat}

\begin{proposition}\label{mainprop1}
Let $\varepsilon>0$. Then we have 
\begin{alignat}1\label{mainprop1bound}
\#Cl_K[\ell] \ll_{\SSS,\ell,\varepsilon} \DK^{1/2-\min\{\frac{1}{2\ell(n-1)},\tilde{\delta}_0\}+\varepsilon}
\end{alignat}
for all but $O_{\SSS,\tilde{\delta}_0,\E,\varepsilon}(X^{\ro-\min\{\frac{1}{2\ell(n-1)},\tilde{\delta}_0\}+\varepsilon})$ fields $K$ in $\SSS$ with $\DK\leq X$.
\end{proposition}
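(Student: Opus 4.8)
The plan is to combine Proposition \ref{mainprop} with the sieve estimate \eqref{Eestimate} obtained at the end of \S\ref{AppCheb}, the only missing ingredient being a bound on the number of fields $K\in\SSS$ of bounded discriminant with small $\del$. First I would invoke Proposition \ref{mainprop} with the parameter $\gamma$ taken to be (a constant multiple of) the exponent governing the generic lower bound for $\del$; concretely, one needs $\gamma>0$ and $\theta\geq 0$ with
\begin{equation*}
  \card\{K\in\SSS;\ \DK\leq X,\ \del\leq \DK^\gamma\}\ll_{\SSS,\gamma,\theta} X^\theta,
\end{equation*}
and moreover $\theta<\ro$ so that this exceptional set is negligible compared to the main term $c_\SSS X^\ro$. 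For families of degree-$n$ extensions such a bound is available from the known lower bounds for $\del$ in terms of the relative discriminant (e.g.\ from \cite{VaalerWidmer2013,VaalerWidmer} or \cite{8,Art1}); since $\DK$ and $\absnorm(\Delta(K/F))$ differ only by a bounded power of $\DK$ (because $[F:\QQ]$ is fixed), a suitable $\gamma$ and $\theta$ can be chosen, and one takes $\gamma$ small enough that the quantities match up.

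Next I would feed the sieve output into Proposition \ref{mainprop}. Setting $\tilde{\delta}_0$ as in \eqref{defdeltatilde} and $\delta_0:=\min\{\gamma/\ell-2\varepsilon,\tilde{\delta}_0\}$, Lemma \ref{sieveapplication} and the displayed estimate \eqref{Eestimate} give
\begin{equation*}
  \card\B_\SSS(X;X^{\delta_0},X^{\delta_0-\varepsilon})\leq E(\A;X^{\delta_0},X^{\delta_0-\varepsilon})\ll_{\SSS,\delta_0,\E,\varepsilon} X^{\ro-\delta_0+\varepsilon},
\end{equation*}
so we may take $E_{\delta_0,\varepsilon}(X)=C X^{\ro-\delta_0+\varepsilon}$ as the increasing majorant required in Proposition \ref{mainprop}. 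That proposition then yields
\begin{equation*}
  \#Cl_K[\ell]\ll_{[K:\QQ],\ell,\gamma,\varepsilon}\DK^{1/2-\delta_0+2\varepsilon}
\end{equation*}
for all but $O_{\SSS,\gamma,\theta,\varepsilon}((\log X)X^{\ro-\delta_0+\varepsilon}+X^\theta)$ fields $K\in\SSS$ with $\DK\leq X$. Absorbing the $\log X$ into $X^\varepsilon$, and using $\theta<\ro$ together with $\delta_0\leq\tilde{\delta}_0$ and $\delta_0\leq\gamma/\ell$, the exceptional count is $O(X^{\ro-\delta_0+\varepsilon})$, which is $O(X^{\ro-\min\{1/(2\ell(n-1)),\tilde\delta_0\}+\varepsilon})$ once we verify that $\delta_0$ can be taken equal to $\min\{1/(2\ell(n-1)),\tilde\delta_0\}$.

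The one genuine point to check, and the main (though minor) obstacle, is this last matching of exponents: Proposition \ref{mainprop} produces the exponent $\delta_0=\min\{\gamma/\ell-2\varepsilon,\tilde\delta_0\}$, whereas the statement of Proposition \ref{mainprop1} asks for $\min\{1/(2\ell(n-1)),\tilde\delta_0\}$. One therefore needs $\gamma/\ell\geq 1/(2\ell(n-1))$, i.e.\ $\gamma\geq 1/(2(n-1))$, to come out of the lower bound for $\del$; this is exactly the content of the known estimate $\del\gg_{d}\absnorm(\Delta(K/F))^{1/(2(n-1))}$ for degree-$n$ extensions $K/F$ up to a negligible set, so the hypothesis of Proposition \ref{mainprop} on the number of fields with $\del\leq\DK^\gamma$ holds with $\gamma$ slightly below $1/(2(n-1))$ and $\theta<\ro$. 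Taking $\varepsilon$ small then replaces the $-2\varepsilon$ loss by a harmless $+O(\varepsilon)$ in the final exponent, and renaming $\varepsilon$ gives precisely \eqref{mainprop1bound} together with the claimed bound on the exceptional set. Everything else is bookkeeping: tracking that all implied constants depend only on the allowed parameters (which is automatic since $[K:\QQ]=mn$ is determined by $F$ and $n$, hence by $\SSS$), and noting that $\ro\leq 1$ so the error term is at worst $X^{\ro}$ as required.
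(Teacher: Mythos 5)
Your argument is correct and takes essentially the same route as the paper's proof: bound $\card\B_\SSS(X;X^{\delta_0},X^{\delta_0-\varepsilon})$ by $E(\A;X^{\delta_0},X^{\delta_0-\varepsilon})$ via \eqref{Eestimate}, verify the hypothesis of Proposition \ref{mainprop} using the Ellenberg--Venkatesh/Silverman bound $\del\gg_{F,n}\DK^{1/(2(n-1))}$ (so that one may take $\theta=0$), and then let the $\varepsilon$-slack absorb the small loss in the exponent. Your choice of $\gamma$ slightly below $1/(2(n-1))$, which forces $\DK$ to be bounded on the exceptional set and hence gives $\theta=0$ by Hermite, is the same idea as the paper's choice of parameters and is the cleaner way to run the bookkeeping.
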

\begin{proof}
We have $\card\B_\SSS(X;Y,M)\leq E(\A;Y,M)$, so that (\ref{Eestimate}) provides the required bound for  $\card \B_\SSS(X;X^{\delta_0},X^{\delta_0-\varepsilon})$.
Furthermore, from  \cite[Lemma 2.2]{EllVentorclass} (see also \cite{9} for an older and more
general result) we have $\del\gg_{F,n} \DK^{1/(2(n-1))}$.
Hence we can apply Proposition \ref{mainprop} with $\gamma=1/(2(n-1))+\varepsilon$, $\theta=0$, and $\tilde{\delta}_0$ as defined in (\ref{defdeltatilde}).
This completes the proof of Proposition \ref{mainprop1}.
\end{proof}

\subsection{Proof of Theorem \ref{Thm3}} Now we restrict ourselves to
those families for which Theorem \ref{thm:count_quad_general} provides
the required asymptotic formulas \eqref{Scounting1} and
\eqref{Scounting2}. For every place $v\mid n\infty$ of $F$, let $M_v$
be a set of Galois-extensions $K\subseteq \Fbar_v$ of $F_v$ with
cyclic Galois group of order dividing $n$, and assume that
$F_v\in M_v$. For $K\in \totram$, we write $K_v\in M_v$ if the
completion $K_w$ at any place $w$ of $K$ above $v$ is $F_v$-isomorphic
to a field in $M_v$. Writing $M=(M_v)_{v\mid n\infty}$, we consider
the family
\begin{equation*}
  \totram(M) := \{K\in\totram;\ K_v\in M_v \text{ for all $v\mid n\infty$}\}.
\end{equation*}
   We define 
  \begin{equation*}
    \tilde{\delta}'=\tilde{\delta}'(m,n):=\frac{b}{(n-1)(1+2a)},
  \end{equation*}
  where $a=a(m,n)$ and $b=b(m,n)$ are defined in Theorem
  \ref{thm:count_quad_general}. Using the bounds \eqref{eq:a_b_bounds}, one sees easily
  that $\tilde{\delta}' \geq \tilde{\delta}$, where $\tilde{\delta}$ is defined in (\ref{def:deltatilde}). Hence, the following theorem is a more precise and slightly more general version of Theorem \ref{Thm3}.

\begin{theorem}\label{thm:general}
Suppose $F$ and $\QQ(\mu_n(\Fbar))$ are linearly disjoint over $\QQ$, and $\varepsilon>0$. Then for all but $O_{F,n,\varepsilon}(X^{\frac{1}{n-1}-\min\{\frac{1}{2\ell(n-1)},\tilde{\delta}'\}+\varepsilon})$ fields $K$ in  $\totram(M)$ with $\DK\leq X$, we have 
\begin{alignat*}1
\#Cl_K[\ell] \ll_{F,n,\ell,\varepsilon} \DK^{\frac{1}{2}-\min\{\frac{1}{2\ell(n-1)},\tilde{\delta}'\}+\varepsilon}.
\end{alignat*}
\end{theorem}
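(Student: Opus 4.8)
The plan is to derive Theorem \ref{thm:general} by feeding the asymptotic formula of Theorem \ref{thm:count_quad_general} into Proposition \ref{mainprop1}; almost all of the real work has already been done, and what remains is mostly bookkeeping. The first step is to rephrase the local conditions defining $\totram(M)$ in the homomorphism language of Theorem \ref{thm:count_quad_general}, which is precisely the reason for working with the general version \ref{thm:count_quad_general} rather than \ref{thm:count_quad}. For each place $v\mid n\infty$ I would put
\begin{equation*}
  \Lambda_v:=\{\varphi_v\in\Hom(\Gal(\Fbar_v/F_v),G)\where K_{\varphi_v}\in M_v\}.
\end{equation*}
Since $K_{\varphi_v}$ depends only on $\ker\varphi_v$, each $\Lambda_v$ is a union of $\Aut(G)$-orbits, and the hypothesis $F_v\in M_v$ ensures $1\in\Lambda_v$, as required in Theorem \ref{thm:count_quad_general}. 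With $\Lambda:=(\Lambda_v)_{v\mid n\infty}$, every $K\in\totram(M)$ is induced by exactly $\phi(n)=\card\Aut(G)$ homomorphisms $\varphi\in\totram(\Lambda)$, so that $N_{\totram}(\Lambda,\PPP;X)=\phi(n)\,N_{\totram(M)}(\PPP;X)$, and the same identity holds for the counting functions in which splitting is additionally imposed at one or two primes.

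Next I would set $\SSS:=\totram(M)$ and $\E:=\{\ppp\where\ppp\mid n\}$, and for $\ppp\notin\E$ let $(\ppp)$ be the property ``$\ppp$ splits completely in $K$'', placing us in exactly the framework of \S\ref{AppCheb}. Using $\DK=D_F^{n}\absnorm\Delta(K/F)$ to rescale $X$ by the constant $D_F^{n}$, and dividing the formula of Theorem \ref{thm:count_quad_general} by $\phi(n)$, one checks that \eqref{Scounting1} and \eqref{Scounting2} hold with
\begin{equation*}
  \ro=\frac{1}{n-1},\qquad \tau=\frac{1-b}{n-1},\qquad \sigma=a,
\end{equation*}
where $a=a(m,n)$ and $b=b(m,n)$ are as in Theorem \ref{thm:count_quad_general}; here $c_\SSS>0$ is an explicit positive constant and $\delta_\ef$ is the multiplicative function of \eqref{eq:lead_const}, so that $1\ll_{n}\delta_\ppp\leq 1$ for $\ppp\notin\E$. (The extra factor $\absnorm(\ppp_1\cdots\ppp_\npid)^{\varepsilon}$ in the error term of Theorem \ref{thm:count_quad_general} is harmless, since only primes of norm at most $X^{2\delta_0}$ enter the sieve of \S\ref{AppCheb} and the factor can therefore be absorbed into $X^{\varepsilon}$.) As $0<b\leq 1$ and $n\geq 2$ we have $0\leq\tau<\ro\leq 1$ and $\sigma\geq 0$, so all hypotheses of \S\ref{AppCheb} are met, and \eqref{defdeltatilde} gives
\begin{equation*}
  \tilde{\delta}_0=\frac{\ro-\tau}{1+2\sigma}=\frac{b/(n-1)}{1+2a}=\frac{b}{(n-1)(1+2a)}=\tilde{\delta}'.
\end{equation*}

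Finally I would invoke Proposition \ref{mainprop1} with this $\tilde{\delta}_0=\tilde{\delta}'$. Since there are only finitely many admissible $M$ once $F$ and $n$ are fixed, and $\tilde{\delta}'$ depends only on $m$ and $n$, all implied constants depend only on $F$, $n$, $\ell$, $\varepsilon$; Proposition \ref{mainprop1} then yields $\#Cl_K[\ell]\ll_{F,n,\ell,\varepsilon}\DK^{1/2-\min\{1/(2\ell(n-1)),\tilde{\delta}'\}+\varepsilon}$ for all but $O_{F,n,\varepsilon}(X^{1/(n-1)-\min\{1/(2\ell(n-1)),\tilde{\delta}'\}+\varepsilon})$ fields $K\in\totram(M)$ with $\DK\leq X$, which is precisely the assertion. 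The genuine input is split between Theorem \ref{thm:count_quad_general}, which supplies the power-saving error term with explicit polynomial dependence on $\absnorm(\ppp_1\cdots\ppp_\npid)$, and Proposition \ref{mainprop1}, which couples the Chebyshev sieve of \S\ref{AppCheb} with the Ellenberg--Venkatesh key lemma (Proposition \ref{keylemma}) and the height lower bound $\del\gg_{F,n}\DK^{1/(2(n-1))}$. Accordingly, the only steps demanding care are the $\phi(n)$-factor relating the homomorphism count to the field count and the verification that the local data defining $\totram(M)$ correspond to an admissible family $\Lambda$ with $1\in\Lambda_v$; I expect these, rather than any fresh analytic difficulty, to be the main thing to get right.
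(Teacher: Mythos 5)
Your proposal is correct and takes essentially the same route as the paper: set $\SSS=\totram(M)$ and $\E$ the primes dividing $n$, read off \eqref{Scounting1} and \eqref{Scounting2} from Theorem \ref{thm:count_quad_general} with $\ro=1/(n-1)$ and $\tau=(1-b)/(n-1)$, and apply Proposition \ref{mainprop1} to get $\tilde{\delta}_0=\tilde{\delta}'$. The only (immaterial) difference is that the paper takes $\sigma=a+\varepsilon'$ and lets $\varepsilon'\to 0$ instead of absorbing the factor $\absnorm(\ppp_1\cdots\ppp_\npid)^{\varepsilon}$ into $X^{\varepsilon}$ via the bounded norm of the sieve primes as you do, and it leaves the $\Lambda_v$/$\phi(n)$ bookkeeping implicit, which you spell out correctly.
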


\begin{proof}
  Take $\SSS:=\totram(M)$ and $\E$ to be the set of those prime ideals in
  $\Oseen_F$ that divide the ideal $n\Oseen_F$. Then by Theorem
  \ref{thm:count_quad_general} we have (\ref{Scounting1}) and (\ref{Scounting2}) with
  $\ro=1/(n-1)$, $\sigma=a+\varepsilon'$ and $\tau=(1-b)/(n-1)$, and
  $\delta_\ef$ is a multiplicative function with $\delta_\p$ as defined in
  (\ref{eq:lead_const}) if $\p\notin \E$ and $\delta_\p=0$ otherwise.
  Applying Proposition \ref{mainprop1} with 
  \begin{alignat*}1
\tilde{\delta}_0=\frac{\ro-\tau}{1+2\sigma}=\frac{b}{(n-1)(1+2a+2\varepsilon')}
\end{alignat*}

  and using that $\varepsilon'>0$ can be chosen arbitrarily small proves the theorem.
\end{proof}

\subsection{Proof of Theorem \ref{introthm:impGRH}: improving the GRH-bound}\label{SecImpGRH}

Here we lay out a general strategy to improve upon the GRH-bound
building on an idea from \cite{ltor}.  We still fix
$\SSS\subseteq \SSS_{F,n}$ and $\E$, and we continue to assume that we
have (\ref{Scounting1}) and (\ref{Scounting2}). The idea is to show
that for ``most'' extensions $K/F$ the lower bound for the crucial
quantity $\del$ is significantly bigger than Silverman's bound
$\DK^{1/(2(n-1))}$, and then to capitalise on this via Proposition
\ref{mainprop}.

We introduce the set of elements in $\Fbar$ that generate a field in $\SSS$
\begin{alignat*}1
P_\SSS :=\{\alpha\in \Fbar;F(\alpha)\in \SSS\},
\end{alignat*}
and its counting function
\begin{alignat*}1
N_H(P_\SSS, X):=\#\{\alpha\in P_\SSS ;H_{F(\alpha)}(\alpha)\leq X\}.
\end{alignat*}
If we have a good upper bound on this counting function then we can improve the
exponent in (\ref{mainprop1bound}).
\begin{proposition}\label{impGRH}
Suppose $\lambda$ is a real number such that
\begin{alignat*}1
N_H(P_\SSS, X)\ll_{\SSS,\lambda} X^\lambda,
\end{alignat*}
and suppose  $\gamma\geq 0$, and $\varepsilon>0$.
Then we have  
$$\#Cl_K[\ell] \ll_{\SSS,\ell,\gamma,\varepsilon} \DK^{1/2-\min\{\frac{\gamma}{\ell},\tilde{\delta}_0\}+\varepsilon}$$
for all but $O_{\SSS,\tilde{\delta}_0,\gamma,\lambda,\E,\varepsilon}(X^{\ro-\min\{\frac{\gamma}{\ell},\tilde{\delta}_0\}+\varepsilon}+X^{\lambda\gamma})$ fields $K$ in $\SSS$ with $\DK\leq X$.
\end{proposition}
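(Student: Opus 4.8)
The approach is to run exactly the argument of Proposition \ref{mainprop1}, but with the Silverman-type lower bound $\del\gg_{F,n}\DK^{1/(2(n-1))}$ (the only place where information about $\del$ entered there) replaced by information extracted from the hypothesis on $N_H(P_\SSS,X)$. The point, which goes back to \cite{ltor}, is that an upper bound for the number of bounded-height generators of fields in $\SSS$ immediately bounds the number of fields $K\in\SSS$ whose minimal generating height $\del$ is small; for all the remaining fields one has $\del>\DK^\gamma$, which is precisely the hypothesis required by Proposition \ref{mainprop}.

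\emph{Step 1: few fields with small $\del$.} I would first show that
\[
\card\{K\in\SSS;\ \DK\leq X,\ \del\leq\DK^\gamma\}\ll_{\SSS,\gamma,\lambda}X^{\lambda\gamma}.
\]
If $K$ is such a field, then $K/F$ is separable, so the set of heights $H_K(\alpha)$ over generators $\alpha$ of $K/F$ is non-empty; being bounded below by $1$ and containing only finitely many elements below any given bound (Northcott's theorem), its infimum $\del$ is attained. Thus there is $\alpha\in\Fbar$ with $F(\alpha)=K$ and $H_{F(\alpha)}(\alpha)=H_K(\alpha)=\del\leq\DK^\gamma\leq X^\gamma$. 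Such an $\alpha$ lies in $P_\SSS$, and the map sending $K$ to a choice of such $\alpha$ is injective since $F(\alpha)$ determines $K$. Hence the number of such fields is at most $N_H(P_\SSS,X^\gamma)\ll_{\SSS,\lambda}X^{\lambda\gamma}$. (If $\gamma=0$ the proposition asserts nothing, so henceforth $\gamma>0$, and we may assume $\varepsilon$ is small enough that $\gamma/\ell-2\varepsilon>0$.)

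\emph{Steps 2 and 3: feeding this into Proposition \ref{mainprop}.} We are in the situation of \S\ref{AppCheb}, so \eqref{Scounting1} and \eqref{Scounting2} hold and, as in the proof of Proposition \ref{mainprop1}, $\card\B_\SSS(X;Y,M)\leq E(\A;Y,M)$. Hence \eqref{Eestimate} shows that for every $\delta_0\in(0,\tilde{\delta}_0]$ the increasing function $E_{\delta_0,\varepsilon}(X):=c(\SSS,\delta_0,\E,\varepsilon)\,X^{\ro-\delta_0+\varepsilon}$ (with $\ro-\delta_0+\varepsilon>0$ since $\delta_0\leq\tilde{\delta}_0\leq\ro$) majorises $\card\B_\SSS(X;X^{\delta_0},X^{\delta_0-\varepsilon})$ for $X$ large. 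Now apply Proposition \ref{mainprop} with this $\gamma$, with $\theta:=\lambda\gamma$ (legitimate by Step 1), with $\tilde{\delta}_0$ as in \eqref{defdeltatilde}, and with $\delta_0:=\min\{\gamma/\ell-2\varepsilon,\tilde{\delta}_0\}$ and $E_{\delta_0,\varepsilon}$ as above: it gives
\[
\#Cl_K[\ell]\ll_{\SSS,\ell,\gamma,\varepsilon}\DK^{1/2-\delta_0+2\varepsilon}
\]
for all but $O_{\SSS,\tilde{\delta}_0,\gamma,\lambda,\E,\varepsilon}((\log X)X^{\ro-\delta_0+\varepsilon}+X^{\lambda\gamma})$ fields $K\in\SSS$ with $\DK\leq X$. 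One checks from the definition of $\delta_0$ that $\delta_0\geq\min\{\gamma/\ell,\tilde{\delta}_0\}-2\varepsilon$; absorbing $\log X$ into an extra $X^\varepsilon$, substituting this inequality in both the pointwise exponent and the exceptional count, and performing a harmless rescaling of $\varepsilon$, yields the statement of the proposition.

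\emph{Main obstacle.} There is essentially no analytic or arithmetic difficulty: Steps 2 and 3 merely repackage \eqref{Eestimate} and Proposition \ref{mainprop}, and the substance is the simple but crucial observation of Step 1. The one point deserving care is that $\del$ is defined as an infimum rather than a minimum; Northcott's theorem removes this, and in any case one could bypass it by working with generators of height at most $2\DK^\gamma$. The real work in applying this proposition to concrete families $\SSS$ lies elsewhere, namely in establishing the required upper bound $N_H(P_\SSS,X)\ll X^\lambda$ with a good exponent $\lambda$.
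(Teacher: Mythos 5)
Your proposal is correct and follows essentially the same route as the paper: the paper's proof likewise bounds $\#\{K\in\SSS;\,\DK\leq X,\,\del\leq\DK^\gamma\}$ by $N_H(P_\SSS,X^\gamma)\ll_{\SSS,\lambda}X^{\lambda\gamma}$ via the map $\alpha\mapsto F(\alpha)$, and then applies Proposition \ref{mainprop} exactly as in the proof of Proposition \ref{mainprop1}, but with $\theta=\lambda\gamma$. Your extra care about the infimum in the definition of $\del$ being attained (Northcott) and the bookkeeping with $\varepsilon$ only makes explicit what the paper leaves implicit.
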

\begin{proof}
Observe that the image of the map $\alpha\rightarrow F(\alpha)$ with domain 
$$\{\alpha\in P_\SSS ;H_{F(\alpha)}(\alpha)\leq X^\gamma\}$$ 
covers the set
$$\{K\in \SSS;\DK\leq X, \del\leq \DK^\gamma\}.$$ 
Using the hypothesis we conclude that
\begin{alignat}1\label{countingargument}
\#\{K\in \SSS;\DK\leq X, \del\leq \DK^\gamma\}\leq N_H(P_\SSS, X^\gamma)\ll_{\SSS,\lambda}X^{\lambda\gamma}.
\end{alignat}
As in the proof of Proposition \ref{mainprop1} we apply 
Proposition \ref{mainprop} with $\tilde{\delta}_0$ as defined in (\ref{defdeltatilde}), but this time with
$\theta=\lambda\gamma$.
This completes the proof of Proposition \ref{impGRH}.
\end{proof}

Let us now consider the special case $\SSS=\SSS_{F,n}$. Note that for $F=\IQ$ the cardinality of the set of algebraic numbers of degree $n$ over $F$ with height at most $X$ is bounded
from above by $n$ times the number of (irreducible) degree $n$ polynomials in $\IZ[x]$ of Mahler measure  at most $X$.
The Mahler measure in turn is bounded from below by $2^{-n}$ times the maximum norm of the coefficient vector.
This shows that for $F=\IQ$
\begin{alignat*}1
N_H(P_\SSS, X)\ll_n X^{n+1}.
\end{alignat*}
For arbitrary ground fields $F$ a similar argument applies (see \cite[Theorem]{22}) and provides 
\begin{alignat}1\label{degnumbers}
N_H(P_\SSS, X)\ll_{n,m}X^{n+1}.
\end{alignat}
Of course, this bound also holds for any $\SSS\subseteq \SSS_{F,n}$.
Applying  Proposition \ref{impGRH} with this bound proves the following theorem. 

\begin{theorem}\label{thm:impGRH}
Suppose $F$ is a number field and $\SSS\subseteq \SSS_{F,n}$ such that  (\ref{IntroScounting1}) and (\ref{IntroScounting2}) do hold.
Let $\varepsilon>0$ and $0\leq \gamma<1/(n+1)$. Then for all but 
\begin{equation*}
O_{\SSS,\tilde{\delta}_0,\gamma,\E,\varepsilon}(X^{1-\min\{\frac{\gamma}{\ell},\tilde{\delta}_0\}+\varepsilon}+X^{\gamma(n+1)}) 
\end{equation*}
fields $K$ in  $\SSS$ with $\DK\leq X$, we have 
\begin{alignat*}1
\#Cl_K[\ell] \ll_{\SSS,\ell,\gamma,\varepsilon} \DK^{\frac{1}{2}-\min\{\frac{\gamma}{\ell},\tilde{\delta}_0\}+\varepsilon}.
\end{alignat*}
\end{theorem}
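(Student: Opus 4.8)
The plan is to obtain Theorem \ref{thm:impGRH} as an immediate instance of Proposition \ref{impGRH}, feeding in the universal bound \eqref{degnumbers} for the number of generators of small height. The only thing to supply is the value of the exponent $\lambda$ in the hypothesis of Proposition \ref{impGRH}, and this is exactly what \eqref{degnumbers} provides.

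First I would check that \eqref{degnumbers} is applicable to the set $P_\SSS=\{\alpha\in\Fbar: F(\alpha)\in\SSS\}$. Since $\SSS\subseteq\SSS_{F,n}$, every $\alpha\in P_\SSS$ is algebraic of degree exactly $n$ over $F$, so the counting bound of \cite{22} (established in the excerpt via Mahler-measure estimates for $F=\QQ$ and its analogue over a general ground field) gives $N_H(P_\SSS,X)\ll_{n,m}X^{n+1}$, i.e. we may take $\lambda=n+1$.

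Next I would invoke Proposition \ref{impGRH} with this $\lambda$, with $\ro=1$ coming from hypothesis \eqref{IntroScounting1}, and with $\tilde{\delta}_0$ as in \eqref{defdeltatilde} — legitimate, because the hypotheses \eqref{IntroScounting1}, \eqref{IntroScounting2} are precisely \eqref{Scounting1}, \eqref{Scounting2} with a multiplicative $\delta_\ef$ of the required shape. The proposition then yields $\#Cl_K[\ell]\ll_{\SSS,\ell,\gamma,\varepsilon}\DK^{1/2-\min\{\gamma/\ell,\tilde{\delta}_0\}+\varepsilon}$ for all but $O(X^{1-\min\{\gamma/\ell,\tilde{\delta}_0\}+\varepsilon}+X^{(n+1)\gamma})$ fields $K\in\SSS$ with $\DK\leq X$, which is exactly the asserted statement. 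The hypothesis $\gamma<1/(n+1)$ is what renders this non-vacuous: it forces $X^{(n+1)\gamma}=o(X)$, so that the exceptional set has density zero among the $\asymp X$ fields of $\SSS$ of bounded discriminant.

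As for the main obstacle: at the level of this theorem there is essentially none, since all the work has been pushed into Proposition \ref{impGRH} (which in turn rests on Proposition \ref{mainprop}, the Ellenberg--Venkatesh key lemma Proposition \ref{keylemma}, and the Chebyshev-sieve input \eqref{Eestimate}). The conceptual point, carried out there and borrowed from \cite{ltor}, is that a field $K$ with small invariant $\del$ must have a generator of small height, so the number of $K\in\SSS$ with $\DK\leq X$ and $\del\leq\DK^\gamma$ is at most $N_H(P_\SSS,X^\gamma)\ll X^{(n+1)\gamma}$; outside this thin set one may use the improved lower bound $\del>\DK^\gamma$ in Proposition \ref{keylemma} in place of Silverman's $\DK^{1/(2(n-1))}$, thereby beating the GRH-bound once $\gamma$ is allowed close to $1/(n+1)$. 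The present theorem is just the instantiation of this mechanism for an arbitrary $\SSS\subseteq\SSS_{F,n}$ satisfying \eqref{IntroScounting1}, \eqref{IntroScounting2}.
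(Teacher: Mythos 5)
Your proposal is correct and follows essentially the same route as the paper: the authors also deduce Theorem \ref{thm:impGRH} directly from Proposition \ref{impGRH} by taking $\lambda=n+1$ from the height-counting bound (\ref{degnumbers}) (valid for any $\SSS\subseteq\SSS_{F,n}$), with $\ro=1$ and $\tilde{\delta}_0$ as in (\ref{defdeltatilde}). Nothing is missing.
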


Assuming the hypotheses of Theorem \ref{thm:impGRH} and taking $\gamma$ close enough to $1/(n+1)$ shows that for $\ell>1/(\tilde{\delta}_0(n+1))$, the bound  
(\ref{impGRHbound}) holds true for $100\%$ of $K\in \SSS$, counted by discriminant, thus proving Theorem \ref{introthm:impGRH}.

To get an improvement in Theorem \ref{Thm3} we would need that with $\SSS=\totram$
\begin{alignat}1\label{cyclicnumber}
N_H(P_\SSS,X)\ll_{\SSS}X^{\lambda}
\end{alignat}
for some $\lambda<2$. 
However, by Schanuel's Theorem \cite{25} even the contribution from a single field $K\in \SSS$ is already $\gg_{K}X^2$,
so that (\ref{cyclicnumber}) with $\lambda<2$ cannot be true.

\subsection{Further remarks}\ 

Finally, for $e\mid n$ let $\SSS_{F,n}(e)\subseteq \SSS_{F,n}$  be the subfamily of fields $K$ that contain a field $F\subseteq L\subseteq K$ of degree $[L:F]=e$.
For $\SSS=\SSS_{\IQ,n}(e)$ it follows immediately from \cite[Theorem 1.1]{art4} that
\begin{alignat}1\label{subfieldnumbers}
N_H(P_\SSS, X)\ll_{e,n}X^{e+n/e}.
\end{alignat}

Applying Proposition \ref{impGRH} with $\SSS=\SSS_{\IQ,n}(e)$, and using the above bound (\ref{subfieldnumbers})
proves that if  (\ref{IntroScounting1}) and (\ref{IntroScounting2}) do hold for $\SSS=\SSS_{\IQ,n}(e)$, then we have 

\begin{alignat*}1
\#Cl_K[\ell] \ll_{e,n,\ell,\varepsilon} \DK^{\frac{1}{2}-\min\{\frac{1}{\ell(e+n/e)},\tilde{\delta}_0\}+\varepsilon}
\end{alignat*}
for $100\%$ of all fields $K$ in  $\SSS_{\IQ,n}(e)$.
Particularly interesting is the case $n=e^2$. In this case we would get for all sufficiently large $\ell$  and $100\%$ of the fields $K\in \SSS_{F,n}(\sqrt{n})$ an improvement over the trivial
exponent by $1/(2\ell\sqrt{n})$ whereas in all other known cases of families the improvement decays like $O(1/n)$ as $n$ gets large.

Unfortunately, the required multiplicativity of
$\delta_{\mathfrak{e}}$ may not hold for the family $\SSS_{\IQ,n}(e)$,
no matter how we choose $\E$. However, note that if $\SSS$ is a
subfamily of $\SSS_{\IQ,n}(e)$ with linear growth rate, and we can
guarantee the existence of sufficiently many small splitting primes
for $100\%$ of all $K\in \SSS$ then Proposition \ref{keylemma}
combined with (\ref{countingargument}) and (\ref{subfieldnumbers})
provide an improvement to the GRH-bound for $100\%$ of all
$K\in \SSS$. For example, if we assume GRH then we have an improvement
over the trivial exponent by $1/(\ell(e+n/e))$ instead of just
$1/(2\ell(n-1))$.

%\section*{Acknowledgments}

\bibliographystyle{amsplain}
\bibliography{literature}

\end{document}